\newtheorem{remark}[theorem]{Remark}
\def\e{\mathrm{e}}
\def\irm{\mathrm{i}}
\def\d{\mathrm{d}} 
\def\ddt{\frac{\d}{\d t}}
\def\Df{\textsc{D}}
\def\Mx{\mathbf{M}_{X}}
\def\Mxa{\mathbf{M}_{X}^{-1}}
\def\gM{\mathrm{g}_{\mathbf{M}_{X}}}
\def\sym{\mathrm{sym}}
\def\skew{\mathrm{skew}}
\def\tr{\mathrm{tr}}
\def\diag{\mathrm{diag}}
\def\cay{\mathrm{cay}}
\def\gradm{\mathrm{grad}_{\mathbf{M}_{X}}}
\def\St{\mathrm{St}}
\def\iSt{\mathrm{iSt}}
\def\iStkn{\mathrm{iSt}(k,n)}
\def\Cbn{{\mathbb{C}^n}}
\def\Cb{{\mathbb{C}}}
\def\Rbkk{{\mathbb{R}^{k\times k}}}
\def\Rbnk{{\mathbb{R}^{n\times k}}}
\def\Rbnn{{\mathbb{R}^{n\times n}}}
\def\Rbln{{\mathbb{R}^{l\times n}}}
\def\R{\mathbb{R}}
\def\RX{\mathcal{R}}
\def\RZ{\mathcal{R}_X}
\def\RZi{\mathcal{R}_{X_j}}
\def\PX{\mathcal{P}_{X}}
\def\SPD{\mathrm{SPD}}
\def\O{\mathrm{O}}
\def\diag{\mathrm{diag}}
\def\calF{\mathcal{F}}
\newcommand{\skewset}{\mathrm{Skew}}
\newcommand{\symset}{\mathrm{Sym}}
\newcommand{\abs}[1]{\left|#1\right|}
\newcommand\restr[2]{\ensuremath{\left.#1\right|_{#2}}}
\def\restrict#1{\raise-.5ex\hbox{\ensuremath|}_{#1}}
\title{A Riemannian gradient descent  method\\for optimization on the indefinite Stiefel manifold %\thanks{Version:  \currenttime, \today;}
}
\author{Dinh Van Tiep\thanks{%Faculty of International Training
		Faculty of Fundamental and Applied Sciences, %Sciences and Applications, 
		Thai Nguyen University of Technology, 24131 Thai Nguyen, Vietnam (tiep.dv@tnut.edu.vn).}
	\and 
	Nguyen Thanh Son\thanks{Department of Mathematics and Informatics, Thai Nguyen University of Sciences, 24118 Thai Nguyen, Vietnam (ntson@tnus.edu.vn).}
}
\begin{document}
	\maketitle
	
%	\footnotetext{\today \hspace*{0.3cm}   \currenttime}
	\begin{abstract} 	
		We consider  
		the optimization problem with a generally quadratic matrix constraint of the form $X^TAX = J$, where $A$ is a given nonsingular, symmetric $n\times n$ matrix and $J$ is a given $k\times k$ symmetric matrix, with $k\leq n$, satisfying $J^2 = I_k$. Since the feasible set constitutes a differentiable manifold, called the indefinite Stiefel manifold, we approach this problem within the framework of Riemannian optimization. Namely, we first equip the manifold with a Riemannian metric and construct the associated geometric structure,  then propose a retraction based on the Cayley transform, and finally suggest a Riemannian gradient descent method 
		using the attained materials, whose global convergence is guaranteed. Our results not only cover the known cases, the orthogonal and generalized Stiefel manifolds, 
		but also provide a Riemannian optimization solution for other constrained problems which has not been investigated. As applications, we consider, via trace minimization, several eigenvalue problems of symmetric positive definite matrix pencils, including the linear response eigenvalue problem, 
		and a matrix least square problem, a general framework for the Procrustes problem and constrained matrix equations. The %obtained 
		presented numerical results justify the theoretical findings. % are presented to justify the theoretical results.
	\end{abstract}
	
	\begin{keywords} 
		Indefinite Stiefel manifold, Riemannian gradient descent, tractable metric, Cayley retraction, trace minimization, eigenvalue problem, matrix least square 
	\end{keywords}
	
	\begin{AMS}
		65K05, 70G45, 90C48, 15A18
	\end{AMS}
	
	\pagestyle{myheadings}
	\thispagestyle{plain}
	
	\markboth{DINH VAN TIEP AND NGUYEN THANH SON}{RGD METHOD FOR OPTIMIZATION ON THE INDEFINITE STIEFEL MANIFOLD}
	
	%\noindent
	%\nts{Son's changes  are in blue} and \ntscomm{Son's comments are in cyan.}

	%\dvt{Tiep's changes  are in purple} and
	 %\chg{final changes are in red.}
	
	\section{Introduction}\label{Sec:Intro}
	
	%\ntscomm{Things to do: %(1) change all transpose by $^\top$ to $^T$; (2) Abbreviations of journal names must be standardized according to the most updated Math Review; %(3) carfully checking the appropriaty of references cited, 
		%(4) carefully checking English usages, consistence, typos, etc.; (5) check formula breakages; (6) use refcheck to remove unemployed formula numbers}
	
	Optimization of smooth functions of variable $X\in\Rbnk$ with $k\leq n$ under the condition %within the set of the form
	\begin{equation}\label{eq:gen_constraint}
		X^TAX = B,
		%\calF := \{X\in\Rbnk\ :\ X^TAX = B\},
	\end{equation}
	in which $A\in\Rbnn, B \in \Rbkk$, possibly with some conditions imposed, 
	%with orthogonality constraint 
	is popular thanks to its wide range of applications. In many circumstances %, 
	when the feasible set, $$\calF := \{X\in\Rbnk\ :\ X^TAX = B\},$$ constitutes a differentiable manifold, various algorithms were developed by extending popular methods for unconstrained optimization in the Euclidean space to the Riemannian manifold once necessary geometric tools such as the tangent space, metric,  projection, and retraction are efficiently realized. Such an approach is generally referred to as Riemannian optimization.
	
	The most well known situation is when $A = I_n$, the $n\times n$ identity matrix, and $B=I_k$, where $\calF$ is named as the \emph{(orthogonal) Stiefel manifold}. Numerous practical issues can be (re)formulated as an optimization problem on the Stiefel manifold, i.e., with the orthogonality constraint. %\eqref{eq:gen_constraint}. 
	For instance, in dimensionality reduction of data, principal component analysis (PCA) %--a pre-processing step for many machine learning methods%\dvt{such as}--
	requires solving
	\begin{equation}\label{eq:PCAProb}
		\max_{X^T X = I_k}\tr\left(X^T\bar{S}^T\bar{S}X\right),
	\end{equation}
	where $\bar{S}$ is the centered data set, i.e., each of its columns is of the form $\bar{s}_j = s_j - \mu \in \R^n$ with $\mu = \frac{1}{m}\sum_{j=1}^ms_j$. Ky-Fan's theorem is a similar situation in numerical linear algebra which states that given a symmetric matrix $M$ whose eigenvalues are denoted by $\lambda_1 \leq \ldots \leq \lambda_n$, %then
	we have
	\begin{equation}\label{eq:KyFan}
		\min_{X^TX = I_k}\tr\left(X^TMX\right) = \sum_{j=1}^k\lambda_j, \mbox{ for each } k = 1,\ldots,n.
	\end{equation}
	Riemannian optimization is among the methods of choice for solving \eqref{eq:PCAProb} and \eqref{eq:KyFan}. %\ntscomm{citations needed.} 
	The reader is referred to the classic work \cite{EdelAS98} and excellent monographs \cite{Udri1994,AbsiMS08,Sato2021,Boumal23} for details. %ready-to-use theories and various applications. 
	
	A slightly different context is when $A\in \SPD(n),$ the set of $n\times n$ symmetric positive-definite (spd) matrices, and $B = I_k$ occurring in the study of locality preserving projections where one wants to find a projection matrix that preserves some affinity graph from a given data set; it leads to the optimization problem
	%\begin{equation}\label{eq:LPP}
	%	\min_{\begin{array}{cc}{\scriptstyle
				%				X^T(\hat{S}^T\hat{S})X = I_k,}\\
			%			{\scriptstyle X\in \Rbnk}
			%	\end{array}}\tr\left(X^T\hat{S}^T(I-\hat{W})\hat{S}X\right),
	%\end{equation}
	\begin{equation}\label{eq:LPP}
		\min_{X^T(\hat{S}^T\hat{S})X = I_k}
		\tr\left(X^T\hat{S}^T(I-\hat{W})\hat{S}X\right),
	\end{equation}
	where $\hat{S}$ is generally of full rank and $\hat{W}$ is a modified weighted matrix  \cite{HeN03}. A similar problem arises in canonical correlation analysis  whose feasible set is the Cartesian product of two sets %set 
	as that of \eqref{eq:LPP} %\dvt{\cite{HardSST04,Hote92}.}
	\cite{Hote92,HardSST04}. 
	Meanwhile, an extension of Ky-Fan's theorem \cite{SameW82} for %the eigenvalues $\lambda_1 \leq \ldots \leq \lambda_n$ of 
	the symmetric matrix pencil $(M,A)$, where $A$ is spd, also leads to 
	\begin{equation}\label{eq:KyFan2}
		\min_{X^{T}AX = I_k}\tr\left(X^{T}MX\right) = \sum_{j=1}^k\lambda_j,%\ 
		\mbox{ for each }k = 1,\ldots,n,
	\end{equation}
in which $\lambda_j, j = 1,\ldots,n,$ are the (generalized) eigenvalues of the matrix pencil $(M,A)$. Solutions to the optimization problems with constraints akin to \eqref{eq:LPP} and \eqref{eq:KyFan2} motivate the investigation of Riemannian optimization methods on the \emph{generalized Stiefel manifold}, denoted by $\St_A(k,n)$, \cite{YgerBGR12,manopt,MishS2016,SatoA19,ShusA2023,WangDPY2024}.
	%\dvt{\cite{manopt,MishS2016,SatoA19,ShusA2023,WangDPY2024,YgerBGR12}.} 
	These results provide us with tools to work directly on $\St_A(k,n)$ without having to turn back to $\St(k,n)$ via a Cholesky factorization of $A$ which should be avoided, especially for large and/or sparse $A$. More problems on minimization of the trace can be found in \cite{KokiS11}.
	
	Other authors 
	%Recent works
	\cite{GSAS21,GSAS21a,BendZ21,OviH23,YamaS2023,GSS24,JenZ24,GaoSS2024} 
	%\dvt{\cite{BendZ21,GaoSS2024,GSAS21a,GSAS21,GSS24,JenZ24,OviH23,YamaS2023}}
	concentrated on developing optimization methods on the \emph{symplectic Stiefel manifold}, which corresponds to the case where both $A$ and $B$ are the Poisson matrices, i.e., of the form $\left[\begin{smallmatrix}
		0&I_p\\-I_p&0
	\end{smallmatrix}\right]$. This problem arises in model reduction of Hamiltonian systems \cite{PengM16,BendZ22,BuchGH22,GSS24}, 
	%\dvt{\cite{BendZ22,BuchGH22,GSS24,PengM16}}, 
	computing the symplectic eigenvalues of (semi)spd matrices \cite{BhatJ15,SonAGS21,SonS22}. 
	
	In \cite{KovaV95}, Ky-Fan's theorem was further investigated for $n\times n$ symmetric positive-definite matrix pencils $M-\lambda A$, i.e., the ones such that %there exists a $\lambda_0$ such that 
	the matrix $M-\lambda_0 A$ 
	%$M-\lambda A$ 
	is spd for some %\dvt{$\lambda.$}
	 $\lambda_0$. 
	In this extension,  $A$ is nonsingular and may be indefinite, i.e., $A$ has $p$ positive and $m$ negative eigenvalues, $p>0, m > 0, p+m = n$. Namely, let $J = \diag(I_{k_p},-I_{k_m}),$ the block diagonal matrix, %in which two blocks $I_{p_1}$ and $-I_{q_1}$ are arranged along the diagonal,} 
with $0\leq k_p \leq p, 0\leq k_m \leq m, k_p + k_m = k$, and denote the eigenvalues of  $(M,A)$ by 
	$$
	\lambda^-_m\leq \ldots\leq \lambda^-_1 %< 0 < 
	<\lambda^+_1 \leq \ldots \leq \lambda^+_p,
	$$
	where the minus/plus sign means the negative/positive value. Then we have
	\begin{equation}\label{eq:KovacStrikoVeselic}
		\min_{X^TAX = J}\tr\left(X^TMX\right) = \sum_{j=1}^{k_p}\lambda^+_j - \sum_{j=1}^{k_m}\lambda^-_j.% ,\ \mbox{ for each }\ k = 1,\ldots,n.
	\end{equation}
	Once a minimizer $X_*$ is obtained, one can determine the eigenvalues appearing on the right hand side of \eqref{eq:KovacStrikoVeselic} and their associated normalized eigenvectors, i.e., an $n\times k$ matrix $\hat{X}$ satisfying the constraint $\hat{X}^TA\hat{X}=J$ and its $j$-th column $\hat{x}_j$ verifies $M\hat{x}_j = \lambda_jA\hat{x}_j$ and $\lambda_j = \lambda_j^+$ for $j=1,\ldots,k_p$ and $\lambda_{j+k_p} = \lambda_j^-$ for $j=1,\ldots,k_m$, by solving two standard eigenvalue problems for the matrices of sizes $k_p\times k_p$ and $k_m\times k_m$ extracted from $X_*^TMX_*$, see \cite[Thm. 3.5]{KovaV95} and section~\ref{sec:NumerExam} for details. The need for computing several smallest positive eigenvalues of symmetric/Hermitian matrices/matrix pencils arises in practice, for instance, in the design of electromagnetic accelerators, see \cite{Geus2002} and references therein. In the case that $A=J=\diag(\pm 1,\ldots,\pm 1),$ the signature matrix, the problem of determining the eigenvalues and eigenvectors mentioned above is referred to as the hyperbolic eigenvalue problem \cite{SlapT2000}. A solution to this problem is a main step for hyperbolic PCA which is arguably more informative than the conventional PCA \cite{TabaKWM2024} and the computation of hyperbolic singular value decomposition \cite{OnnSB1989}. %Our approach is based on Riemanni
	%The infimum is attained if there is a matrix $V$ of eigenvectors of $(M,A)$ associated with the eigenvalues  $\lambda_1^-,\ldots,\lambda_{q_1}^-,\lambda_1^+,\ldots,\lambda_{p_1}^+$ satisfying the constraint in \eqref{eq:KovacStrikoVeselic}. 
	%As a special case, the minimization problem
	%\begin{equation}\label{eq:Pala}
	%\min_{\theta^{T}A\theta = \gamma}\theta^{T}M\theta,
	%\end{equation}
	%where $M$ is symmetric positive semi-definite, $A$ is symmetric with at least one positive eigenvalue, $\gamma >0$ and $\theta$ is a vector of suitable size. \ntscomm{cite needed} 
	
	Another issue is the linear response eigenvalue problem (LREVP) where %ones 
	one has to compute a few smallest eigenvalues of the matrix $\left[ \begin{smallmatrix}
		0&K\\M&0
	\end{smallmatrix}\right]$ %where 
	with symmetric  positive-semidefinite matrices $K, M$ in which at least one of them is spd.
	%\begin{equation}\label{eq:linresp}
	%\begin{bmatrix}
	%0&K\\M&0
	%\end{bmatrix}
	%%\begin{bmatrix}
	%%	x\\y
	%%\end{bmatrix} = 
	%-\lambda\begin{bmatrix}
		%x\\y
		%\end{bmatrix}.
		%\end{equation}
		This problem %\dvt{is originated} 
		originates 
		from the studies of density functions, random phase approximation, see \cite{BaiL12} and references therein. Various numerical methods have been proposed, see \cite{BaiL2017} for a recent survey, among which the ones based on trace minimization \cite{BaiL13,BaiL14,ChenSL2023} are of our interest. It is direct to see  that the mentioned LREVP is equivalent to the eigenvalue problem for the matrix pencil 
		\begin{equation}\label{eq:HGpencil}
			H - \lambda G := \begin{bmatrix}
				K&0\\0&M
			\end{bmatrix} - \lambda \begin{bmatrix}
				0&I\\I&0
			\end{bmatrix}.
		\end{equation} 
		When both $K$ and $M$ are spd, \eqref{eq:HGpencil} is equivalent to the product eigenvalue problem where one has to find the $k$ smallest eigenvalues of a product matrix $KM$  %where $K, M \in \SPD(n)$
		 \cite{KresPS2014,Pand2019}. Indeed, %\cite{KresPS2014,Pand2019}  %see \cite{KresPS2014,Pand2019} and references therein. 
		it can be verified that %$\lambda^2$ 
		$\lambda^2, \lambda\in \R,$ 
		is an eigenvalue of $KM$ %for $\lambda\in\R$ 
		if and only if $\pm\lambda$ are the eigenvalues of the matrix pencil \eqref{eq:HGpencil}.
		%\begin{equation}
		%	H-\lambda G := \begin{bmatrix}
			%		K&0\\0&M
			%	\end{bmatrix} -\lambda\begin{bmatrix}
			%		0&I_n\\I_n&0
			%	\end{bmatrix}. \label{eq:HG_def}
		%\end{equation}
		The statement \eqref{eq:KovacStrikoVeselic} suggests that the LREVP %  can
		can be solved via finding a solution  $X\in \R^{2n\times k}$ to
		%	\min_{\begin{array}{cc}{\scriptstyle
					%				X^T(\ha.t{S}^T\hat{S})X = I_k,}\\
				%			{\scriptstyle X\in \Rbnk}
				%	\end{array}}
		\begin{equation}\label{eq:IndefTraceMin}
			\min_{X^TGX = J }\tr\left(X^THX\right)
		\end{equation}
		with $H$ and $G$ defined as in \eqref{eq:HGpencil} and $J = I_k$.%$J:= \diag(I_k,-I_k)$.

		Obviously, the existing Riemannian optimization methods on the generalized Stiefel manifold and the symplectic Stiefel manifold do not help %\dvt{to} %solving 
		solve the optimization problems mentioned in \eqref{eq:KovacStrikoVeselic} and \eqref{eq:IndefTraceMin}. The main reason %\dvt{is due to the difference in the feasible sets.} 
		lies in the difference in %\dvt{of} %in
		the feasible sets. This fact inspires us %considering
		to consider
		%Inspired by recent works which approached the eigenvalue problems via using Riemannian methods on the underlying manifold involved in the trace optimization \cite{SonAGS21,SonS22} \ntscomm{cite more here}, we consider 
		the optimization problem 
		\begin{equation}\label{eq:opt_prob}
			\min_{X\in \R^{n\times k}} f(X)\ \mbox{ s.t. }\ X^{T}AX = J,
		\end{equation}
		where $f$ is continuously differentiable, $A\in \R^{n\times n}$ is symmetric, nonsingular and %$J\in \R^{k\times k}$ 
		\mbox{$J\in\symset(k)$} satisfying $J^2 = I_k$. As will be shown in section~\ref{sec:Geometry}, the feasible set of this problem %\eqref{eq:opt_prob}
		\begin{equation}\label{eq:indefStiefel}
			\iSt_{A,J}(k,n) = \{X\in \R^{n\times k}\ :\ X^{T}AX =  J\}
		\end{equation}
		constitutes a differentiable manifold. We will refer to it as the \emph{indefinite Stiefel manifold} to emphasize the possible indefiniteness of $A$ and $J$\footnote{There is a mixed use of terms for \textit{``Stiefel-type"} manifolds. The set   $\iSt_{A,I_k}$, which is a special case of $\iSt_{A,J}$ in %\eqref{eq:gen_constraint}, 
		\eqref{eq:indefStiefel}, was named the \emph{indefinite Stiefel manifold} in \cite{Kobayashi1992}, \emph{pseudo-Stiefel manifold} in \cite[Remark 1.3.2]{Chikuse2003} and \emph{generalized Stiefel manifold} in \cite{Li1993,SenadoMendoza2022}. At the same time, as mentioned previously, the \emph{generalized Stiefel manifold} means $\iSt_{A,I_k}$  with $A$ is spd in the optimization community and in this work. The set $\{X\in \Rbnk :\ X^TX = C\}$, where $C$ is spd, was called %coined as 
		the \emph{Stiefel C-manifold} in \cite{Downs1972}.}. %One might think that the conditions for $J$, namely a symmetric square root of the identity matrix, are strict but even with $k = 2$, there are already infinitely many $J$, see, e.g., \cite{Mitchell2003}. 
	If the two matrices $A,J$ or the size $n\times k$ of the ambient space are either clear from the context and/or inessential, we also write $\iSt(k,n)$ or $\iSt_{A,J}$. %A quick observation is that the condition for $J$ restricts it to the form of $U\diag(\pm 1,\ldots,\pm 1)U^T$, where $U$ is any $k\times k$ orthogonal matrix. 
	This manifold covers several sets considered in the literature. If $A$ is additionally positive-definite and $J = I_k$, the constraint set in our problem is a generalized Stiefel manifold which naturally reduces to the Stiefel manifold if $A=I_n$. For $A = \diag(-1,1,\ldots,1)$, if $J = I_k$, we obtain the hyperbolic manifold mentioned in %\dvt{\cite{HanJM2024,XiaoLT2024};}
		\cite{XiaoLT2024,HanJM2024}; 
		if $J = -1$, the Lorentz model or the hyperboloid model of hyperbolic space, see, e.g., \cite{NickK2018,WilsL2018}, is a half of $\iSt_{A,J}$. Moreover, if $A = J = \diag(\pm 1,\ldots,\pm 1)$, we are having the $J$-orthogonal group \cite{High03,HeYWX2024}.
		
		Despite the similarity, there are essential differences between the indefinite Stiefel manifold and other Stiefel-type manifolds. % \dvtcomm{Need to clearify Stiefel-type manifold?} 
		The generalized Stiefel manifold, including the Stiefel manifold as a special case, is compact while the indefinite Stiefel manifold is generally not so. This can be easily seen by considering the hyperboloid model $x_1^2 - x_2^2 = 1$ in $\R^{2\times 1}$. Moreover, $\iSt_{A,J}$ in this paper is defined generally, both $J$ and $A$ are symmetric (with appropriate restrictions), and therefore completely different from the symplectic Stiefel manifold defined in \cite{GSAS21} which is attached to two fixed (skew-symmetric) Poisson matrices $J_{2n}$ and $J_{2k}$. 
		
		%\paragraph{Terminology} There is a mixed use of terms for Stiefel typed manifolds. The set   $\iSt_{A,I_k}$, which is a special case of $\iSt_{A,J}$ in \eqref{eq:gen_constraint}, was  named as the \emph{indefinite Stiefel manifold} in \cite{Kobayashi1992}, \emph{pseudo-Stiefel manifold} in \cite[Remark 1.3.2]{Chikuse2003} and \emph{generalized Stiefel manifold} in \cite{Li1993,SenadoMendoza2022}. At the same time, as mentioned previously, \emph{generalized Stiefel manifold} means $\iSt_{A,I_k}$  with $A$ is spd in the optimization community and in this work. The set $\{X\in \Rbnk\ :\ X^TX = C\}$, where $C$ is spd, was coined as the \emph{Stiefel C-manifold} in \cite{Downs1972}. %also mentioned  which was used in \cite{GaoVAA22} to interpret a langding algorithm for optimization on the Stiefel manifold. 
		%\emph{generalized Stiefel manifold} are argueably most frequently used for the set ... in [cite here recently papers in optimization community], and the set ... in \cite{Li1993}. When $J=I$, M is called , 

		\paragraph{Contribution and outline} To the best of our knowledge, our present work is the first attempt to approach the optimization problem with possibly indefinite quadratic constraint $X^TAX = J$ in the Riemannian framework. Namely, in section~\ref{sec:Geometry} we show  that $\iSt_{A,J}$ is an embedded submanifold of $\Rbnk$ %of dimension $nk-\frac{1}{2}k(k+1)$ 
		and describe its tangent vectors. We then equip the manifold with a family of tractable metrics, construct the associated orthogonal projections onto the tangent and normal spaces, and compute the Riemannian gradient of a general cost function in section~\ref{sec:Metric}. A retraction based on the Cayley transformation is studied in section~\ref{sec:Retraction} and a gradient descent algorithm is proposed in section~\ref{sec:Algorithm}. The obtained results are then verified by various numerical examples in section~\ref{sec:NumerExam}. %It provides another option to solve such problems.  
		Since the results in this work are not necessarily restricted to the indefinite case, they also hold for the generalized Stiefel manifold; with suitable restrictions, %re\dvt{s}triction(s), 
		some of them reduce to the results derived for generalized Stiefel manifold, see remarks~\ref{rem:Compare_TangentVect}, \ref{rem:Compare_Metric}, and \ref{rem:Compare_Cayley}. Moreover, our result enables a Riemannian approach for optimization with $J$-orthogonality constraint. %In other words, our work can be seen as a unified treatment of the generalized and indefinite Stiefel manifolds or a nontrivial extension of the earlier.
		
		%\paragraph{Related works} \nts{\cite{HeYWX2024}The classic work  \cite{EdelAS98} and ..., just to name a few,  \ntscomm{Check a good recent publication on Stiefel manifold to } were concerned with the Stiefel manifold and the related aspects in the Riemannian optimization and its applications. Meanwhile, the (stochastic) generalized Stiefel manifold was considered in  \cite{SatoA2019,ShusA2023,WangDPY2024,VaryAGA2023}. In addition, we benefit from materials presented \cite{GSAS21,GSAS21a} for constructing the geometric quantities in the symplectic Stiefel manifold as an embedded submanifold and \cite{Ngu23,GaoSS2024} for equipping a (general) tractable metric for an embedded submanifold.}
		
		\paragraph{Notation}\label{ssec:Notation}
		%This section is devoted to introduce the notations which are used during the monograph as well as the basic properties of the set $\iStkn$ which is in fact a manifold. Given positive integers $n$ and $k, k\leq n.$ Consider a nonsingular and symmetric matrix $A$ in $\Rbnn.$ Let $J\in\Rbkk$ be a symmetric matrix satisfying that $J^2=I_{k},$ for $k\leq n.$ We define $\iStkn$ to be the set of all matrices $X\in\Rbnk$ satisfying that $X^{T}AX=J.$ In fact, $\iStkn$ is a closed submanifold of $\Rbnk$. This is stated in the first result of section \ref{sec:Geometry}. Other properties of the manifold $\iStkn$ is presented in section \ref{sec:TangentspaceH}. Consider a continuously differentiable function $f:\iStkn\longrightarrow\mathbb{R}$. We study the algorithm to solve the optimization problem on $\iStkn$:
		%\begin{equation}\label{eq:optprobH}
		%	\min_{X\in\iStkn} f(X).
		%\end{equation}
		First, $\dim(\cdot)$ will denote the dimension of a linear space, $\diag(\cdot,\ldots,\cdot)$ is the (block) diagonal matrix whose  diagonal (block) entries are the listed objects. For any matrix $\Omega$, the notations $\Omega^T, \tr\,\Omega,\, \|\Omega\|_F,$ and $\|\Omega\|$ are respectively its transpose, trace, Frobenius norm, and spectral norm, if not specified otherwise. If $\Omega$ is a square matrix, we denote by $\skew(\Omega) \coloneqq \frac{1}{2}(\Omega-\Omega^{T})$ and $\sym(\Omega) \coloneqq \frac{1}{2}(\Omega + \Omega^{T})$ %,$ 
		its skew-symmetric and symmetric parts, respectively. Meanwhile, $\skewset(p)$,  $\symset(p)$, and $\O(p)$ %and $\SPD(p)$ 
			stand for the set of skew-symmetric, the set of symmetric, and the set of orthogonal matrices of size $p\times p,$ % and symmetric positive definite (spd)$p\times p$ matrices,
			respectively. The Fréchet derivative of a map $F:\mathcal{V}_1\longrightarrow\mathcal{V}_2$ at $X\in\mathcal{V}_1$ between two normed vector spaces $\mathcal{V}_1$ and $\mathcal{V}_2$ is defined to be a linear operator $\Df F(X):\mathcal{V}_1\longrightarrow\mathcal{V}_2$ such that %for any $Z\in\mathcal{V}_1,$ \dvt{it} holds that 
		%\begin{center}
		$F(X+Z) = F(X)+\Df F(X)[Z]+o(\| Z\|),$ for any $Z\in\mathcal{V}_1,$ where $o(\cdot)$ is the little-$o$ notation. Finally, for a smooth mapping $\gamma$ of one variable $t$, $\ddt(\gamma)$ or $\dot{\gamma}(t)$ %the dot on the top are both used for 
			denotes the derivative of $\gamma$ with respect to $t.$
		%\end{center}
		\section{The indefinite Stiefel manifolds}\label{sec:Geometry} Without further restriction on $A$ and $J$, the set $\iStkn$ can be empty. For instance, there is no real number pair $(x_1,x_2)$ such that $x_1^2 + x_2^2 = -1$. The following statement helps avoid working with such a set.
		\begin{lemma}\label{lem:nonempty}
		The set $\iSt_{A,J}(k,n)$ defined in \eqref{eq:indefStiefel} is nonempty if and only if 
		\begin{equation}\label{eq:lem_nonempty}
		\irm_+(J) \leq \irm_+(A) \mbox{ and }  \irm_-(J) \leq \irm_-(A),
	\end{equation} 
where $\irm_+(M)$ and $\irm_-(M)$ denote the number of positive and negative eigenvalues of a real symmetric $M$, respectively.
	\end{lemma}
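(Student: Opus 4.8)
The plan is to reduce the question to a standard diagonal form by congruence and then treat the two directions separately using Sylvester's law of inertia. First, since $A$ is symmetric and nonsingular, I would invoke the congruence normal form: there is a nonsingular $P\in\Rbnn$ with $P^TAP=\diag(I_p,-I_m)$, where $p=\irm_+(A)$, $m=\irm_-(A)$ and $p+m=n$. Likewise, since $J$ is symmetric with $J^2=I_k$, its eigenvalues are $\pm 1$, so there is an orthogonal $R$ with $R^TJR=\diag(I_{k_p},-I_{k_m})$, where $k_p=\irm_+(J)$, $k_m=\irm_-(J)$ and $k_p+k_m=k$. Substituting $Z=P^{-1}XR$ (equivalently $X=PZR^T$), a direct computation using $A=P^{-T}\diag(I_p,-I_m)P^{-1}$ and the orthogonality of $R$ shows that $X^TAX=J$ holds if and only if $Z^T\diag(I_p,-I_m)Z=\diag(I_{k_p},-I_{k_m})$. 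Hence $\iSt_{A,J}(k,n)$ is nonempty exactly when this standard-form equation is solvable, and the whole lemma reduces to analyzing it.

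For the sufficiency direction, assuming \eqref{eq:lem_nonempty}, i.e.\ $k_p\le p$ and $k_m\le m$, I would exhibit an explicit solution: let $Z$ be the $n\times k$ matrix whose first $k_p$ columns are the standard basis vectors $e_1,\dots,e_{k_p}$ and whose remaining $k_m$ columns are $e_{p+1},\dots,e_{p+k_m}$. Because these columns are distinct coordinate vectors, $Z^T\diag(I_p,-I_m)Z$ is diagonal with entries equal to the corresponding signs, giving precisely $\diag(I_{k_p},-I_{k_m})$. The inequalities $k_p\le p$ and $k_m\le m$ are exactly what guarantee that there are enough positive and negative coordinate directions to place all the columns, so the construction is well defined.

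For the necessity direction, suppose $Z^T\diag(I_p,-I_m)Z=\diag(I_{k_p},-I_{k_m})$ has a solution $Z$. Since the right-hand side is nonsingular, $Z$ has full column rank $k$, so $w\mapsto Zw$ is an isomorphism onto the $k$-dimensional subspace $V=\mathrm{range}(Z)\subseteq\R^n$. For $v=Zw\in V$ one has $v^T\diag(I_p,-I_m)v=w^T\diag(I_{k_p},-I_{k_m})w$, so the restriction of the quadratic form of $\diag(I_p,-I_m)$ to $V$ has signature $(k_p,k_m)$. Invoking the variational characterization of inertia, namely that $\irm_+$ of a symmetric matrix is the maximal dimension of a subspace on which its form is positive definite, the existence of a $k_p$-dimensional subspace of $\R^n$ on which this form is positive definite forces $k_p\le\irm_+(\diag(I_p,-I_m))=p$; symmetrically $k_m\le m$. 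This establishes \eqref{eq:lem_nonempty}.

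The main obstacle is the necessity step: since $Z$ is a tall, non-square matrix, one cannot simply appeal to congruence invariance of inertia. The correct device is the subspace (variational) form of Sylvester's law applied to the form restricted to $\mathrm{range}(Z)$, combined with the full-column-rank observation that makes this restriction isometric to the standard form carried by $J$. Once that is in place, both inequalities drop out cleanly.
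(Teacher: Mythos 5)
Your proof is correct. For sufficiency your construction is essentially the paper's: after bringing $A$ and $J$ to the congruence normal forms $\diag(I_p,-I_m)$ and $\diag(I_{k_p},-I_{k_m})$, picking coordinate columns is exactly the explicit realization of the matrix $V$ with $V^TAV=\diag(I_{\irm_+(J)},-I_{\irm_-(J)})$ that the paper asserts, and your $X=PZR^T$ plays the role of the paper's $VU^T$. The real difference is in the necessity direction: the paper simply cites an external result (\cite[Lem.~1.6]{Tian2010}), whereas you prove it from scratch by observing that a solution $Z$ must have full column rank (since $J$ is nonsingular), that the form of $\diag(I_p,-I_m)$ restricted to $\mathrm{range}(Z)$ is isometric to the form of $\diag(I_{k_p},-I_{k_m})$, and then invoking the variational (max-dimensional positive-definite subspace) characterization of inertia to get $k_p\le p$ and $k_m\le m$. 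Your version is longer but self-contained and makes transparent exactly which piece of Sylvester-type theory is being used; the paper's is shorter at the cost of an external dependence. Both arguments are sound.
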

\begin{proof}
The necessity for the nonemptiness of $\iSt_{A,J}(k,n)$ follows directly from \cite[Lem. 1.6]{Tian2010}. Moreover, by assumption, there exists $n\times k$ real matrix $V$ such that $V^TAV = \diag(I_{\irm_+(J)},-I_{\irm_-(J)})$ and $k\times k$ orthogonal matrix $U$ such that $U^TJU = \diag(I_{\irm_+(J)},-I_{\irm_-(J)})$. As a consequence, $\iSt_{A,J}(k,n)$ has the matrix $VU^T$ as one of its element and therefore verifies the sufficiency.
\end{proof}

From now on, it is essential to assume that the condition \eqref{eq:lem_nonempty} is always fulfilled. The following result gives the foundation for $\iStkn.$ %a submanifold of $\Rbnk$. which is called a semi-Stiefel manifold.
		\begin{proposition}\label{theo:PropertiesH}
			The set $\iStkn$ defined in \eqref{eq:indefStiefel} is a closed, embedded submanifold of $\Rbnk$ with the dimension 
			%\begin{equation*}
			$\dim(\iStkn)=nk-\frac{1}{2}k(k+1)$.
			%\end{equation*}
		\end{proposition}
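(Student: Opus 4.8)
The natural approach is to exhibit $\iStkn$ as a regular level set of a smooth map into the symmetric matrices, so that the submersion (constant-rank) theorem applies directly. I would define the map
\[
  \Phi: \Rbnk \longrightarrow \symset(k), \qquad \Phi(X) = X^TAX - J,
\]
whose zero set is exactly $\iStkn$. The codomain is the space of symmetric $k\times k$ matrices because $X^TAX$ is always symmetric (as $A$ is symmetric), so $\dim\symset(k) = \tfrac{1}{2}k(k+1)$. The whole argument then reduces to checking that $0$ is a regular value of $\Phi$, i.e.\ that $\Df\Phi(X)$ is surjective onto $\symset(k)$ at every $X\in\iStkn$.

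First I would compute the Fréchet derivative: for $Z\in\Rbnk$,
\[
  \Df\Phi(X)[Z] = Z^TAX + X^TAZ = 2\,\sym\!\bigl(X^TAZ\bigr).
\]
The key step is surjectivity of this linear map from $\Rbnk$ onto $\symset(k)$ for each feasible $X$. Given an arbitrary $S\in\symset(k)$, I need to produce a $Z$ with $\sym(X^TAZ) = \tfrac{1}{2}S$. The clean choice is to look for $Z$ of the form $Z = AX\,W$ for some $W$, or more robustly $Z = A^{-1}(AX)\,(\cdot)$; the crucial fact enabling this is that $X$ has full column rank $k$. Indeed, from $X^TAX = J$ with $J^2=I_k$ (hence $J$ nonsingular), any feasible $X$ satisfies $\mathrm{rank}(X)=k$, so $X^TAX = J$ is invertible. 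Taking $Z = AX\,(X^TA^2X)^{-1}\cdot\tfrac{1}{2}S$ — or simply $Z = \tfrac{1}{2}AXJ^{-1}S$ after using $X^TAX=J$ — one verifies $X^TAZ = \tfrac{1}{2}S$, which is already symmetric, so $\sym(X^TAZ)=\tfrac{1}{2}S$. This shows $\Df\Phi(X)$ is onto, so every point of $\iStkn$ is a regular point.

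By the regular level set theorem (the submersion version), $\iStkn = \Phi^{-1}(0)$ is an embedded submanifold of $\Rbnk$ of dimension $\dim\Rbnk - \dim\symset(k) = nk - \tfrac{1}{2}k(k+1)$, giving the stated dimension. Closedness is immediate since $\Phi$ is continuous and $\{0\}$ is closed in $\symset(k)$, so $\iStkn$ is a closed subset of $\Rbnk$. The main obstacle — really the only nontrivial point — is the surjectivity check; everything hinges on verifying that $X$ has full column rank so that the explicit preimage $Z$ above is well defined, which I would justify carefully from $J$ being nonsingular. Care is also needed to ensure the chosen $Z$ truly lands in $\Rbnk$ and that no assumption on definiteness of $A$ is secretly used, since $A$ is only assumed symmetric and nonsingular here.
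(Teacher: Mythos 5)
Your proposal is correct and follows essentially the same route as the paper: both realize $\iStkn$ as the zero level set of $X\mapsto X^TAX-J$ mapping into $\symset(k)$, verify surjectivity of the differential at each feasible point, and invoke the regular level set theorem to get closedness, embeddedness, and the dimension count $nk-\tfrac{1}{2}k(k+1)$; your explicit preimage $Z=\tfrac{1}{2}AX(X^TA^2X)^{-1}S$ is valid (since $AX$ has full column rank, $X^TA^2X=(AX)^T(AX)$ is invertible), just as the paper's choice $Z=\tfrac{1}{2}XJY$ is. One small slip to fix: your ``simpler'' alternative $Z=\tfrac{1}{2}AXJ^{-1}S$ does \emph{not} give $X^TAZ=\tfrac{1}{2}S$, because $X^TA\cdot AX=X^TA^2X\neq X^TAX=J$ in general; the correct simplification is $Z=\tfrac{1}{2}XJ^{-1}S=\tfrac{1}{2}XJS$, which is exactly the preimage used in the paper.
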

		%\ntscomm{check: need to be smooth?}
		\begin{proof}
			Define the map
			\begin{eqnarray*} 
				F: \Rbnk  & \longrightarrow &\symset(k) \\
				X  &\longmapsto &  X^{T}AX-J
			\end{eqnarray*}
			Since F is continuous and $F^{-1}(0) = \iStkn,$ it follows that $\iStkn$ is closed. For any fixed  $X\in\iStkn$, we are going to verify that %$\mathrm{rank}(F)(X) = \dim \symset(k)$. This will be completed if 
			$\Df F(X)$ is a surjection. Indeed, since
			\begin{equation}\label{eq:eqDF}
				\Df F(X)[Z]=X^{T}AZ+Z^{T}AX,\ \forall\ Z\in\Rbnk,
			\end{equation}
			and $J^{T}=J,$ for each $Y\in\symset(k),$ with $Z \coloneqq\frac{1}{2}XJY$, we have   
			\begin{center}
				$\Df F(X)[Z] = \dfrac{1}{2}X^{T}AXJY + \dfrac{1}{2}Y^{T}J^{T}X^{T}AX = Y.$
			\end{center}
			%It means that $\Df F(X)$ is surjective for every $X\in\iStkn$.
			Next, by the Regular Level Set Theorem \cite[Corollary~5.14]{Lee12}, %Constant-rank Level Set Theorem \cite[Theorem~5.12]{Lee12}, 
			it follows that $\iStkn$ is a closed embedded submanifold of $\Rbnk$ and 
			\begin{align*}
					\dim(\iStkn) = %=& \dim F^{-1}(0) = 
					\dim(\Rbnk) - \dim \symset(k)  %\mathrm{rank}(F)(X) \\
					%=& nk- \dim \symset(k) =
					= nk-\dfrac{1}{2}k(k+1),
				\end{align*}
				which completes the proof.
		\end{proof}
		%\begin{remark}\label{rem:remDF_adjoint}
		
		A proof for a special case when both $A$ and $J$ are of the form $\diag(\pm 1,\ldots,\pm 1)$ was given in 
		\cite[Thm. 3.5]{KovaV95}. It can be used to obtain the statement made in Proposition~\ref{theo:PropertiesH} via a suitable diffeomorphism but the given direct proof better prepares the reader for later development. 
		
		Let us denote by $\langle\cdot,\cdot\rangle_{\R^{p\times q}}$  the standard Euclidean inner product on $\R^{p\times q}$. That is, $\langle U,V\rangle_{\R^{p\times q}} =\tr(U^TV)$ for $U,V\in\R^{p\times q}$. Given $X\in\iSt_{A,J},$ we are going to formulate the adjoint operator $\Df F^{*}(X)$ %opertor 
			of $\Df F(X)$ with respect to this inner product, % and particular sizes $p,q.$ 
			%By Theorem \ref{theo:PropertiesH}, the linear operator
			%	\begin{align*}
				%		\Df F(X) :\Rbnk &\longrightarrow \symset(k)\\
				%		Z& \longmapsto X^{T}AZ+Z^{T}AX 
				%	\end{align*}
			%	is surjective. 
			%\textit{adjoint operator} $\Df F^{*}(X)$ to $\Df F(X)$ 
			which is the linear operator
			$$\Df F^{*}(X) :\symset(k) \longrightarrow \Rbnk$$
			satisfying the condition
			%	\begin{equation}\label{eq:DF_adjoint}
				%		g\left(\Df F(X)[Z],\Omega\right)_{\Rbkk} = g\left( Z,\Df F^{*}(X)[\Omega]\right)_{\Rbnk},
				%	\end{equation}
			\begin{equation}\label{eq:DF_adjoint}
				\langle\Df F(X)[Z],\Omega\rangle_{\Rbkk} = \langle Z,\Df F^{*}(X)[\Omega]\rangle_{\Rbnk},\ \forall\ \Omega\in \symset(k),\   Z\in\Rbnk,
			\end{equation}
			and show that $\Df F^{*}(X)$ is injective.
		%for all $\Omega\in \symset(k)$ and for all $Z\in\Rbnk,$ where the Euclidean inner product $g$ on the left-hand side is defined on $\Rbkk$ and the one on the right-hand side is defined on $\Rbnk.$ 
		%We are going to show that the adjoint operator is injective.
		Indeed, since
		\begin{equation*}
				\langle\Df F(X)[Z],\Omega\rangle_{\Rbkk} = \langle  X^{T}AZ+Z^{T}AX,\Omega\rangle_{\Rbkk} %= \tr\left[\left(  X^{T}AZ+Z^{T}AX\right)^{T}\Omega  \right]    \\
				%& = \tr\left[(X^{T}AZ+Z^{T}AX)\Omega\right] = \tr(X^{T}AZ\Omega) + \tr(Z^{T}AX\Omega) \\
				%&= \tr\left[ (Z^{T}AX)^{T}\Omega\right]+\tr(Z^{T}AX\Omega) 
				= 2\tr(Z^{T}AX\Omega)\\
				%& = \tr(Z^{T}2AX\Omega) 
				= \langle Z,2AX\Omega\rangle_{\Rbnk},%\forall Z\in\Rbnk \text{ and } \forall \Omega\in\symset(k),
			\end{equation*}
			in view of %the identity 
			\eqref{eq:DF_adjoint}, we deduce  that 
			\begin{equation}\label{eq:DF_adjoint2}
				\Df F^{*}(X)[\Omega] = 2AX\Omega,\mbox{ for } \Omega\in\symset(k).
		\end{equation}
		Combining with the fact that %Since 
		$X^{T}AX =J$ is invertible, %it follows $AX\in\Rbnk$ is full rank. So, since \eqref{eq:DF_adjoint2} and $k<n,$ 
		it follows that %the adjoint operator 
		$\Df F^{*}(X)$ is injective. 
		%\end{remark}
		
		For %an 
			$X\in\iSt_{A,J}$, let us denote by $X_{\perp}\in\mathbb{R}^{n\times(n-k)}$ a full rank matrix such that $X^{T}X_{\perp}=0\in\mathbb{R}^{k\times(n-k)}.$ Note that $X_{\perp}$ is not unique. The following technical lemma,  similar to  \cite[Lem. 3.2]{GSAS21}, will be useful later. %We say that $X_{\perp}$ is the orthogonal component of $\spn(X) $. 
		%\nts{The matrix $E\coloneqq [X \;\; A^{-1}X_{\perp}]\in \Rbnn$  Some nice properties of $E$ is obtained as follows, \nts{cf. \cite[Lemma 3.2]{GSAS21}}.
			
			\begin{lemma}\label{prop:eleproperty} 
				For $X\in \iStkn$, let us denote the matrix
				\begin{equation*}%\label{eq:Ematrix}
					E\coloneqq [X \;\; A^{-1}X_{\perp}]\in \Rbnn.
				\end{equation*} Then
				\begin{itemize}
					\item[i)] $E$ is nonsingular; 
					\item[ii)] $E^{T}AE = 
					\begin{bmatrix}
						J & 0 \\ 
						0 & X_{\perp}^{T}A^{-1}X_{\perp} 
					\end{bmatrix}$ and $X_{\perp}^{T}A^{-1}X_{\perp}$ is nonsingular;
					\item[iii)] $E^{-1}=
					\begin{bmatrix}
						JX^{T}A \\ 
						(X_{\perp}^{T}A^{-1}X_{\perp})^{-1}X_{\perp}^{T} 
					\end{bmatrix};$
					\item[iv)] For any $Z\in\Rbnk,$ there are $W\in\Rbkk$ and $K\in\mathbb{R}^{(n-k)\times k} $ such that
					\begin{equation}
						Z = XW + A^{-1}X_{\perp}K.\label{eq:eq21}
					\end{equation}
					Moreover, $W = JX^{T}AZ, K = (X_{\perp}^{T}A^{-1}X_{\perp})^{-1}X_{\perp}^{T}Z,$ and $A^{-1}X_{\perp}K$ is independent of $X_{\perp}.$
				\end{itemize} 	
			\end{lemma}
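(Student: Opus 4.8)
The plan is to reduce all four claims to a single block computation of $E^{T}AE$ together with one linear-independence argument, proving (i) first so as to avoid any circularity in the nonsingularity assertions.

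First I would establish (i) directly by showing the $n$ columns of $E=[X\;\;A^{-1}X_{\perp}]$ are linearly independent. Suppose $Xa+A^{-1}X_{\perp}b=0$ for some $a\in\R^{k}$, $b\in\R^{n-k}$. Left-multiplying by $X^{T}A$ and using $X^{T}AA^{-1}X_{\perp}=X^{T}X_{\perp}=0$ collapses this to $X^{T}AXa=Ja=0$; since $J^{2}=I_{k}$ forces $J$ to be invertible, $a=0$. The relation then reduces to $A^{-1}X_{\perp}b=0$, and because $A$ is nonsingular and $X_{\perp}$ has full column rank, $b=0$. Hence $E$ is nonsingular. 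This "$X^{T}A$ trick", which isolates the top block by exploiting $X^{T}AX=J$ and $X^{T}X_{\perp}=0$, is the one genuinely load-bearing step; the remainder is bookkeeping. It is also the place requiring the most care, because the nonsingularity of the lower block in (ii) and the invertibility of $E$ are a priori intertwined, and doing (i) by hand first breaks that coupling.

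Next I would compute $E^{T}AE$ by block multiplication. Substituting $X^{T}AX=J$, $X^{T}X_{\perp}=0$, and $X_{\perp}^{T}AA^{-1}X_{\perp}=X_{\perp}^{T}A^{-1}X_{\perp}$ into the four blocks yields exactly the block-diagonal matrix claimed in (ii). The nonsingularity of $X_{\perp}^{T}A^{-1}X_{\perp}$ then comes for free: $E^{T}AE$ is a product of nonsingular matrices by (i) and the nonsingularity of $A$, and its determinant factors as $\det(J)\det(X_{\perp}^{T}A^{-1}X_{\perp})$ with $\det(J)=\pm1$ (again from $J^{2}=I_{k}$), so the lower block must be nonsingular.

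With $E^{T}AE=D$, where $D=\diag(J,\,X_{\perp}^{T}A^{-1}X_{\perp})$ is nonsingular, claim (iii) is immediate from $E^{-1}=D^{-1}E^{T}A$: computing $D^{-1}=\diag\dkh{J,(X_{\perp}^{T}A^{-1}X_{\perp})^{-1}}$ via $J^{-1}=J$ and $E^{T}A=\left[\begin{smallmatrix}X^{T}A\\X_{\perp}^{T}\end{smallmatrix}\right]$ via $X_{\perp}^{T}A^{-1}A=X_{\perp}^{T}$, the product reproduces the stated formula for $E^{-1}$. Finally, (iv) follows by writing $Z=E\dkh{E^{-1}Z}$ and partitioning $E^{-1}Z=\left[\begin{smallmatrix}W\\K\end{smallmatrix}\right]$ with $W\in\Rbkk$ and $K\in\R^{(n-k)\times k}$; reading the two blocks off (iii) gives $W=JX^{T}AZ$ and $K=(X_{\perp}^{T}A^{-1}X_{\perp})^{-1}X_{\perp}^{T}Z$, and the decomposition $Z=XW+A^{-1}X_{\perp}K$ is then forced. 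For the independence assertion I would observe that $XW=XJX^{T}AZ$ involves only $X,J,A,Z$, so $A^{-1}X_{\perp}K=Z-XW$ is determined by data not involving $X_{\perp}$, which shows this term is independent of the choice of $X_{\perp}$.
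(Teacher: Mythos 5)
Your proposal is correct and follows essentially the same route as the paper: the same $X^{T}A$ multiplication to prove (i), the same block computation of $E^{T}AE$ for (ii), the identity $E^{-1}=(E^{T}AE)^{-1}E^{T}A$ for (iii), and reading $W,K$ off $E^{-1}Z$ for (iv), including the same argument that $A^{-1}X_{\perp}K=Z-XJX^{T}AZ$ is independent of $X_{\perp}$. The only blemish is a typo in your (ii): the substitution for the lower-right block should be $(A^{-1}X_{\perp})^{T}A(A^{-1}X_{\perp})=X_{\perp}^{T}A^{-1}AA^{-1}X_{\perp}=X_{\perp}^{T}A^{-1}X_{\perp}$ (using the symmetry of $A$), not $X_{\perp}^{T}AA^{-1}X_{\perp}$, which would equal $X_{\perp}^{T}X_{\perp}$.
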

			\begin{proof}
				\begin{itemize}\item[i)]
					To proceed, we will verify that the homogeneous linear system $E \left[\begin{smallmatrix}
							y_1 \\ 
							y_2
						\end{smallmatrix}\right]=0$ %\in\mathbb{R}^{n\times 1},$ 
						with $y_1\in\mathbb{R}^{k}, y_2\in\mathbb{R}^{n-k}$ has only the trivial solution. %This can be seen when
					Multiply both sides of the equation by $X^{T}A$ from the left to obtain %$X^{T}AX_1 + X^{T}AA^{-1}X_{\perp}Y_2 = 0$ or 
						$Jy_1 = 0$. This results in %since $X^{T}X_{\perp} = 0.$ Hence, $Y_1 =0.$ This follows that the homogeneous system is just 
					$A^{-1}X_{\perp}y_2 = 0$ which immediately implies that $y_2=0$ %This means $Y_2 = 0$ 
					because $A^{-1}X_{\perp}\in\mathbb{R}^{n\times (n-k)}$ is of full rank.% matrix.
					\item[ii)] The identity follows from a direct calculation. %$E^{T}AE = E^{T} [AX\;X_{\perp}] = \begin{bmatrix}
						%			X^{T} \\ 
						%			X_{\perp}^{T}A^{-T}
						%		\end{bmatrix} [AX\;X_{\perp}] = \begin{bmatrix}
						%		X^{T}AX & X^{T}X_{\perp} \\ 
						%		X_{\perp}^{T}A^{-1}AX & X_{\perp}^{T}A^{-1}X_{\perp}
						%		\end{bmatrix} = \begin{bmatrix}
						%		J & 0\\ 
						%		0 & X_{\perp}^{T}A^{-1}X_{\perp}
						%		\end{bmatrix}.$ 
					Moreover, since the matrices $E$ and $A$ are invertible, so are $E^{T}AE$ and its diagonal block $X_{\perp}^{T}A^{-1}X_{\perp}.$
					\item[iii)] The equality in ii) yields that 
						$	E^{-1} = \left[\begin{smallmatrix}
							J & 0\\ 
							0 & (X_{\perp}^{T}A^{-1}X_{\perp})^{-1}
						\end{smallmatrix}\right] E^TA
						$
						which leads to the expected equality. %From ii, multiplying both sides of that identity to the right by $E^{-1}$ and to the left by the inverse matrix of the right hand side of the identity yields
					%		$  \begin{bmatrix}
						%			J & 0\\ 
						%			0 & (X_{\perp}^{T}A^{-1}X_{\perp})^{-1}
						%		\end{bmatrix}E^{T}A = E^{-1}$ or $ \begin{bmatrix}
						%		J & 0\\ 
						%		0 & (X_{\perp}^{T}A^{-1}X_{\perp})^{-1}
						%		\end{bmatrix} \begin{bmatrix}
						%		X^{T} A\\ 
						%		 X_{\perp}^{T}
						%		\end{bmatrix} = E^{-1}.$
					%		The last identity means that
					%		$E^{-1} = \begin{bmatrix}
						%			JX^{T} A\\ 
						%			(X_{\perp}^{T}A^{-1}X_{\perp})^{-1}X_{\perp}^{T}
						%		\end{bmatrix}.$
					\item[iv)] For any $Z\in\Rbnk,$ let $W\in\Rbkk$ and $K\in\mathbb{R}^{(n-k)\times k}$ be the matrices obtained from the identity $E^{-1}Z =  \left[\begin{smallmatrix}
						W\\ 
						K
					\end{smallmatrix}\right].$ From iii), we get $W = JX^{T}AZ$ and $ K = (X_{\perp}^{T}A^{-1}X_{\perp})^{-1}X_{\perp}^{T}Z.$ %By definition, we have
					So,
					$Z = E  \left[\begin{smallmatrix}
						W\\ 
						K
					\end{smallmatrix}\right] = [X\; A^{-1}X_{\perp}] \left[\begin{smallmatrix}
						W\\ 
						K
					\end{smallmatrix}\right] = XW +  A^{-1}X_{\perp}K.$ 
					The independence of $X_{\perp}$  %$A^{-1}X_{\perp}K$ from  
						is derived from the fact %facts 
						that $A^{-1}X_{\perp}K = Z - XW$ with $W = JX^TAZ$.
				\end{itemize}
				% is independent from $X_{\perp}$.
				%		Moreover, from the identity 
				%		\begin{equation}\label{eq:eq22}
					%		\begin{aligned}
						%			I_{n} = E^{-1}E &= [X\; \;A^{-1}X_{\perp}]\begin{bmatrix}
							%				JX^{T} A\\ 
							%				(X_{\perp}^{T}A^{-1}X_{\perp})^{-1}X_{\perp}^{T}
							%			\end{bmatrix} \\
						%			&= XJX^{T}A + A^{-1}X_{\perp}(X_{\perp}^{T}A^{-1}X_{\perp})^{-1}X_{\perp}^{T},
						%		\end{aligned}
					%		\end{equation}
				%        it follows that
				%		\begin{center}
					%		$A^{-1}X_{\perp}K = A^{-1}X_{\perp}(X_{\perp}^{T}A^{-1}X_{\perp})^{-1}X_{\perp}^{T}Z = (I_n - XJX^{T}A)Z.$
					%		\end{center}
				%		Therefore, $A^{-1}X_{\perp}K$ is independent of $X_{\perp}.$
			\end{proof}
			
			%\section{Tangent space of $\iStkn$} \label{sec:TangentspaceH} 
			%In this section, we determine 
			Next, we determine the tangent space at a point on the manifold $\iStkn.$ This is crucial for optimization with manifold constraint since most iterative methods update the iterate %\dvt{iteration} 
			along a vector belonging to this space. Recall that %for any point $X\in\iStkn,$ 
			the tangent space at $X\in  \iStkn$ is the set %calculating the gradient on $\iStkn.$ Firstly, we give the definition of the tangent space $T_X\iStkn$ of the manifold $\iStkn$ at point $X$ in $\iStkn.$ For any point $X\in\iStkn,$ we define the set 
				\begin{equation*}
					T_{X}\iStkn = \left\lbrace \dot{\varphi}(0) \;  |\;  \varphi: I\ni 0 \longrightarrow \iStkn \text{ is a smooth curve and }\varphi(0)=X \right\rbrace.
			\end{equation*} 
			The following proposition characterizes the elements of this tangent space.
			%By the linearity of the differential operator, $T_X\iStkn$ has a vector space structure. It is in fact a subspace of $\iStkn,$ called the tangent space of $\iStkn$ at $X.$ Each vector of such space is called a tangent vector.
			%\subsection{Tangent vectors of $\iStkn$}\label{subsec:TangentvectorsH}
			%Given $X\in\iStkn,$ the tangent space $T_{X}\iStkn$ of $\iStkn$ at $X$ can be described as follows.
			
			\begin{proposition}\label{prop:TxH}
				For a given $X\in \iStkn$, the tangent space $T_{X}\iStkn$ {can be expressed as follows:} 
				\begin{subequations}
					\begin{align}
						T_{X}\iStkn & = \left\{Z\in\Rbnk: Z^{T}AX + X^{T}AZ = 0\right\}\label{eq:eq31a} \\
						& = \left\{XW + A^{-1}X_{\perp}K: JW\in\skewset(k),    K\in\mathbb{R}^{(n-k)\times k} \right\} \label{eq:eq31b}\\
						& = \left\{   SAX: S\in\skewset(n)  \right\}. \label{eq:eq31c}
					\end{align}
				\end{subequations}
			\end{proposition}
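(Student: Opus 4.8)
The plan is to prove the three descriptions coincide by first identifying the tangent space with the kernel of $\Df F(X)$ and then chaining inclusions among the three sets. For brevity, denote by $\mathcal{T}_1,\mathcal{T}_2,\mathcal{T}_3$ the right-hand sides of \eqref{eq:eq31a}, \eqref{eq:eq31b}, \eqref{eq:eq31c}, respectively. First I would establish $T_{X}\iStkn = \mathcal{T}_1$. Proposition~\ref{theo:PropertiesH} already exhibits $\iStkn$ as the regular level set $F^{-1}(0)$ with $\Df F(X)$ surjective at every feasible $X$; the standard identification of the tangent space of a regular level set with the kernel of its defining submersion then gives $T_{X}\iStkn = \ker \Df F(X)$. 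Recalling the formula \eqref{eq:eqDF} for $\Df F(X)[Z]$, this kernel is exactly $\{Z\in\Rbnk : X^{T}AZ + Z^{T}AX = 0\} = \mathcal{T}_1$, whose dimension $nk-\tfrac{1}{2}k(k+1)$ is consistent with Proposition~\ref{theo:PropertiesH}.

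Next I would show $\mathcal{T}_1 = \mathcal{T}_2$ by invoking the decomposition of Lemma~\ref{prop:eleproperty}(iv): every $Z\in\Rbnk$ is written uniquely as $Z = XW + A^{-1}X_{\perp}K$. A short computation using $X^{T}AX = J$ and $X^{T}X_{\perp} = 0$ yields $X^{T}AZ = JW$, whence $X^{T}AZ + Z^{T}AX = JW + (JW)^{T}$. Therefore $Z\in\mathcal{T}_1$ if and only if $JW$ is skew-symmetric, which is precisely the defining condition of $\mathcal{T}_2$; since $K$ is unconstrained in both descriptions, the two sets coincide. The inclusion $\mathcal{T}_3 \subseteq \mathcal{T}_1$ is then immediate: for $Z = SAX$ with $S\in\skewset(n)$, symmetry of $A$ gives $X^{T}AZ = X^{T}ASAX$ and $Z^{T}AX = -X^{T}ASAX$, so their sum vanishes.

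The remaining reverse inclusion $\mathcal{T}_2 \subseteq \mathcal{T}_3$, i.e.\ the explicit construction of a skew-symmetric $S$ with $SAX = Z$, is the step I expect to be the main obstacle, and I would handle it by working in the $E$-coordinates of Lemma~\ref{prop:eleproperty}. Writing $S = EME^{T}$, so that $S$ is skew-symmetric exactly when $M$ is, and using the identity $E^{T}AX = \left[\begin{smallmatrix} J \\ 0 \end{smallmatrix}\right]$ (which follows from $X^{T}AX=J$ and $X_{\perp}^{T}X = 0$), one computes in block form $SAX = X M_{11}J + A^{-1}X_{\perp}M_{21}J$, where $M_{11}$ is the leading $k\times k$ block of $M$ and $M_{21}$ the lower-left $(n-k)\times k$ block. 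Matching this against $Z = XW + A^{-1}X_{\perp}K$ and using the uniqueness in Lemma~\ref{prop:eleproperty}(iv) forces $M_{11} = WJ$ and $M_{21} = KJ$, while the blocks $M_{12},M_{22}$ remain free to enforce skewness (for instance $M_{22}=0$ and $M_{12} = -(KJ)^{T}$).

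The one nontrivial point is that $M_{11} = WJ$ must itself be skew-symmetric; here I would use $J^{2} = I_{k}$ to observe that conjugating by $J$ turns the condition ``$JW\in\skewset(k)$'' into ``$WJ\in\skewset(k)$'', so the two are equivalent and the defining hypothesis of $\mathcal{T}_2$ supplies exactly what is required. This produces $Z = SAX \in \mathcal{T}_3$, closing the chain $\mathcal{T}_2 \subseteq \mathcal{T}_3 \subseteq \mathcal{T}_1 = \mathcal{T}_2$ and hence establishing equality of all three sets with $T_{X}\iStkn$.
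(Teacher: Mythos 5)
Your proposal is correct, and for \eqref{eq:eq31a} and \eqref{eq:eq31b} it follows the paper's argument exactly (kernel of the submersion $\Df F(X)$, then the decomposition $Z = XW + A^{-1}X_{\perp}K$ from Lemma~\ref{prop:eleproperty}(iv) reducing the tangency condition to $JW+(JW)^T=0$). Where you genuinely diverge is \eqref{eq:eq31c}: after the easy inclusion $\{SAX\}\subseteq T_X\iStkn$, the paper does \emph{not} construct $S$; it instead counts dimensions, setting $P=AE$ and showing that $S\mapsto P^TSAX$ sweeps out all pairs $(B_{11},B_{21})\in\skewset(k)\times\mathbb{R}^{(n-k)\times k}$, so that $\dim\{SAX\}=nk-\tfrac{1}{2}k(k+1)=\dim T_X\iStkn$ and the subspace inclusion must be an equality. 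You close the loop constructively, exhibiting $S=EME^T$ with $M=\left[\begin{smallmatrix}WJ & -(KJ)^T\\ KJ & 0\end{smallmatrix}\right]$ and checking via $J^2=I_k$ that $JW\in\skewset(k)$ is equivalent to $WJ\in\skewset(k)$; I verified that $E^TAX=\left[\begin{smallmatrix}J\\0\end{smallmatrix}\right]$ and $SAX=XW+A^{-1}X_{\perp}K$ indeed hold, so the construction is sound. The two arguments use the same frame ($E$ versus $P=AE$) but buy different things: the paper's dimension count is shorter and avoids verifying skewness of the candidate blocks, while your version produces an explicit skew-symmetric representative $S_{}$ for any given tangent vector, which is more informative and foreshadows the role of the explicit generator $S_{X,Z}$ in \eqref{eq:2identicalSxz} used for the Cayley retraction.
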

			\begin{proof} %\nts{The first characterization \eqref{eq:eq31a} is derived by differentiating at $0$ the relation $\varphi(t)^TA\varphi(t)^T = J$.}
				Let $F$ be as in the proof of Proposition~\ref{theo:PropertiesH}. In view of \cite[Proposition 5.38]{Lee12}, $T_{X}\iStkn = \mathrm{ker}( \Df F(X))$, which together with \eqref{eq:eqDF}, yields \eqref{eq:eq31a}. Note that using the definition, \eqref{eq:eq31a} can also be derived by differentiating the relation $X(t)^TAX(t) = J$ at $0$. %From that proof and \cite[equation (3.19)]{AbsiMS08}, we have 
				%	\begin{center}
					%			$\left\{Z\in\Rbnk: Z^{T}AX + X^{T}AZ = 0\right\} = \left\{Z\in\Rbnk: \Df F(X)[Z] = 0 \right\} = T_{X}\iStkn.$
					%	\end{center}
				%This proves relation \eqref{eq:eq31a}. 
				
				To verify \eqref{eq:eq31b}, we use \eqref{eq:eq21} in Lemma \ref{prop:eleproperty}. For any $Z\in\Rbnk,$ there are $W\in\Rbkk$ and $K\in\mathbb{R}^{(n-k)\times k}$ such that 
				$Z = XW + A^{-1}X_{\perp}K.$ Therefore, 
				\begin{align*}
					Z^{T}AX + X^{T}AZ & = (W^{T}X^{T} + K^{T}X_{\perp}^{T}A^{-1})AX + X^{T}A(XW + A^{-1}X_{\perp}K)\\
					& = W^{T}J + JW = (JW)^{T}+JW.
				\end{align*}
				By \eqref{eq:eq31a}, $Z\in T_{X}\iStkn$ if and only if $JW=-(JW)^{T}\in\skewset(k).$ %This shows \eqref{eq:eq31b}.
				
				Finally, we see that $\left\{   SAX: S\in\skewset(n)  \right\}$ is a linear subspace of $ T_{X}\iStkn$ since 
				%\begin{center}
				$(SAX)^{T}AX+X^{T}A(SAX)=X^{T}AS^{T}AX+X^{T}ASAX=0.$ %\nts{and that it is a linear space.} 
				%\end{center}
				%Moreover, $\dim \iStkn %=\dim\left(F^{-1}(0)\right)
				%= \dim T_{X}\iStkn.$ 
				Hence, to verify \eqref{eq:eq31c}, 
				it is sufficient to prove that
				$\dim \left\{   SAX: S\in\skewset(n)  \right\} = \dim \iStkn$ which is also the dimension of $T_{X}\iStkn$.  
				Indeed, let $S\in \skewset(n)$ and $P = AE = [AX\ \ X_{\perp}],$ where $E$ is as in Lemma \ref{prop:eleproperty}. Then $P$ is nonsingular and $AX = P\left[\begin{smallmatrix}
					I_k\\ 
					0
				\end{smallmatrix}\right].$ So $P^{T}SAX = P^{T}SP\left[\begin{smallmatrix}
					I_k\\ 
					0
				\end{smallmatrix}\right] =: B\left[\begin{smallmatrix}
					I_k\\ 
					0
				\end{smallmatrix}\right],$ where $B \coloneqq P^{T}SP\in\skewset(n).$ 
				
				Assume that $B =\left[\begin{smallmatrix}
					B_{11}&-B_{21}^{T}\\ 
					B_{21}&B_{22}
				\end{smallmatrix}\right],$ with $B_{11}\in\skewset(k), B_{22}\in\skewset(n-k)$ and $B_{21}\in\mathbb{R}^{(n-k)\times k}.$ Then, $P^{T}SAX = B\left[\begin{smallmatrix}
					I_k\\ 
					0
				\end{smallmatrix}\right] = \left[\begin{smallmatrix}
					B_{11}\\ 
					B_{21}
				\end{smallmatrix}\right].$ Therefore,
				\begin{align*}
					\dim\left\{   SAX: S\in\skewset(n)  \right\} &= \dim\left\{   P^{T}SAX: S\in\skewset(n)  \right\} \\
					&= \dim\left\{\begin{bmatrix}
						B_{11}\\ 
						B_{21}
					\end{bmatrix}: B_{11}\in\skewset(k), B_{21}\in\mathbb{R}^{(n-k)\times k}\right\} \\
					&= \dim\skewset(k) + \dim\mathbb{R}^{(n-k)\times k} \\
					&= \dfrac{1}{2}k(k-1) + (n-k)k = nk - \dfrac{1}{2}k(k+1) \\
					& = \dim\iStkn.
				\end{align*}
				Here, the first identity follows from the nonsingularity of $P$ and $P^{T}$ whereas the last one has just been obtained in Proposition~\ref{theo:PropertiesH}. 
			\end{proof}
				\begin{remark}\label{rem:Compare_TangentVect}
					The formula \eqref{eq:eq31a} was derived for several manifolds as a quadratic constraint, see \cite[(3.24)]{AbsiMS08}, \cite[sect. 3.1]{YgerBGR12}, and \cite[(3.8a)]{GSAS21} while \eqref{eq:eq31b} is an extension of \cite[(2.5)]{EdelAS98}, \cite[sect. 3.1]{YgerBGR12}; cf., \cite[(3.8b)]{GSAS21}.
			\end{remark}
			\section{Metric and geometric objects %Riemannian geometry 
				on $\iSt_{A,J}$}\label{sec:Metric}
				A metric $\mathrm{g}_X$ on the manifold $\iSt_{A,J}$ is an inner product defined on each tangent space $T_X\iSt_{A,J}$ which is required to depend smoothly on $X$. In this case, the pair $(\iSt_{A,J}, \mathrm{g}_X)$ becomes a Riemannian manifold and the metric is referred to as a Riemannian metric. %A metric allows quantification such as length, distance, angle and, particularly, the definition of orthogonality which is essential in designing many optimization methods.}
		On a Riemannian manifold, a metric allows quantification such as length, distance, angle and, particularly, the definition of orthogonality which is essential in designing many optimization methods. %As each tangent space $T_X\iSt_{A,J}$ is a subspace of $\Rbnk$, each metric has a natural extension of itself on $\Rbnk$ based on which.} 
	
	There are numerous ways to select
		%of choice for 
		a metric for a given smooth manifold. This freedom is exploited to get advantages in computation in the framework of preconditioning, see \cite{MishS2016,DongGGG2022,ShusA2023,GaoPY2024}, 
		%\dvt{\cite{DongGGG2022,GaoPY2024,MishS2016,ShusA2023},} 
		to name a few. To this end, instead of a single one, in this section we will equip $\iSt_{A,J}$ with a family of tractable metrics, which was recently introduced in \cite{NguyD23}, used for the generalized Stiefel manifold \cite{ShusA2023} and the symplectic Stiefel manifold \cite{GaoSS2024}, and present the resulting geometric structure. 

Let $\Mx\in\SPD(n)$ depend %\nts{depend}
smoothly on $X\in\iStkn$. We define an inner product $\gM$ on $T_{X}\iStkn$ by
\begin{equation}\label{eq:def_gMx}
	\gM(Z_1,Z_2)=\tr(Z_1^{T}\Mx Z_2), \text{ for all } Z_1, Z_2\in T_X\iStkn.
\end{equation}
It can be seen that $\gM$ is indeed a metric not only on $\iStkn$ %$T_{X}\iStkn$ 
but also on $\Rbnk$ by letting $Z_1, Z_2$ vary in $\Rbnk$. %Hence, it induces a metric on $\iStkn,$ called \textit{a tractable metric} with respect to $\Mx.$ This inner product can be naturally extended to an inner product $\gM$ on $\Rbnk$ by
% \begin{equation}\label{eq:def_gMx}
	% 	\begin{aligned}
		% 		\gM:\Rbnk\times\Rbnk & \longrightarrow \R \\
		% 		\gM(Z_1,Z_2) &= \tr(Z_1\Mx Z_2),
		% 	\end{aligned}
	%\end{equation}
	%for all $Z_1, Z_2\in\Rbnk.$ 
	As a result, we can define the \textit{normal space} $T_{X}\iStkn^{\perp}_{\gM}$ with respect to the inner product $\gM$ of $\Rbnk$ as
	\begin{equation*}
		T_{X}\iStkn^{\perp}_{\gM} \coloneqq \left\{ N\in\Rbnk: \gM(N,Z)=0,\text{ for all } Z\in T_{X}\iStkn\right\}.
	\end{equation*}
	The following characterization of $T_{X}\iStkn^{\perp}_{\gM}$ %is crucial to
	helps %formulating
	%to 
	formulate the orthogonal projection onto $T_{X}\iStkn$. %with respect to $\gM$.} 
\begin{proposition}\label{prop:normalspace_gM}
	The normal space %to $X\in \iStkn$ 
		to $\iStkn$ at $X\in \iStkn$ with respect to $\gM$ is given by	$T_{X}\iStkn^{\perp}_{\gM}=\left\lbrace \Mxa AXW \in\Rbnk,\; W\in\symset(k)\right\rbrace.$
\end{proposition}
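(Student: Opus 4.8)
The plan is to prove the claimed description of the normal space by a standard dimension-counting argument combined with a direct verification that every proposed element is genuinely normal. First I would fix $X\in\iStkn$ and introduce the candidate set $\mathcal{N} \coloneqq \{\Mxa AXW : W\in\symset(k)\}$. The goal is to show $T_{X}\iStkn^{\perp}_{\gM} = \mathcal{N}$, and the cleanest route is to verify the inclusion $\mathcal{N}\subseteq T_{X}\iStkn^{\perp}_{\gM}$ directly and then match dimensions.

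For the inclusion, I would take an arbitrary $N = \Mxa AXW$ with $W\in\symset(k)$ and an arbitrary tangent vector $Z\in T_{X}\iStkn$, and compute
\begin{equation*}
	\gM(N,Z) = \tr(N^{T}\Mx Z) = \tr(W^{T}X^{T}A\,\Mxa{}^{T}\Mx Z) = \tr(W X^{T}A Z),
\end{equation*}
using that $\Mx$ is symmetric so that $\Mxa{}^{T}\Mx = I_n$, and that $W^{T}=W$. Now the key observation is that the tangent-space condition \eqref{eq:eq31a}, namely $X^{T}AZ + Z^{T}AX = 0$, says precisely that $X^{T}AZ\in\skewset(k)$. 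Since $W$ is symmetric and the trace of the product of a symmetric and a skew-symmetric matrix vanishes, we get $\tr(W X^{T}AZ)=0$, hence $\gM(N,Z)=0$. This shows $\mathcal{N}\subseteq T_{X}\iStkn^{\perp}_{\gM}$.

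To finish, I would compare dimensions. The map $W\mapsto \Mxa AXW$ from $\symset(k)$ into $\Rbnk$ is linear and injective: if $\Mxa AXW = 0$ then $AXW=0$, and since $A$ is nonsingular and $X$ has full column rank (because $X^{T}AX=J$ is invertible), $X$ is left-invertible, forcing $W=0$. Hence $\dim\mathcal{N} = \dim\symset(k) = \tfrac{1}{2}k(k+1)$. On the other hand, because $\gM$ restricts to an inner product on the ambient space $\Rbnk$, we have the orthogonal decomposition $\Rbnk = T_{X}\iStkn \oplus T_{X}\iStkn^{\perp}_{\gM}$, so that
\begin{equation*}
	\dim T_{X}\iStkn^{\perp}_{\gM} = nk - \dim T_{X}\iStkn = nk - \left(nk - \tfrac{1}{2}k(k+1)\right) = \tfrac{1}{2}k(k+1),
\end{equation*}
invoking Proposition~\ref{prop:TxH} (equivalently Proposition~\ref{theo:PropertiesH}) for $\dim T_{X}\iStkn$. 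Since $\mathcal{N}$ is a subspace of $T_{X}\iStkn^{\perp}_{\gM}$ of the same finite dimension, the two coincide, which proves the claim.

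I expect no serious obstacle here; the argument is essentially a clean dimension count once the trace-of-symmetric-times-skew cancellation is identified. The only point requiring mild care is the injectivity step and the justification that $\gM$ genuinely induces an orthogonal splitting of $\Rbnk$ (so that the normal space has the complementary dimension) — both follow from the nonsingularity of $A$ and of $J=X^{T}AX$, which guarantee the full column rank of $X$.
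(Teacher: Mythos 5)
Your proof is correct, but it follows a genuinely different route from the paper's. The paper works with the explicit basis decomposition $N = \Mxa AE\left[\begin{smallmatrix}\tilde{W}_N\\ \tilde{K}_N\end{smallmatrix}\right]$ furnished by Lemma~\ref{prop:eleproperty}, writes a general tangent vector as $Z = E\left[\begin{smallmatrix}W_Z\\ K_Z\end{smallmatrix}\right]$ with $JW_Z\in\skewset(k)$, and uses the block identity $E^TAE = \diag(J,\, X_\perp^TA^{-1}X_\perp)$ to reduce $\gM(N,Z)$ to $\tr(\tilde{W}_N^TJW_Z) + \tr\bigl(\tilde{K}_N^T(X_\perp^TA^{-1}X_\perp)K_Z\bigr)$; letting $Z$ range over the tangent space then forces $\tilde{W}_N\in\symset(k)$ and $\tilde{K}_N=0$ in one stroke, characterizing the normal space directly without a dimension count. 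You instead verify the easy inclusion $\mathcal{N}\subseteq T_X\iStkn^{\perp}_{\gM}$ via the symmetric-times-skew trace cancellation (using \eqref{eq:eq31a}) and close the gap by matching $\dim\mathcal{N} = \frac{1}{2}k(k+1)$ against the codimension of the tangent space from Proposition~\ref{theo:PropertiesH}. Your argument is shorter and avoids both Lemma~\ref{prop:eleproperty} and the surjectivity claims about $JW_Z$ and $(X_\perp^TA^{-1}X_\perp)K_Z$; its only extra dependencies are the injectivity of $W\mapsto \Mxa AXW$ (which you correctly reduce to the full column rank of $X$, guaranteed by the invertibility of $X^TAX = J$) and the standard fact that an inner product on $\Rbnk$ yields an orthogonal splitting of complementary dimensions. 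The paper's approach, by contrast, delivers the explicit coordinates $(\tilde{W}_N,\tilde{K}_N)$ of an arbitrary vector relative to the tangent/normal splitting, which is conceptually closer to the projection formulae derived afterwards. Both proofs are sound; yours is the more economical, the paper's the more constructive.
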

\begin{proof}
	%Firstly, for the standard Euclidean inner product $g$ on $\Rbkk,$ we have
	%	\begin{equation}
		%		\symset(k)\overset{\perp_g}{\bigoplus}\skewset(k) = \Rbkk.\label{eq:eq321}
		%	\end{equation}
	%	Indeed, for any $\Omega\in \Rbkk,$ we have $\Omega = \sym(\Omega) + \skew(\Omega),$ where $\sym(\Omega)\in\symset(k)$ and $\skew(\Omega)\in\skewset(k).$ So, from the fact $\symset(k)\cap\skewset(k)=\{0\},$ it follows that $\symset(k)\bigoplus\skewset(k) = \Rbkk.$ Moreover, for any $U\in\symset(k),V\in\skewset(k),$ we have 
	%	\begin{align*}
		%		\tr(U^{T}V) &= \tr(UV) = \tr\left[(UV)^{T}\right]\\ 
		%		&= \tr\left(V^{T}U^{T}\right) = \tr\left(-VU\right) =-\tr(VU) = -\tr(UV).
		%	\end{align*}
	%	So $\tr(U^{T}V) = \tr(UV) = 0.$ That means $\symset(k)$ and $\skewset(k)$ are orthogonal to each other with respect to Euclidean metric $g.$ This follows the observation \eqref{eq:eq321}.
	
	%Now, we are going to describe the normal space. 
	%\chg{Let $E\in\Rbnn$ be defined as in Lemma~\ref{prop:eleproperty} and $N\in\Rbnk$ be arbitrary}
	Consider $E\in\Rbnn$ defined as in Lemma~\ref{prop:eleproperty} and any $N\in\Rbnk$.
	In view of Lemma~\ref{prop:eleproperty}.(iv), %Since $E$ is invertible, 
	there exist unique matrices $\tilde{W}_N\in\Rbkk$ and $\tilde{K}_N\in\mathbb{R}^{(n-k)\times k}$ such that
	$A^{-1}\Mx N = E\left[\begin{smallmatrix}
		\tilde{W}_N\\
		\tilde{K}_N
	\end{smallmatrix}\right]$ which yields 
	%So, 
	$N = \Mxa AE\left[\begin{smallmatrix}
		\tilde{W}_N \\
		\tilde{K}_N
	\end{smallmatrix}\right].$ %\ntscomm{Changed to $\tilde{W}$ because the way it relates to $N$ is different.} %Hence,
	%$N^{T} = \left[ W_N^{T}\ \ K_N^{T}\right]E^{T}A\Mx^{-T}=\left[ W_N^{T}\ \ K_N^{T}\right]E^{T}A\Mxa.$
	%Moreover, 
	Next, using the form \eqref{eq:eq31b} for $Z\in T_{X}\iStkn,$ we can write %since $E$ is invertible and by \eqref{eq:eq31b}, there exist $W_Z\in\Rbkk$ and $K_Z\in\R^{(n-k)\times k}$ where $JW_Z\in\skewset(k)$ such that
	%$
	$Z = XW_Z+A^{-1}X_{\perp}K_Z = E \left[\begin{smallmatrix}
		W_Z\\
		K_Z
	\end{smallmatrix}\right].$ 
	We obtain then
	\begin{align*}
		N^{T}\Mx Z %& \left[ W_N^{T}\ \ K_N^{T}\right]E^{T}A\Mxa\Mx Z \\
		=& \left[ \tilde{W}_N^{T}\ \ \tilde{K}_N^{T}\right]E^{T}AE \left[\begin{matrix}
			W_Z\\
			K_Z
		\end{matrix}\right]\\
		=& \left[ \tilde{W}_N^{T}\ \ \tilde{K}_N^{T}\right] \begin{bmatrix}
			J & 0 \\ 
			0 & X_{\perp}^{T}A^{-1}X_{\perp} 
		\end{bmatrix} \left[\begin{matrix}
			W_Z\\
			K_Z
		\end{matrix}\right]\\
		=& \tilde{W}_N^{T}JW_Z + \tilde{K}_N^{T}(X_{\perp}^{T}A^{-1}X_{\perp})K_Z,
	\end{align*}
	where the second %third 
	identity follows from Lemma~\ref{prop:eleproperty}.(ii). Hence %the result of $E^{T}AE$ in Lemma \ref{prop:eleproperty}. So,
	$$\gM(N,Z) =\tr(N^{T}\Mx Z) = \tr\left( \tilde{W}_N^{T}JW_Z\right) +\tr\left(\tilde{K}_N^{T}(X_{\perp}^{T}A^{-1}X_{\perp})K_Z\right).$$
	%$$\gM(N,Z) =\tr(N^{T}\Mx Z) = \tr\left( W_N^{T}JW_Z\right) +\tr\left(K_N^{T}(X_{\perp}^{T}A^{-1}X_{\perp})K_Z\right).$$
	When $Z$ varies all over $T_X\iStkn,$ %the vectors 
	one can check that $JW_Z$ and $(X_{\perp}^{T}A^{-1}X_{\perp})K_Z$ will vary all over the sets %$\{W_Z\in\Rbkk: JW_Z\in \skewset(k)\}$ 
	%$\skewset(k)\ $ 
	$\skewset(k)$ and $\R^{(n-k)\times k},$ respectively. %As a consequence, the condition 
	Therefore, $\gM(N,Z) = 0$ %Hence, by the observation \eqref{eq:eq321}, it follows that
	%\begin{center}
	%$\gM(N,Z) = \tr\left( W_N^{T}JW_Z\right) + \tr\left[ K_N^{T}(X_{\perp}^{T}A^{-1}X_{\perp})K_Z\right] = 0$
	%\end{center}
	for all $Z\in T_X\iStkn$ if and only if %leads to the conditions that %if and only if
	$\tilde{W}_N\in\symset(k)$ and $\tilde{K}_N=0.$ That is, a normal vector must take the form $N =  \Mxa AX\tilde{W}_N$ with $\tilde{W}_N\in\symset(k).$ This completes the proof. %Conversely, it is direct to check that any $N$ having this form verifies the condition $\gM(N,Z) = 0$ for all $Z\in T_X\iStkn$ which completes the proof.
	%It shows that $N = \Mxa AE\left[\begin{matrix}
		%	W_N \\
		%	K_N
		%\end{matrix}\right]\in T_{X}\iStkn^{\perp}_{\gM}$ if and only if $W_N\in\symset(k)$ and $K_N=0.$ That is, 
		%$$N = \Mxa A \left[X \; \; \; A^{-1}X_{\perp}\right]\left[\begin{matrix}
			%W_N \\
			%0
			%\end{matrix}\right] =\Mxa AXW_N,$$ for $W_N\in\symset(k).$ This completes the proof.
		\end{proof}	
		
		%\begin{remark}\label{rem:orthogonalsum}
		% For each $X\in \iSt_{A,J}$, the metric $\gM$ in \eqref{eq:def_gMx} for $T_{X}\iStkn$ can be naturally extended to an inner product on \dvt{$\Rbnk$ as mentioned in sentences below \eqref{eq:def_gMx}}. Moreover, as linear subspaces of $\Rbnk$, it holds \dvt{the direct sum}
		Next, for each $X\in \iSt_{A,J}$, considering $\gM$ as an inner product on the ambient space $\Rbnk$, it holds that
		%For a given inner product $\eta$ on $T_X\iStkn,$ it can be extended to an inner product, still denoted by $\eta,$ on $\Rbnk.$ We now define the projections onto the tangent space $T_X\iStkn$ and the normal space
		%	\begin{center}
			%		$T_{X}\iStkn^{\perp_{\eta}}\coloneqq \left\{N\in\Rbnk: \eta(N,Z) = 0 \text{ for all } Z\in T_{X}\iStkn\right\}$
			%	\end{center}
		%	% $T_X\iStkn^{\perp_{\rho}}=$ 
		%	of $\iStkn$ at $X$ with respect to $\eta,$ respectively. 
		%Since 
		$$\Rbnk = T_X\iStkn\  %\overset{\perp_{\eta}}
		{\bigoplus}\ T_X\iStkn_{\gM}^{\perp}.$$ % \dvt{a direct sum.}
		So, %^{\perp_{\eta}}.$
		%for each $Y\in \Rbnk,$
		there exists a unique pair $(Y_1,Y_2)\in T_X \iStkn\times T_X\iStkn_{\gM}^{\perp}$ for each $Y$ in $ \Rbnk$ %and $Y_2\in T_X\iStkn_{\gM}^{\perp}$
		%\dvtcomm{This change is to cut down the long expression for alignment} 
		such that $Y = Y_1 + Y_2.$ This enables us to define two orthogonal projections 
		\begin{align*}
			\PX^{\Mx}: \Rbnk &\longrightarrow T_{X}\iStkn\\
			Y& \longmapsto \PX^{\Mx}(Y) = Y_1
		\end{align*}
		and 
		\begin{align*}
			\PX^{\perp_{\Mx}}: \Rbnk&\longrightarrow T_{X}\iStkn_{\Mx}^{\perp}\\
			Y&\longmapsto \PX^{\perp_{\Mx}}(Y) = Y_2.
		\end{align*} 
		%\end{remark}
		%For a tractable metric $\gM$ on $\Rbnk,$ 
		The following proposition provides %formulas
		formulae for the two projections above. %the orthogonal projection onto $T_X\iStkn$ can be formulated as follows.
		\begin{proposition}\label{prop:proj_Mx}
			The orthogonal projections of $Y\in\Rbnk$ with respect to metric $\gM$ onto $T_{X}\iStkn$ and $T_{X}\iStkn_{\Mx}^{\perp}$ are
				\begin{equation}\label{eq:orth_projs}
					\PX^{\Mx}(Y) = Y - \Mxa AXU_{X,Y}
					\mbox{ and } \PX^{\perp_{\Mx}}(Y) = \Mxa AXU_{X,Y},
				\end{equation}
				respectively, where $U_{X,Y}$ is the solution to the Lyapunov equation 
			\begin{equation}\label{eq:eqLya_gMx}
				(X^{T}A\Mxa A X)U+U(X^{T}A\Mxa A X) = 2\;\sym(X^{T}AY),
			\end{equation}
			with unknown $U\in\Rbkk.$
		\end{proposition}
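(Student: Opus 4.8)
The plan is to build both projections directly from the orthogonal decomposition $\Rbnk = T_X\iStkn \oplus T_X\iStkn^{\perp}_{\gM}$ together with the explicit form of the normal space already obtained in Proposition~\ref{prop:normalspace_gM}. First I would write the unique splitting $Y = Y_1 + Y_2$ with $Y_1 \in T_X\iStkn$ and $Y_2 \in T_X\iStkn^{\perp}_{\gM}$. By Proposition~\ref{prop:normalspace_gM}, the normal component is forced to have the shape $Y_2 = \Mxa AX U$ for some symmetric $U \in \symset(k)$, so the whole task reduces to pinning down this single matrix $U$; once it is known, the formulae in \eqref{eq:orth_projs} read off as $\PX^{\Mx}(Y) = Y - \Mxa AXU$ and $\PX^{\perp_{\Mx}}(Y) = \Mxa AXU$.

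To identify $U$, I would impose that the tangential part $Y_1 = Y - \Mxa AX U$ satisfies the defining condition \eqref{eq:eq31a}, i.e.\ $Y_1^{T}AX + X^{T}AY_1 = 0$. Substituting and using that $A$ and $\Mxa$ are symmetric (so $(\Mxa AXU)^{T}AX = U^{T}X^{T}A\Mxa AX$), together with $U = U^{T}$, a short computation collapses the cross terms to $X^{T}AY + Y^{T}AX$ and the quadratic terms to $(X^{T}A\Mxa AX)U + U(X^{T}A\Mxa AX)$. Setting the sum to zero produces precisely the Lyapunov equation \eqref{eq:eqLya_gMx}, whose right-hand side is $X^{T}AY + Y^{T}AX = 2\,\sym(X^{T}AY)$.

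The part that needs genuine care — and the main obstacle — is to verify that \eqref{eq:eqLya_gMx} is well posed and, crucially, that its solution is admissible as a normal-space coefficient, that is, symmetric. I would set $C \coloneqq X^{T}A\Mxa AX$ and argue that $C$ is symmetric positive definite: it is visibly symmetric, and for nonzero $v$ one has $v^{T}Cv = (AXv)^{T}\Mxa(AXv) > 0$, since $\Mxa$ is spd and $AXv \neq 0$ — the latter because $A$ is nonsingular and $X$ has full column rank (its Gram matrix $X^{T}AX = J$ is invertible). Hence all eigenvalues of $C$ are positive, no two of them sum to zero, and the Lyapunov operator $U \mapsto CU + UC$ is invertible, giving existence and uniqueness of $U_{X,Y}$.

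Finally, symmetry of $U_{X,Y}$ follows for free. Transposing \eqref{eq:eqLya_gMx} and using that both $C$ and the right-hand side $2\,\sym(X^{T}AY)$ are symmetric shows that $U_{X,Y}^{T}$ solves the very same equation; by the uniqueness just established, $U_{X,Y} = U_{X,Y}^{T} \in \symset(k)$. This confirms that $\Mxa AX U_{X,Y}$ is a legitimate element of $T_X\iStkn^{\perp}_{\gM}$ and that the residual $Y - \Mxa AX U_{X,Y}$ lies in $T_X\iStkn$, which together establish \eqref{eq:orth_projs} and close the argument.
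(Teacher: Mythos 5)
Your proof is correct, but it reaches \eqref{eq:orth_projs} by a different route than the paper. The paper invokes the general projection formula of \cite[Prop.~3.1]{NguyD23}, namely $\PX^{\Mx}(Y) = Y - \Mxa \Df F^*(X)\left( \Df F(X)\Mxa \Df F^*(X)\right)^{-1}\Df F(X)[Y]$, and then computes the operator $\Df F(X)\Mxa \Df F^{*}(X)$ explicitly to recognize the inverse as the solution operator of the Lyapunov equation \eqref{eq:eqLya_gMx}; well-posedness is settled exactly as you do, by noting that $X^{T}A\Mxa AX$ is spd. You instead bypass the cited machinery entirely: you take the ansatz $Y_2=\Mxa AXU$ with $U\in\symset(k)$ supplied by Proposition~\ref{prop:normalspace_gM}, enforce the tangency condition \eqref{eq:eq31a} on the residual $Y-\Mxa AXU$, and let uniqueness of the Lyapunov solution identify $U$ with $U_{X,Y}$. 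Your argument is self-contained and arguably more transparent, since it leans only on results already proved in the paper (the direct-sum decomposition, the normal-space characterization, and the tangent-space description); the paper's version is shorter once the external formula is granted and generalizes mechanically to other constraint maps $F$. One small logical point worth making explicit in your write-up: the symmetric $U$ produced by Proposition~\ref{prop:normalspace_gM} is shown to satisfy \eqref{eq:eqLya_gMx}, and since that equation has a unique solution in all of $\Rbkk$, the symmetry of $U_{X,Y}$ is already forced at that stage — your closing transposition argument is a valid independent check but is not strictly needed.
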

		\begin{proof}
			%\nts{First, because $(X^{T}A\Mxa A X)$ is spd and the right hand side of \eqref{eq:eqLya_gMx} is symmetric, this equation admits a unique symmetric solution, say, $U_{X,Y}$, see, e.g., \cite[Sect. 5]{Simo16} and references therein.} 
			Using \cite[Prop. 3.1]{NguyD23}, we have that
			\begin{equation*}%\label{eq:NgDu_formula}
				\PX^{\Mx}(Y) = Y - \Mxa \Df F^*(X)\left( \Df F(X)\Mxa \Df F^*(X)\right)^{-1}\Df F(X)[Y]. 
			\end{equation*}
			For $U\in\symset(k),$ taking \eqref{eq:eqDF} and \eqref{eq:DF_adjoint2} into account, we obtain %, $\Df F^{*}(X)[U]=2AXU.$ This and \eqref{eq:eqDF} follow that
			\begin{equation*}%\label{eq:Lya_gMx}
				\Df F(X)\Mxa \Df F^{*}(X)[U] = 2X^{T}A\Mxa AXU + 2U X^{T}A\Mxa AX.
			\end{equation*}
			Therefore, $\left( \Df F(X)\Mxa \Df F^*(X)\right)^{-1}\Df F(X)[Y]$ is %nothing else but 
				the solution to the Lyapunov equation
				\begin{equation}\label{eq:orthproj_proof}
					2X^{T}A\Mxa AXU + 2U X^{T}A\Mxa AX =  \Df F(X)[Y] = 2\;\sym(X^{T}AY).
				\end{equation}
				Its existence, uniqueness, and symmetry are ensured by the facts that $(X^{T}A\Mxa A X)$ is spd and the right hand side of  \eqref{eq:orthproj_proof} is symmetric, see  \cite[sect. 5]{Simo16} and references therein. Comparing \eqref{eq:orthproj_proof} to \eqref{eq:eqLya_gMx} %\dvt{, we have} 
				implies 
				that $2\left( \Df F(X)\Mxa \Df F^*(X)\right)^{-1}\Df F(X)[Y] =U_{X,Y}$ %\frac{1}{2}U_{X,Y}$ 
				%which is determined as 
				is the solution to \eqref{eq:eqLya_gMx}. Then, by \eqref{eq:DF_adjoint2} again, we get that
				$$
				\PX^{\Mx}(Y) = Y - \Mxa \Df F^*(X)[\frac{1}{2}U_{X,Y}] = Y - \Mxa AXU_{X,Y}.
				$$
				The second formula in \eqref{eq:orth_projs} holds automatically by definition.
			%If $U_{X,Y}$ is the solution of \eqref{eq:eqLya_gMx}, it is unique \cite[Ch. 12.5]{LancT85} and symmetric (since the right-hand side of the equation is symmetric), and satisfies
			%\begin{center}
			%	$X^{T}A\Mxa AXU_{X,Y} + U_{X,Y} X^{T}A\Mxa AX = 2\sym(X^{T}AY) = \Df F(X)[Y].$
			%\end{center}
			%Hence, if $U=\frac{1}{2}U_{X,Y}\in\symset(k),$ the identity \eqref{eq:Lya_gMx} follows that
			%\begin{equation}\label{eq:eqLya_DFM1DF_star}
			%	\Df F(X)\Mxa \Df F^{*}(X)[\dfrac{1}{2}U_{X,Y}] = \Df F(X)[Y].
			%\end{equation}
			%In Remark \ref{rem:remDF_adjoint}, we have that $\Df F(X)$ is surjective, whereas $\Df F^{*}(X)$ is injective. So, $\Df F(X)\Mxa \Df F^{*}(X)$ is invertible, and \eqref{eq:eqLya_DFM1DF_star} follows that
			%\begin{center}
			%	$\dfrac{1}{2}U_{X,Y} =\left[ \Df F(X)\Mxa \Df F^{*}(X)\right]^{-1} \Df F(X)[Y].$
			%\end{center}
			%From this and \eqref{eq:DF_adjoint2}, we have
			%$$\Mxa AX U_{X,Y} =\Df F^{*}(X)[\dfrac{1}{2}U_{X,Y}]=\Df F^{*}(X)\left[ \Df F(X)\Mxa \Df F^{*}(X)\right]^{-1} \Df F(X)[Y].$$
			%Therefore, \eqref{eq:NgDu_formula} becomes
			%\begin{center}
			%	$\PX^{\Mx}(Y) = Y - \Mxa AX U_{X,Y},$
			%\end{center}
			%where $U_{X,Y}$ is the unique solution to the Lyapunov equation \eqref{eq:eqLya_gMx}.
		\end{proof}
		
		Based on the presented derivation of the orthogonal projection onto the tangent space, we examine  the Riemannian gradient. Let $f:\iStkn\longrightarrow\R$ be a continuously differentiable function. The \textit{Riemannian gradient} of $f$ with respect to the tractable metric $\gM$ is the unique vector $\gradm f(X)$ of $T_X\iStkn$ satisfying 
		\begin{equation*}
			\gM\left( \gradm f(X),Z\right) = \Df\bar{f}(X)[Z]= \langle\nabla \bar{f}(X),Z\rangle_{\Rbnk},\ \forall\ Z\in T_X\iStkn,
		\end{equation*}
		where %$g$ is the Euclidean inner product on $\Rbnk, \ 
		$\bar{f}$ is any smooth extension of $f$ on a neighborhood of $X$ in $\Rbnk,$ and $\nabla \bar{f}(X)$ is the Euclidean gradient of $\bar{f}$ at $X.$ This definition  %\eqref{eq:gradMx} 
			is also used for a function defined on the ambient space $\Rbnk$ when one wants to refer to its gradient associated with a metric other than the conventional Euclidean one. Next, we will obtain an explicit formula for the Riemannian gradient. %The following proposition provides a formula for such Riemannian gradient.
		\begin{proposition} Let $f$ and $\bar{f}$ be functions as aforementioned. We have
			\begin{equation}\label{eq:gradMx_formula}
				\gradm f(X) = \Mxa\nabla\bar{f}(X) - \Mxa AX U_{\bar{f}},%U_{f}
			\end{equation}
			where $U_{\bar{f}}$ %$U_{f}$ 
				is the unique solution to the Lyapunov equation with unknown $U$: %\in\Rbkk$
			\begin{equation*}%\label{eq:Lyapunov_gradMx}
				(X^{T}A\Mxa A X)U+U(X^{T}A\Mxa A X) = 2\sym\left[ X^{T}A\Mxa\nabla\bar{f}(X)\right].
			\end{equation*}
		\end{proposition}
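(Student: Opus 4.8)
The plan is to reduce the claim to the orthogonal projection already computed in Proposition~\ref{prop:proj_Mx}. The guiding observation is that the Riemannian gradient $\gradm f(X)$ coincides with the $\gM$-orthogonal projection onto $T_X\iStkn$ of the \emph{ambient} $\gM$-gradient of $\bar{f}$, viewing $\gM$ as an inner product on all of $\Rbnk$. Thus I would first identify this ambient gradient and then feed it into the projection formula.

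To find the ambient gradient, I would seek $G\in\Rbnk$ with $\gM(G,Z)=\langle\nabla\bar{f}(X),Z\rangle_{\Rbnk}$ for every $Z\in\Rbnk$. Since $\gM(G,Z)=\tr(G^T\Mx Z)$ and $\Mx$, hence $\Mxa$, is symmetric, this amounts to $\tr\bigl((\Mx G-\nabla\bar{f}(X))^TZ\bigr)=0$ for all $Z$, which forces $\Mx G=\nabla\bar{f}(X)$, i.e. $G=\Mxa\nabla\bar{f}(X)$.

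Next I would confirm that $\gradm f(X)=\PX^{\Mx}(G)$. Indeed, for any $Z\in T_X\iStkn$ the orthogonal decomposition $G=\PX^{\Mx}(G)+\PX^{\perp_{\Mx}}(G)$ together with the $\gM$-orthogonality of the normal part gives $\gM(\PX^{\Mx}(G),Z)=\gM(G,Z)=\langle\nabla\bar{f}(X),Z\rangle_{\Rbnk}$; as $\PX^{\Mx}(G)$ lies in the tangent space and such a vector is unique, it must equal $\gradm f(X)$. Applying the projection formula \eqref{eq:orth_projs} with $Y=G=\Mxa\nabla\bar{f}(X)$ then yields $\gradm f(X)=\Mxa\nabla\bar{f}(X)-\Mxa AX\,U$, where $U$ solves the Lyapunov equation \eqref{eq:eqLya_gMx} whose right-hand side becomes $2\sym\bigl(X^TA\Mxa\nabla\bar{f}(X)\bigr)$; renaming $U$ as $U_{\bar{f}}$ gives exactly the stated identity.

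The computation is short once the reduction is in place, so I do not anticipate a genuine obstacle. The only point demanding care is the symmetry of $\Mxa$: it is precisely this that makes the ambient gradient equal $\Mxa\nabla\bar{f}(X)$ and, after substitution into \eqref{eq:eqLya_gMx}, produces the symmetric right-hand side $2\sym(X^TA\Mxa\nabla\bar{f}(X))$ matching the proposition. Everything else is a direct chaining of the metric definition, the projection of Proposition~\ref{prop:proj_Mx}, and the uniqueness of the Riemannian gradient.
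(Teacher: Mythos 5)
Your proposal is correct and follows essentially the same route as the paper: compute the ambient $\gM$-gradient $\Mxa\nabla\bar{f}(X)$ from the metric definition, identify the Riemannian gradient as its $\gM$-orthogonal projection onto the tangent space, and substitute into the projection formula of Proposition~\ref{prop:proj_Mx}. The only cosmetic difference is that you prove the projection identity directly via the orthogonal decomposition, where the paper simply cites \cite[(3.27)]{AbsiMS08}.
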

		\begin{proof}
			In view of \eqref{eq:def_gMx}, the gradient of $\bar{f}$ at $X$ with respect to the metric $\gM$ (extended to $\Rbnk$) is determined by %of $\gM,$ for all $Z\in\Rbnk,$
				\begin{equation*}
					\langle\Mx \gradm \bar{f}(X),Z\rangle_{\Rbnk} = \gM(\gradm \bar{f}(X),Z) =  %\tr\left[ \left( \gradm \bar{f}(X)\right)^{T} \Mx Z \right]\\
					%&= \gM(\gradm \bar{f}(X),Z) = \Df\bar{f}(X)[Z]=
					\langle\nabla \bar{f}(X),Z\rangle_{\Rbnk}\ \forall\  Z\in\Rbnk.
				\end{equation*}
			Hence, $\gradm\bar{f}(X) = \Mxa \nabla \bar{f}(X).$ Taking  \cite[(3.27)]{AbsiMS08} into account, we obtain that 
			%\begin{center} 
			$\gradm f(X) = \PX^{\Mx}\left(\gradm \bar{f}(X)\right) = \PX^{\Mx}\left(\Mxa \nabla \bar{f}(X)\right)$. Finally, in view of \eqref{eq:orth_projs}, we obtain \eqref{eq:gradMx_formula}.
				%\begin{center}
				%$$\gradm f(X) = \PX^{\Mx}\left(\Mxa \nabla \bar{f}(X)\right) =\Mxa\nabla\bar{f}(X) - \Mxa AX U_{f}, $$
				%\end{center}
				%where $U_f$ is the solution to the Lyapunov equation \eqref{eq:Lyapunov_gradMx}. Here, the second identity is obtained when applying Proposition \ref{prop:proj_Mx} for $Y = \Mxa \nabla\bar{f}(X).$
			\end{proof}
			
			In many practical cases, the matrix $\Mx$ representing the tractable metric is chosen to be constant. %For instance, when the Euclidean Hessian of the quadratic cost function $H$ is spd, \nts{one can use $H$ for $\Mx$}. \dvtcomm{Will $H$ be used in this paper?} 
			We will refer to this case as the \emph{weighted Euclidean metric}. 
				When $\Mx$ is further set to be the identity matrix $I_n$, $\gM$ reduces to the popular Euclidean metric. 
				
				A specific choice for the metric that can help avoid having to solve the Lyapunov equation and leads to a different geometric structure and optimization tools will be addressed in an ongoing work.
				
				%In section~\ref{sec:NumerExam}, we will replace $\Mx$ with a particular matrix in the notations presented above. %For the purpose of reference later, we summarize the geometric structure in Table~\ref{tab:Geometry}.}
		\begin{remark}\label{rem:Compare_Metric}
				It turns out that the formulations of the normal space \cite[(3.11)]{ShusA2023} and the two orthogonal projections \cite[Lem. 3.1]{ShusA2023} for the generalized Stiefel manifold hold in a broader context, the indefinite Stiefel manifold, see %Preposition
				Proposition~\ref{prop:normalspace_gM} and \eqref{eq:orth_projs}; cf. \cite[Prop. 1 and Prop. 2]{GSS24}.
		\end{remark}

\section{Cayley retraction} %on $\iStkn$}
\label{sec:Retraction}
In this section, we present a tool % retraction on $\iStkn$ 
which allows us to update a solution to %the optimal point of 
the problem \eqref{eq:opt_prob}
along a given direction in the tangent space $T_{X}\iStkn.$ Namely, we define a retraction \cite{AdleDMMS02,AbsiMS08} on $\iStkn$ to be a smooth mapping $\RX$ from the tangent bundle $T\iStkn\coloneqq \bigcup_{X\in\iStkn}T_X\iStkn$ to $\iStkn$ satisfying the following conditions for all $X\in\iStkn$:
\begin{align*}
\tag{R.1} \; \;\; &\RZ(0) = X, \text{ where } 0 \text{ is the origin of } T_X\iStkn,\label{eq:conditionC1}\\
\tag{R.2} \; \; \;& \restr{\ddt\RZ\left(tZ\right)}{t=0}%\rvert_{t=0} 
= Z \text{ for all } Z\in T_X\iStkn,\label{eq:conditionC2}
\end{align*}
where $\RZ$ is the restriction of $\RX$ to $T_X\iStkn.$

%\subsection{Cayley retraction on $\iStkn$}

In an embedded submanifold, retraction can be formulated based on matrix decomposition \cite[Prop. 4.1.2]{AbsiMS08} or distance  projection \cite[Prop. 5.54]{Boumal23}. In the case of indefinite Stiefel manifold, however, deriving such a retraction does not seem straightforward due to the generality of $A$ and $J$. Here, we will construct a retraction based on the Cayley transform for quadratic Lie group \cite[Chap. IV]{HairLW06}, which will be referred to as the \emph{Cayley retraction}. This type of retraction has been established for the generalized and the symplectic Stiefel manifolds, see \cite{WenY2013,GSAS21,ShusA2023,WangDPY2024} for details. To this end, for $X\in\iStkn$ and $Z\in T_{X}\iStkn,$ let us define the matrix
\begin{equation}\label{eq:2identicalSxz}%\label{eq:SXZ}
	S_{X,Z} = XJZ^{T}AXJX^{T} - XJZ^{T} +ZJX^{T}.
\end{equation}
Taking \eqref{eq:eq31a} into account, it can be rewritten as
\begin{align}\notag
	S_{X,Z} &=\dfrac{1}{2}XJZ^{T}AXJX^{T}- \dfrac{1}{2}XJX^{T}AZJX^{T}- XJZ^{T} +ZJX^{T}\\
	&= \left( I_n - \dfrac{1}{2}XJX^{T}A\right)ZJX^{T} - XJZ^{T}\left( I_n - \dfrac{1}{2}AXJX^{T}\right).
	\label{eq:2identicalSxz_2}
\end{align} 
By definition, $S_{X,Z}$ is skew-symmetric.
%This subsection is devoted to present the retraction on $\iStkn$ constructed on the basis of the Cayley transfom; see \cite[Subsection~5.2]{GSAS21} and \cite[Lemma~8.7]{HairLW06}.
%\begin{remark}
%	If $X\in\iStkn,Y\in T_{X}\iStkn,$ then $S_{X,Y}$ as in \eqref{eq:eqSAX} and one as in \eqref{eq:eq37b} are identical. 
%	
%	Indeed, since $Y\in T_{X}\iStkn, \PX^{\perp}(Y) = 0,$ 
%	$Y = \PX(Y) = Y - \PX^{\perp}(Y) \eqcolon Z.$
%	So, from \eqref{eq:eqSAX} and the proof of Proposition \ref{prop:prop33}, we have $Y = Z = S_{X,Y}AX$ and 
%	\begin{center}
%		$S_{X,Y} = XJY^{T}AXJX^{T} - XJY^{T} +YJX^{T}.$
%	\end{center}
%	Since $S_{X,Y}\in\skewset(n)$ and 
%	\begin{center}
%			$Y^{T}AX = \left( S_{X,Y}AX\right)^{T}AX = X^{T}AS_{X,Y}^{T}AX = - X^{T}AS_{X,Y}AX=-X^{T}AY,$ 
%	\end{center}
%	it follows that
%	\begin{align*}
%		S_{X,Y} &=\dfrac{1}{2}XJY^{T}AXJX^{T}- \dfrac{1}{2}XJX^{T}AYJX^{T}- XJY^{T} +YJX^{T}\\
%		&= \left( I_n - \dfrac{1}{2}XJX^{T}A\right)YJX^{T} - XJY^{T}\left( I_n - \dfrac{1}{2}AXJX^{T}\right).
%	\end{align*}
%	This $S_{X,Y}$ is just identical to one given in \eqref{eq:eq37b}.
%\end{remark}
%From this remark, for any $X\in \iStkn$ and any $ Z\in T_X\iStkn,$ we have
%\begin{equation} \label{eq:2identicalSxz}
%	\begin{aligned}
%		S_{X,Z} &=  \left( I_n - \dfrac{1}{2}XJX^{T}A\right)ZJX^{T} - \left( I_n - \dfrac{1}{2}AXJX^{T}\right) XJZ^{T}\\
%		&= XJZ^{T}AXJX^{T} - XJZ^{T} +ZJX^{T}.
%	\end{aligned}
%\end{equation}
%\ntscomm{Check later if we need the second expression for $S$.} \dvtcomm{Checked!}%Next, we give the definition.% for the Cayley retraction on $\iStkn.$
\begin{definition}\label{def:defRxzcay}
%\nts{Let $X\in\iStkn$ and $Z\in T_{X}\iStkn$} and $S_{X,Z}$ be as in \eqref{eq:2identicalSxz}. 
The Cayley retraction on $\iStkn$ at $X$ is the map %the map %defined as a map
\begin{equation}\label{eq:retractioncay}
	\begin{array}{rcl}
		\RZ^{\cay} : T_X\iStkn\, &\,\longrightarrow\, &\,\iStkn \\
		Z &\longmapsto& \RZ^{\cay}(Z) \coloneqq \left( I_n - \frac{1}{2}S_{X,Z}A\right)^{-1}\left( I_n + \frac{1}{2}S_{X,Z}A\right) X,
	\end{array}
\end{equation}
where $S_{X,Z}$ is as in \eqref{eq:2identicalSxz}. 
%for $X\in\iStkn, \RZ^{\cay}$ is the restriction of $\RZ^{\cay}$ to $T_X\iStkn.$ 
\end{definition} 
It is seen that $\RZ^{\cay}(Z)$ is defined  whenever $\left( I_n - \frac{1}{2}S_{X,Z}A\right)$ is invertible. When applied to an %optimization iteration process, 
iterative optimization scheme, the update is performed 
%The search for the optimal solutions is then processed 
in the direction of the tangent vector $Z\in T_{X}\iStkn$ along the curve 
\begin{equation}\label{eq:Cayley_curve}
Y^{\cay}(t,Z) \coloneqq \left(I_n - \dfrac{t}{2}S_{X,Z}A\right)^{-1}\left( I_n + \dfrac{t}{2}S_{X,Z}A\right) X =: \mathrm{\cay}\left( \dfrac{t}{2}S_{X,Z}A\right)X,
\end{equation}
%$t>0,$ 
for $t\geq 0$, on $\iStkn.$ The following proposition verifies Definition~\ref{def:defRxzcay} for the Cayley retraction. %is in fact a retraction. This is stated as follows.
\begin{proposition}
		\begin{itemize}
\item[i)] For each $X\in\iStkn$, $\RZ^{\cay}$ in \eqref{eq:retractioncay} is well-defined, i.e.,  $\left(I_n-\frac{1}{2}S_{X,Z}A\right)^{-1}\left(I_n+\frac{1}{2}S_{X,Z}A\right)X \in \iStkn$ for any $Z\in T_X\iStkn$. %$\RX^{\mathrm{cay}}:T\iStkn\longrightarrow\Rbnk$ defined as in \eqref{eq:retractioncay} makes sense. That is, given $X\in \iStkn,$ %the restriction of $\RX^{\mathrm{cay}}$ at $X$ given by
%\begin{align*}
%	\RZ^{\mathrm{cay}}:T_X\iStkn&\longrightarrow \Rbnk\\
%		Z&\longmapsto	\RZ^{\mathrm{cay}}(Z) = \left(I_n-\dfrac{1}{2}S_{X,Z}A\right)^{-1}\left(I_n+\dfrac{1}{2}S_{X,Z}A\right)X
%\end{align*}
%satisfies that $\RZ^{\mathrm{cay}}(Z) \in \iStkn$ for all $Z\in T_X\iStkn.$\\
\item[ii)] The map $\RX^{\cay}$ described in \eqref{eq:retractioncay} is a retraction.
\end{itemize}
\end{proposition}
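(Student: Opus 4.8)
The plan is to prove the two claims in sequence, exploiting the connection between the Cayley transform and the quadratic Lie group structure. The key algebraic fact underlying everything is that $X \in \iStkn$ is characterized by $X^TAX = J$, and the Cayley curve is generated by the skew-symmetric matrix $S_{X,Z}A$ acting on $X$; since $A$ is symmetric, $S_{X,Z}A$ is \emph{$A$-skew} in the sense that $(S_{X,Z}A)^TA = AS_{X,Z}^TA = -AS_{X,Z}A = -A(S_{X,Z}A)$, which is exactly the Lie-algebra condition for the quadratic group $\{P : P^TAP = A\}$. This is the structural backbone I would make explicit before anything else.

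For part (i), I would proceed in two steps. First, I verify invertibility of $I_n - \frac{1}{2}S_{X,Z}A$. Because $S_{X,Z}A$ is $A$-skew, its eigenvalues are purely imaginary in the $A$-weighted sense, so $I_n - \frac{1}{2}S_{X,Z}A$ has no zero eigenvalue and is nonsingular; concretely one checks that $(I_n - \frac{1}{2}S_{X,Z}A)v = 0$ forces $v = 0$ by contracting against $A$ and using skew-symmetry of $S_{X,Z}$. Second, to show $\RZ^{\cay}(Z)^T A \RZ^{\cay}(Z) = J$, I set $C = \frac{1}{2}S_{X,Z}A$ and $P = (I_n - C)^{-1}(I_n + C)$ and prove $P^TAP = A$, i.e.\ $P$ lies in the quadratic group. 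This follows from the $A$-skewness of $C$ by the standard Cayley-transform computation: $(I_n + C)^TA(I_n + C) = (I_n - C)^TA(I_n - C)$ because the cross terms $C^TA + AC$ cancel, and then $P^TAP = (I_n+C)^T(I_n-C)^{-T}A(I_n-C)^{-1}(I_n+C) = A$ after inserting this identity. Consequently $\RZ^{\cay}(Z)^TA\RZ^{\cay}(Z) = X^TP^TAPX = X^TAX = J$, giving feasibility.

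For part (ii), I must check the retraction axioms (R.1) and (R.2). Axiom (R.1) is immediate: at $Z = 0$ we have $S_{X,0} = 0$, so $\RZ^{\cay}(0) = (I_n)^{-1}(I_n)X = X$. For (R.2), I differentiate the curve $Y^{\cay}(t,Z) = \cay(\frac{t}{2}S_{X,Z}A)X$ at $t=0$. Using the standard derivative of the Cayley transform, $\restr{\ddt \cay(tC)}{t=0} = 2C$ when evaluated appropriately, or more directly differentiating $(I_n - \frac{t}{2}S_{X,Z}A)Y = (I_n + \frac{t}{2}S_{X,Z}A)X$ implicitly at $t = 0$ (where $Y(0) = X$), I obtain $\dot{Y}(0) = S_{X,Z}AX$. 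The crux is then to show $S_{X,Z}AX = Z$. This is where I would use the defining form \eqref{eq:2identicalSxz}: compute $S_{X,Z}AX = XJZ^TAXJX^TAX - XJZ^TAX + ZJX^TAX$, substitute $X^TAX = J$ and $J^2 = I_k$, and check that the first two terms involving $Z^TAX$ combine with the tangency relation $Z^TAX + X^TAZ = 0$ from \eqref{eq:eq31a} to leave exactly $ZJJ = Z$.

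I expect the main obstacle to be the final verification that $S_{X,Z}AX = Z$, since it requires carefully tracking the repeated factors of $J$ and $X^TAX$ and invoking the tangent-space condition at the right moment; the invertibility and group-membership steps are routine once $A$-skewness of $S_{X,Z}A$ is established, so I would state that property as a preliminary observation and reuse it throughout. Smoothness of $\RX^{\cay}$ on a neighborhood of the zero section, needed for it to qualify as a retraction, follows from smoothness of matrix inversion wherever $I_n - \frac{1}{2}S_{X,Z}A$ is invertible, which holds in particular near $Z = 0$.
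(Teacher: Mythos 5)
Your feasibility argument for part (i) (the quadratic-group identity $P^TAP=A$ via cancellation of the cross terms $C^TA+AC$) and your verification of (R.1) and (R.2) in part (ii) follow essentially the same route as the paper, and the computation $S_{X,Z}AX=Z$ does go through (in fact the two terms involving $Z^TAX$ cancel outright using only $X^TAX=J$ and $J^2=I_k$; the tangency relation \eqref{eq:eq31a} is not needed there). However, there is one genuine error: your claim that $I_n-\frac{1}{2}S_{X,Z}A$ is \emph{always} invertible because the $A$-skewness of $S_{X,Z}A$ forces purely imaginary eigenvalues. That argument only works when $A$ is positive definite: contracting $(S_{X,Z}A)v=\lambda v$ against $A$ gives $(\lambda+\bar\lambda)\,v^*Av=0$, and for indefinite $A$ the quantity $v^*Av$ can vanish for $v\neq 0$, so you cannot conclude $\lambda+\bar\lambda=0$. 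The paper makes exactly this distinction: Proposition~\ref{prop:Cayleyretr_genStiefel} establishes global definedness only for spd $A$, and immediately afterwards an explicit counterexample with $A=\mathrm{diag}(-1,1)$, $J=-1$ exhibits $S_{X,Z}A$ with eigenvalues $\pm 1$, so the Cayley curve is \emph{not} globally defined in the indefinite case.

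The correct reading of ``well-defined'' in part (i), as the paper states right after Definition~\ref{def:defRxzcay}, is that the image lies in $\iStkn$ \emph{whenever} $I_n-\frac{1}{2}S_{X,Z}A$ is invertible; invertibility itself is only guaranteed locally (Lemma~\ref{lem:Cayley_balldef} gives a ball $\|Z\|<\delta$ via a norm estimate $\|S_{X,Z}A\|<1$), and that local statement is what the convergence theory in section~\ref{sec:Algorithm} actually uses. So you should delete the global-invertibility step, keep your feasibility computation conditional on invertibility, and, if you want a quantitative domain of definition, replace the eigenvalue argument with the submultiplicative bound $\|S_{X,Z}A\|\leq\|Z\|\left(\|X\|^3\|J\|^2\|A\|^2+2\|X\|\|J\|\|A\|\right)$. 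The remainder of your proposal, including the smoothness remark near the zero section, is sound.
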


\begin{proof}
i) Given $X\in\iStkn$ and $Z\in T_X\iStkn$, we need to show that $\left( \RZ^{\mathrm{cay}}(Z)\right)^TA\RZ^{\mathrm{cay}}(Z)=J. $ This equality is equivalent to
\begin{align*}
	&X^T(I_n-\tfrac{1}{2}S_{X,Z}A)^{-T}(I_n+\tfrac{1}{2}S_{X,Z}A)^TA(I_n+\tfrac{1}{2}S_{X,Z}A)(I_n-\tfrac{1}{2}S_{X,Z}A)^{-1}X
	&=J
\end{align*}
since  $\left(I_n+B\right)\left(I_n-B\right)^{-1}=\left(I_n-B\right)^{-1}\left(I_n+B\right)$ for any $B\in\Rbnn$ such that $I_n-B$ is invertible. This is in turn a necessary condition for
\[\left(I_n-\dfrac{1}{2}S_{X,Z}A\right)^{-T}\left(I_n+\dfrac{1}{2}S_{X,Z}A\right)^TA\left(I_n+\dfrac{1}{2}S_{X,Z}A\right)\left(I_n-\dfrac{1}{2}S_{X,Z}A\right)^{-1}=A,\]
or 
%\[\left(I_n+\dfrac{1}{2}S_{X,Z}A\right)^TA\left(I_n+\dfrac{1}{2}S_{X,Z}A\right)=\left(I_n-\dfrac{1}{2}S_{X,Z}A\right)^{T}A\left(I_n-\dfrac{1}{2}S_{X,Z}A\right).\]
%Indeed, the last identity is equivalent to
%Direct calculations turns this equality into
\begin{align*}
	A+\dfrac{1}{2}(S_{X,Z}A)^TA+&\dfrac{1}{2}AS_{X,Z}A+\dfrac{1}{4}(S_{X,Z}A)^TA(S_{X,Z}A)\\
	&=A-\dfrac{1}{2}(S_{X,Z}A)^TA-\dfrac{1}{2}AS_{X,Z}A+\dfrac{1}{4}(S_{X,Z}A)^TA(S_{X,Z}A).
\end{align*}
The last equality holds due to the symmetry of $A$ and the skew-symmetry of $S_{X,Z}$.

ii) Using \eqref{eq:2identicalSxz}, it is clear that $S_{X,0_X} = S_{X,0} = 0$ which implies that the condition \eqref{eq:conditionC1}  $\RZ^{\cay}(0)=X$ %That is, the condition \eqref{eq:conditionC1} 
is fulfilled.

Next, we will verify the condition \eqref{eq:conditionC2} given $X\in\iStkn,Z\in T_X\iStkn,$ and $t\in\R$ such that $(I_n-\frac{t}{2}S_{X,Z}A)$ is invertible. Left-multiplying by $(I_n - \frac{t}{2}S_{X,\,Z}A)$ both sides of
\begin{equation*}%\label{retractioncay2}
\RZ^{\mathrm{cay}}(tZ) = 		\left( I_n - \frac{t}{2}S_{X,\,Z}A\right)^{-1}\left( I_n + \frac{t}{2}S_{X,\,Z}A\right) X,
\end{equation*}
taking the derivative, and symplifying the result, we obtain
%Indeed, using left-multiplication by $(I_n-\dfrac{t}{2}S_{X,Z}A)$ on both sides of \eqref{eq:retractioncay} we get 
%	\begin{equation}\label{retractioncay2}
%		\left( I_n - \frac{t}{2}S_{X,\,Z}A\right)\RZ^{\mathrm{cay}}(tZ) = \left( I_n + \frac{t}{2}S_{X,\,Z}A\right) X.
%	\end{equation}
%Taking the derivative with respect to $t$ on both sides of \eqref{retractioncay2} yields
%\begin{equation*}
%- \frac{1}{2}S_{X,\,Z}A\RZ^{\mathrm{cay}}(tZ)	+ \left( I_n - \frac{t}{2}S_{X,\,Z}A\right)\ddt\RZ^{\mathrm{cay}}(tZ) =  \frac{1}{2}S_{X,\,Z}A X.
%\end{equation*}
%Left-multiplying on both sides of this identity by $(I_n-\dfrac{t}{2}S_{X,Z}A)^{-1}$ follows that
\begin{equation} \label{retractioncay3}
	\dfrac{\d}{\d t}\RZ^{\cay}(tZ)  =\left(I_{n}-\dfrac{t}{2}S_{X,Z}A\right)^{-1}\dfrac{1}{2}S_{X,Z}A\left(\RZ^{\cay}(tZ) + X\right).
\end{equation}
Specifically at $t=0,$ %the equation \eqref{retractioncay3} follows that
\begin{align*}
	&\restr{\ddt\RZ^{\mathrm{cay}}(tZ)}{t=0} = \dfrac{1}{2}S_{X,Z}A\left(\RZ^{\mathrm{cay}}(0)+X\right)=S_{X,Z}AX=Z
\end{align*}
which completes the proof.
\end{proof}

The well-definedness of the so-called \emph{Cayley curve} $Y^{\cay}(t,Z)$ in \eqref{eq:Cayley_curve} depends on the fact whether $2/t$ with $t>0$ is an eigenvalue of $S_{X,Z}A$. When $A = I_n,  J = I_k$, i.e., the Stiefel manifold case, 
since $S_{X,Z}A$ is skew-symmetric, it admits only pure imaginary eigenvalues. Therefore, $2/t$ cannot be an eigenvalue of $S_{X,Z}A$ for any $t > 0$. While \cite[Prop. 5.4]{GSAS21} provides a condition for the  well-definedness of the Cayley curve for the symplectic Stiefel manifold, we cannot find such a discussion for the %\dvt{indefinite} 
generalized 
Stiefel manifold in the literature. Therefore, we present it here as a special case of the indefinite Stiefel manifold for completeness.
\begin{proposition}\label{prop:Cayleyretr_genStiefel}
	The \emph{Cayley curve} $Y^{\cay}(t,Z)$ defined in \eqref{eq:Cayley_curve} for the generalized Stiefel manifold, i.e., the manifold $\iSt_{A,J}$ with $A$ is spd and $J = %I$
	I_{k}$, is globally defined.
 \end{proposition}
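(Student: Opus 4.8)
The plan is to reduce the statement to a spectral property of the matrix $S_{X,Z}A$. By the definition of the Cayley curve in \eqref{eq:Cayley_curve}, the point $Y^{\cay}(t,Z)$ fails to exist at a given $t$ precisely when $I_n - \frac{t}{2}S_{X,Z}A$ is singular, i.e., when $1$ is an eigenvalue of $\frac{t}{2}S_{X,Z}A$, equivalently when $\frac{2}{t}$ is an eigenvalue of $S_{X,Z}A$. Since $\frac{2}{t}$ is a positive real number for every $t>0$, it suffices to show that $S_{X,Z}A$ has no positive real eigenvalue. In fact I will establish the stronger fact that all its eigenvalues are purely imaginary.

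The key observation is that, when $A$ is spd, $S_{X,Z}A$ is similar to a real skew-symmetric matrix. Let $A^{1/2}$ denote the unique spd square root of $A$. Conjugating yields
\[
A^{1/2}\left(S_{X,Z}A\right)A^{-1/2} = A^{1/2}S_{X,Z}A^{1/2}.
\]
Because $S_{X,Z}$ is skew-symmetric (as recorded right after \eqref{eq:2identicalSxz_2}) and $A^{1/2}$ is symmetric, the right-hand side satisfies
\[
\left(A^{1/2}S_{X,Z}A^{1/2}\right)^{T} = A^{1/2}S_{X,Z}^{T}A^{1/2} = -A^{1/2}S_{X,Z}A^{1/2},
\]
so it is skew-symmetric. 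Hence $S_{X,Z}A$ shares its spectrum with a real skew-symmetric matrix, whose eigenvalues are all purely imaginary (possibly zero).

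Consequently $\frac{2}{t}$, being real and strictly positive, is never an eigenvalue of $S_{X,Z}A$ for any $t>0$, so $I_n - \frac{t}{2}S_{X,Z}A$ is invertible for all $t \geq 0$ (indeed for all real $t$), and $Y^{\cay}(t,Z)$ is globally defined. There is no real obstacle here: the only point requiring care is invoking the spd square root $A^{1/2}$, which exists precisely because $A$ is spd, the defining hypothesis of the generalized Stiefel case, and then checking that the similarity transform lands among the skew-symmetric matrices. I also note that the argument uses only that $A$ is spd together with the built-in skew-symmetry of $S_{X,Z}$, and does not actually invoke $J = I_k$; the value of $J$ enters the setup only through the form of $S_{X,Z}$ and the constraint $Z \in T_{X}\iStkn$.
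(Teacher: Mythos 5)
Your proof is correct and follows essentially the same strategy as the paper: both reduce the claim to the spectral fact that the product of a skew-symmetric matrix with an spd matrix has only purely imaginary (or zero) eigenvalues, so $2/t$ with $t>0$ can never lie in the spectrum of $S_{X,Z}A$. The only difference is in how that spectral fact is verified --- you use the similarity $A^{1/2}(S_{X,Z}A)A^{-1/2}=A^{1/2}S_{X,Z}A^{1/2}$, which is skew-symmetric, whereas the paper argues directly from $S_{X,Z}Ax=\lambda x$ via the $A$-inner product; both are valid and of comparable generality.
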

\begin{proof}
	As above, we will generally show that the eigenvalues of the product matrix $SA$, where $S$ is skew-symmetric %$S^T=-S$ 
	and $A$ is spd, are either purely imaginary or zero. Indeed, the fact $\lambda\in \Cb$ is an eigenvalue of $SA$ means that there exists an  $x\in \Cbn, x\not= 0,$ such that $SAx = \lambda x$. %which is equivalent to $SGSx = \lambda Sx$ due to the nonsingularity of $S$. 
	Applying the conjugate transpose to both sides and then multiplying from the right with $Ax$, we obtain that $x^*A(-S)Ax = \bar{\lambda}x^*Ax$. By assumption, it readily leads to $-\lambda \|x\|^2_A =\bar{\lambda} \|x\|^2_A$ where $\|\cdot\|^2_A$ is the $A$-norm defined by the spd matrix $A$. This equality implies that if $\lambda\not = 0$ then $\lambda + \bar{\lambda} = 0$ or $\lambda$ is purely imaginary. Hence, $SA$ does not admit $2/t$ with $t>0$ as one of its eigenvalues. %In both cases %Therefore, $2/t$ cannot be an eigenvalue of $SA$ for any $t > 0$ which is the expected statement.
\end{proof}

When $A$ is indefinite, the following counterexample tells us that the Cayley curve $Y^{\cay}(t,Z)$ is generally not globally defined. Indeed, consider the case when $A = \left[\begin{smallmatrix}
	-1&0\\0& 1
\end{smallmatrix}\right], J = -1$. For any $X = \left[\begin{smallmatrix}
	\pm\sqrt{\xi^2 + 1}\\\xi
\end{smallmatrix}\right]$ with $\xi\in \R$ and $Z  = \left[\begin{smallmatrix} \xi\\
	\pm\sqrt{\xi^2 + 1}
\end{smallmatrix}\right] \in T_X\iSt_{A,J}$. Calculation shows that $S_{X,Z}A = \left[\begin{smallmatrix}
	0&1\\1& 0
\end{smallmatrix}\right]$ which admits the eigenvalues $\pm 1$.
%------------------
%\nts{The well-definedness of the so-called \emph{Cayley curve} $Y^{\cay}(t,Z)$ in \eqref{eq:Cayley_curve} depends on the fact whether $2/t$ with $t>0$ is an eigenvalue of $S_{X,Z}A$. When $A = I_n,  J = I_k$, i.e., the Stiefel manifold case, it is readily  checked that $S_{X,Z}A$ is skew-symmetric so that it admits only pure imaginary eigenvalues. Hence in that case, the Cayley retraction is globally defined. We cannot find any such a discussion for the generalized Stiefel manifold. Whereas \cite[Prop. 5.4]{GSAS21} provided a condition for the  well-definedness of the Cayley curve for the symplectic Stiefel manifold. In our case, the following counterexample tells us that the Cayley curve $Y^{cay}(t,Z)$ is not globally defined. Indeed, consider the case when $A = \left[\begin{smallmatrix}-1&0\\0& 1\end{smallmatrix}\right], J = -1$. For any $X = \left[\begin{smallmatrix}\pm\sqrt{\xi^2 + 1}\\\xi \end{smallmatrix}\right]$ with $\xi\in \R$ and $Z  = \left[\begin{smallmatrix} \xi\\\pm\sqrt{\xi^2 + 1}\end{smallmatrix}\right] \in T_X\iSt_{A,J}$. Calculation shows that $S_{X,Z}A = \left[\begin{smallmatrix}0&1\\1& 0\end{smallmatrix}\right]$ which admits the eigenvalues $\pm 1$.} \ntscomm{Tiep: double check this. So we still don't know if the Cayley retraction for generalized Stiefel manifold is globally defined or not; it would be valuable to know.}

Although the Cayley curve is not defined globally, the following feature is enough for establishing the global convergence of the gradient descent algorithm in section~\ref{sec:Algorithm}.
\begin{lemma}\label{lem:Cayley_balldef}
	For any given $X\in \iSt_{A,J}$, there exists $\delta>0$ such that $\RZ^{\cay}(Z)$ is well-defined for all $Z\in T_X\iSt_{A,J}\ :\ \|Z\| < \delta$.
\end{lemma}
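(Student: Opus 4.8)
The plan is to exploit the fact, already recorded in the paragraph following Definition~\ref{def:defRxzcay}, that $\RZ^{\cay}(Z)$ is well-defined precisely when the matrix $I_n-\frac{1}{2}S_{X,Z}A$ is invertible, combined with the observation that the map $Z\mapsto S_{X,Z}$ is linear and vanishes at $Z=0$. Concretely, from the defining expression \eqref{eq:2identicalSxz} one sees that, for fixed $X$, $A$, $J$, the quantity $S_{X,Z}$ is a linear function of $Z$; hence the map $\Phi:T_X\iSt_{A,J}\longrightarrow\Rbnn$ given by $\Phi(Z)=\frac{1}{2}S_{X,Z}A$ is linear, therefore continuous, and satisfies $\Phi(0)=\frac{1}{2}S_{X,0}A=0$.

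First I would invoke the openness of the set $\mathrm{GL}(n)$ of invertible matrices in $\Rbnn$. Since $I_n-\Phi(0)=I_n\in\mathrm{GL}(n)$ and $\Phi$ is continuous, the preimage $\{Z\in T_X\iSt_{A,J}:I_n-\Phi(Z)\in\mathrm{GL}(n)\}$ is an open set containing $0$, and hence contains a ball of some radius $\delta>0$ about the origin of $T_X\iSt_{A,J}$. For every $Z$ in that ball the matrix $I_n-\frac{1}{2}S_{X,Z}A$ is invertible, which by the criterion above is exactly the condition for $\RZ^{\cay}(Z)$ to be well-defined.

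Alternatively, and more constructively, I would bound $\Phi$ directly: by linearity there is a constant $L=L(X,A,J)>0$ with $\|S_{X,Z}A\|\leq L\|Z\|$ for all $Z\in T_X\iSt_{A,J}$. Choosing $\delta=1/L$ then yields $\|\frac{1}{2}S_{X,Z}A\|\leq\frac{1}{2}L\|Z\|<\frac{1}{2}$ whenever $\|Z\|<\delta$, so that $I_n-\frac{1}{2}S_{X,Z}A$ is invertible via the convergent Neumann series $\sum_{j\geq 0}\bigl(\frac{1}{2}S_{X,Z}A\bigr)^{j}$, giving the same conclusion with an explicit radius.

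There is essentially no obstacle in this argument: its entire content is the recognition that $S_{X,Z}$ is linear in $Z$ with $S_{X,0}=0$, after which the statement follows from the openness of $\mathrm{GL}(n)$ (or from the Neumann series). The only point requiring minor care is that the ball is taken inside the finite-dimensional tangent space $T_X\iSt_{A,J}$; since all norms on a finite-dimensional space are equivalent, the particular choice of the norm $\|\cdot\|$ is immaterial to the existence of $\delta$.
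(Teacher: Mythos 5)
Your proposal is correct and follows essentially the same route as the paper: the paper's proof is exactly your second, constructive variant, bounding $\|S_{X,Z}A\|\leq L\|Z\|$ with the explicit constant $L=\|X\|^3\|J\|^2\|A\|^2+2\|X\|\|J\|\|A\|$ read off from \eqref{eq:2identicalSxz} and invoking the Neumann-series invertibility criterion (\cite[Lem. 2.3.3]{GoluV13}) to take $\delta=1/L$. Your first, purely topological argument via the openness of $\mathrm{GL}(n)$ is a softer packaging of the same idea and is equally valid.
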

\begin{proof}
	In view of \cite[Lem. 2.3.3]{GoluV13}, it suffices to show that $\|S_{X,Z}A\| < 1$ with $\|Z\|<\delta$ for some $\delta>0$. We have the following simple estimate
	\begin{align*}
		\|S_{X,Z}A\| &= \|XJZ^TAXJX^TA - XJZ^TA + ZJX^TA\|\\
		& \leq \|Z\|\left(\|X\|^3\|J\|^2\|A\|^2 + 2\|X\|\|J\|\|A\|\right).
	\end{align*}
	Therefore, the statement holds for $\delta = 1/\left(\|X\|^3\|J\|^2\|A\|^2 + 2\|X\|\|J\|\|A\|\right)$.
\end{proof}

The main computational burden for the Cayley retraction $\RX^{\cay}$ on $\iStkn$ is contributed by 
%\nts{relates to} 
%\dvt{the calculation} of 
the term $\left( I_n - \frac{1}{2}S_{X,Z}A\right)^{-1}$ which means solving a linear system of equations whose coefficient matrix is of size $n\times n.$ This task principally requires %a computation of size
$O(n^3)$ flops.
In some cases, $k$ is considerably smaller than $n$ which makes that cost unreasonably high. It is therefore  expected to develop a more efficient alternative. In the same framework as in \cite{OviH23,BendZ21} for the symplectic Stiefel manifold and \cite{WangDPY2024} for the generalized Stiefel manifold, we obtain the following result. %The works of \cite{OviH23} and \cite[Proposition~5.2]{BendZ21} inspire us to construct the methods in which the computational cost is reduced to only inverse a matrix of the size $k\times k.$ These methods are presented in the two next subsections. 
%\subsection{Computation of the Cayley retraction}\label{subsec:computationCayleyretract}
%This subsection aims to present an effective computation to the Cayley retraction which requires only to inverse a matrix of size $k\times k.$ The following proposition and its proof are developed on the basis of the ones in \cite[Proposition~5.2]{BendZ21}.

%For any $C\in\Rbnk,$ we define the matrix $C^{+}\in\mathbb{R}^{k\times n}$ to be $C^{+}\coloneqq JC^{T}A.$ 
\begin{proposition}\label{prop:Rcaycomputation}
%Let 
Given $X\in\iStkn$ and $Z\in T_{X}\iStkn,$  for any $t$ such that $\left( I_n - \frac{t}{2}S_{X,Z}A\right)$ is invertible, we have
\begin{equation}\label{eq:Cayley_econ}
	\RZ^{\cay}(tZ) = -X + \left(t\Lambda +2X \right)  \left( \dfrac{t^2}{4}\Lambda^{+}\Lambda - \dfrac{t}{2}M+I_k\right)^{-1},
\end{equation}
where $C^+: = JC^{T}A$ for any $C\in \Rbnk$, $M = X^{+}Z\in\Rbkk$ and $ \Lambda = Z -XM.$%=Z-XX^{+}Z=Z-XJX^{T}AZ \in\Rbnk.$
\end{proposition}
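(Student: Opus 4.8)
The plan is to convert the $n\times n$ inverse appearing in \eqref{eq:retractioncay} into a single $k\times k$ inverse by exposing the low rank of $S_{X,Z}A$ and invoking the Sherman--Morrison--Woodbury identity. First I would simplify the ``numerator'': a direct computation using $X^TAX=J$ and $J^2=I_k$ gives $S_{X,Z}AX=Z$, so that $\left(I_n+\tfrac{t}{2}S_{X,Z}A\right)X=X+\tfrac{t}{2}Z$ and hence
$$\RZ^{\cay}(tZ)=\left(I_n-\tfrac{t}{2}S_{X,Z}A\right)^{-1}\left(X+\tfrac{t}{2}Z\right).$$
Writing $X^+=JX^TA$ and $\Lambda^+=J\Lambda^TA$, the same constraint yields the orthogonality relations $X^+X=I_k$, $X^+\Lambda=\Lambda^+X=0$ (the last because $X^TA\Lambda=0$, which follows from $\Lambda=Z-XM$ with $M=X^+Z$), and $\Lambda^+Z=\Lambda^+\Lambda$. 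These few identities are the only structural facts that enter the argument.

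The heart of the proof is the factorization
$$S_{X,Z}A=ZX^{+}-X\Lambda^{+},$$
which I would obtain by substituting $JX^TA=X^+$, $JZ^TA=Z^+$ into \eqref{eq:2identicalSxz} to get $S_{X,Z}A=X(Z^+X)X^+-XZ^+ + ZX^+$, then replacing $Z=\Lambda+XM$ and collapsing the three terms with the relations above. This exhibits $S_{X,Z}A=UW$ with $U=[\,Z\ \ X\,]\in\R^{n\times 2k}$ and $W=\left[\begin{smallmatrix}X^+\\ -\Lambda^+\end{smallmatrix}\right]\in\R^{2k\times n}$. Applying the Woodbury identity, $\left(I_n-\tfrac{t}{2}UW\right)^{-1}=I_n+\tfrac{t}{2}U\left(I_{2k}-\tfrac{t}{2}WU\right)^{-1}W$, where the reduced matrix has the explicit block form coming from $WU=\left[\begin{smallmatrix}M & I_k\\ -\Lambda^+\Lambda & 0\end{smallmatrix}\right]$.

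Since the bottom-right block of $I_{2k}-\tfrac{t}{2}WU=\left[\begin{smallmatrix}I_k-\frac{t}{2}M & -\frac{t}{2}I_k\\ \frac{t}{2}\Lambda^+\Lambda & I_k\end{smallmatrix}\right]$ is $I_k$, a Schur-complement reduction collapses its inverse to the single $k\times k$ inverse of $G:=\tfrac{t^2}{4}\Lambda^{+}\Lambda-\tfrac{t}{2}M+I_k$; assembling $U(\cdots)W\left(X+\tfrac{t}{2}Z\right)$ with $W\left(X+\tfrac{t}{2}Z\right)=\left[\begin{smallmatrix}I_k+\frac{t}{2}M\\ -\frac{t}{2}\Lambda^+\Lambda\end{smallmatrix}\right]$ then produces exactly \eqref{eq:Cayley_econ}. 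The invertibility of $G$ needed to write $G^{-1}$ comes for free: Sylvester's determinant identity gives $\det\left(I_n-\tfrac{t}{2}S_{X,Z}A\right)=\det\left(I_{2k}-\tfrac{t}{2}WU\right)=\det G$, so $G$ is invertible precisely when $\left(I_n-\tfrac{t}{2}S_{X,Z}A\right)$ is, which is the standing hypothesis. I expect the main obstacle to be the bookkeeping in the factorization step---verifying $S_{X,Z}A=ZX^{+}-X\Lambda^{+}$ cleanly---together with carrying the Schur reduction so that the $2k\times 2k$ inverse genuinely collapses to $G^{-1}$; once these are in hand, the remainder is substitution. A shorter alternative that sidesteps Woodbury is to verify directly, using $S_{X,Z}A\Lambda=-X\Lambda^{+}\Lambda$ and $S_{X,Z}AX=Z$, that the right-hand side $\tilde{Y}$ of \eqref{eq:Cayley_econ} satisfies $\left(I_n-\tfrac{t}{2}S_{X,Z}A\right)\tilde{Y}=X+\tfrac{t}{2}Z$, whence uniqueness of the solution forces $\tilde{Y}=\RZ^{\cay}(tZ)$; this is more economical but presupposes knowledge of $G$.
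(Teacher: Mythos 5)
Your proof is correct, and at the strategic level it matches the paper's: exhibit $S_{X,Z}A$ as a product of an $n\times 2k$ and a $2k\times n$ matrix, apply Sherman--Morrison--Woodbury, and collapse the resulting $2k\times 2k$ inverse to the single $k\times k$ inverse of $G=\frac{t^2}{4}\Lambda^{+}\Lambda-\frac{t}{2}M+I_k$ via a Schur complement. The difference lies in the factorization, and yours is genuinely different and tidier. The paper starts from the symmetrized form \eqref{eq:2identicalSxz_2} and takes $K=\left[\frac{1}{2}XM+\Lambda\;\;-X\right]$, $N^{T}=\left[\begin{smallmatrix}X^{+}\\ Z^{+}(I_n-\frac{1}{2}XX^{+})\end{smallmatrix}\right]$, so its reduced matrix $N^{T}K=\left[\begin{smallmatrix}\frac{1}{2}M & -I_k\\ \Lambda^{+}\Lambda-\frac{1}{4}M^2 & \frac{1}{2}M\end{smallmatrix}\right]$ has no zero block, and the ensuing inversion and reassembly take considerable bookkeeping, passing through the intermediate identity \eqref{eq:Rcay2}. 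Your factorization $S_{X,Z}A=ZX^{+}-X\Lambda^{+}$ is valid -- the $X(JM^{T}J)X^{+}$ terms cancel identically, and in fact it holds for any $Z\in\Rbnk$, tangent or not -- and it yields $WU=\left[\begin{smallmatrix}M & I_k\\ -\Lambda^{+}\Lambda & 0\end{smallmatrix}\right]$, whose zero lower-right block makes the Schur reduction immediate; pre-simplifying the numerator with $S_{X,Z}AX=Z$ removes most of the remaining algebra. Two genuine improvements over the paper: you justify that $G$ is invertible exactly when $I_n-\frac{t}{2}S_{X,Z}A$ is, via $\det\left(I_n-\frac{t}{2}UW\right)=\det\left(I_{2k}-\frac{t}{2}WU\right)=\det G$, a point the paper uses tacitly when it writes $\Gamma^{-1}$; and your fallback argument -- verifying directly that $\tilde{Y}=-X+(t\Lambda+2X)G^{-1}$ satisfies $\left(I_n-\frac{t}{2}S_{X,Z}A\right)\tilde{Y}=X+\frac{t}{2}Z$ using only $S_{X,Z}AX=Z$ and $S_{X,Z}A\Lambda=-X\Lambda^{+}\Lambda$ -- is the shortest complete proof of the proposition, at the price of presupposing the formula. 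The one thing your streamlined route does not deliver is the intermediate formula \eqref{eq:Rcay2} itself, which the paper treats as a retraction variant in its own right and tests numerically; if that byproduct is wanted, the paper's heavier factorization earns its keep.
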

\begin{proof}
Set $K \coloneqq \left[ \dfrac{1}{2}XM +\Lambda\;\; \;-X\right],\; N^{T} \coloneqq \begin{bmatrix}
	X^{+}\\
	Z^{+}\left( I_n - \dfrac{1}{2}XX^{+}\right) 
\end{bmatrix}. $

From Definition \ref{def:defRxzcay} and \eqref{eq:2identicalSxz_2}, we have
\begin{equation}
	\begin{aligned}\label{eq:eqKNt}
		KN^{T} &= \left[ \left(I_n -\dfrac{1}{2}XX^{+} \right)Z \;\;\;\;-X \right]\begin{bmatrix}
			X^{+}\\
			Z^{+}\left( I_n - \dfrac{1}{2}XX^{+}\right) 
		\end{bmatrix} \\
		& = \left( I_n - \dfrac{1}{2}XX^{+}\right)ZX^{+}-XZ^{+}\left( I_n - \dfrac{1}{2}XX^{+}\right)\\
		& = S_{X,Z}A, 
	\end{aligned}
\end{equation}
and
\begin{equation}
	\begin{aligned}\label{eq:eqNtK}
		N^{T}K &= \begin{bmatrix}
			\dfrac{1}{2}M\;\hspace{1cm}& -I_k\\ 
			& \\
			\dfrac{1}{2}Z^{+}\left(I_n - \dfrac{1}{2}XX^{+} \right)XM+ Z^{+}\left(I_n - \dfrac{1}{2}XX^{+} \right)\Lambda&\;\; \dfrac{1}{2}M%-\dfrac{1}{2}M
		\end{bmatrix}\\
		&= \begin{bmatrix}
			\dfrac{1}{2}M& -I_k\\
			Z^{+}\Lambda - \dfrac{1}{4}M^2&\;\;\dfrac{1}{2}M
		\end{bmatrix}= \begin{bmatrix}
			\dfrac{1}{2}M& -I_k\\
			\Lambda^{+}\Lambda - \dfrac{1}{4}M^2&\;\;\dfrac{1}{2}M
		\end{bmatrix}.
	\end{aligned}
\end{equation}
Here, to get \eqref{eq:eqKNt} and \eqref{eq:eqNtK}, we have used the facts that $X^{+}X =I_k,X^{+}\Lambda = 0$, and $X^{+}Z =- Z^{+}X.$ The last identity in \eqref{eq:eqNtK} is deduced from the equalities
%\begin{center}
$$\Lambda^{+}\Lambda = J(Z^{T} - M^{T}X^{T})A\Lambda =Z^{+}(I_n-XX^{+})\Lambda = Z^{+}\Lambda-Z^{+}XX^{+}\Lambda=Z^{+}\Lambda.$$

By \eqref{eq:retractioncay}, we have
\begin{align}\notag
	\RZ^{\cay}(tZ) &= \left( I_n - \dfrac{t}{2}S_{X,Z}A\right)^{-1}\left( I_n + \dfrac{t}{2}S_{X,Z}A\right) X \\ \notag
	& = \left( I_n - \dfrac{t}{2}KN^{T}\right)^{-1}\left( I_n + \dfrac{t}{2}KN^{T}\right) X \\ \notag
	& =  \left( I_n+\dfrac{t}{2}K\left( I_{2k} -\dfrac{t}{2}N^{T}K\right)^{-1} N^{T}\right)\left( I_n + \dfrac{t}{2}KN^{T}\right) X \\ %\notag
%\end{align*}
%\begin{align}
	& = X + \dfrac{t}{2}KN^{T}X + \dfrac{t}{2}K\left(I_{2k} -\dfrac{t}{2}N^{T}K\right)^{-1}N^T\left(I_n + \dfrac{t}{2}KN^{T}\right)X\nonumber\\
	& = X + \dfrac{t}{2}KN^{T}X
	+ \dfrac{t}{2}K\left( I_{2k} - \dfrac{t}{2}N^{T}K\right)^{-1}\left( I_{2k} +\dfrac{t}{2}N^{T}K\right)N^{T}X\nonumber\\
	& = X + \dfrac{t}{2}KN^{T}X 
	- \dfrac{t}{2}K\left( I_{2k} - \dfrac{t}{2}N^{T}K\right)^{-1}\left( I_{2k} -\dfrac{t}{2}N^{T}K\right)N^{T}X\nonumber\\
	&\hspace{0.8cm}+tK\left( I_{2k} -\dfrac{t}{2}N^{T}K\right)^{-1}N^{T}X\nonumber\\%\\
	%		&= X + \dfrac{t}{2}KN^{T}X - \dfrac{t}{2}KN^{T}X + tK\left( I_{2k} -\dfrac{t}{2}N^{T}K\right)^{-1}N^{T}X\nonumber \\
%\end{align}
%\begin{align}
	& = X+tK\left( I_{2k} -\dfrac{t}{2}N^{T}K\right)^{-1}N^{T}X\nonumber\\
	& =  X+tK\left( I_{2k} -\dfrac{t}{2}N^{T}K\right)^{-1}\begin{bmatrix}
		I_k\\
		-\dfrac{1}{2}M
	\end{bmatrix}.\label{eq:Rcay2}
\end{align}
Here, the second equality follows %is followed
from equation \eqref{eq:eqKNt} meanwhile the third is due to the Sherman-Morrison-Woodbury formula \cite[(2.1.4)]{GoluV13}.

Moreover, a direct inspection %from \eqref{eq:eqNtK} and 
	using \eqref{eq:eqNtK} and the Schur complement of block matrices, see, e.g., \cite{Punta05}, yields that
\begin{align*}%\label{eq:lemproof4_7}
	\left( I_{2k} -\dfrac{t}{2}N^{T}K\right)^{-1} &= \begin{bmatrix}
		I_{k} -\dfrac{t}{4}M&\dfrac{t}{2}I_k\\
		-\dfrac{t}{2}\left( \Lambda^{+}\Lambda-\dfrac{1}{4}M^2\right) &I_k-\dfrac{t}{4}M
	\end{bmatrix}^{-1}\\ %\eqqcolon\begin{bmatrix}
		%	\Omega_1&\Omega_2\\
		%	\Omega_3&\Omega_4
		%	\end{bmatrix},
	&= \begin{bmatrix}
		\Gamma^{-1}\left( I_k - \dfrac{t}{4}M\right)&-\dfrac{t}{2}\Gamma^{-1}\\
		\dfrac{2}{t}I_k - \dfrac{2}{t}\left( I_k - \dfrac{t}{4}M\right)\Gamma^{-1}\left( I_k - \dfrac{t}{4}M\right)&\left( I_k - \dfrac{t}{4}M\right)\Gamma^{-1}
	\end{bmatrix}
\end{align*}
where $\Gamma\coloneqq \left( \dfrac{t^2}{4}\Lambda^{+}\Lambda-\dfrac{t}{2}M+I_k\right)$. 
	Therefore, the second term %product $\left( I_{2k} -\dfrac{t}{2}N^{T}K\right)^{-1}\begin{bmatrix}
		%		I_k\\
		%		-\dfrac{1}{2}M
		%	\end{bmatrix}$ 
	in equation \eqref{eq:Rcay2} becomes
	\begin{align*}
		tK&\begin{bmatrix}
			\Gamma^{-1}\left( I_k - \dfrac{t}{4}M\right)&-\dfrac{t}{2}\Gamma^{-1}\\
			\dfrac{2}{t}I_k - \dfrac{2}{t}\left( I_k - \dfrac{t}{4}M\right)\Gamma^{-1}\left( I_k - \dfrac{t}{4}M\right)&\left( I_k - \dfrac{t}{4}M\right)\Gamma^{-1}
		\end{bmatrix}\begin{bmatrix}
			I_k\\
			-\dfrac{1}{2}M
		\end{bmatrix}\\
		&=tK \begin{bmatrix}
			\Gamma^{-1}\left( I_k - \dfrac{t}{4}M\right)+\dfrac{t}{4}\Gamma^{-1}M\\
			\dfrac{2}{t}I_k - \dfrac{2}{t}\left( I_k - \dfrac{t}{4}M\right)\Gamma^{-1}\left( I_k - \dfrac{t}{4}M\right)-\dfrac{1}{2}\left( I_k - \dfrac{t}{4}M\right)\Gamma^{-1}M
		\end{bmatrix}\\
		&=tK\begin{bmatrix}
			\Gamma^{-1}\\
			\dfrac{2}{t}I_k - \dfrac{2}{t}\left( I_k - \dfrac{t}{4}M\right)\Gamma^{-1}\left[ \left( I_k - \dfrac{t}{4}M\right)+\dfrac{t}{4}M\right] \end{bmatrix}\\
		& =tK\begin{bmatrix}
			\Gamma^{-1}\\
			\dfrac{2}{t}I_k - \dfrac{2}{t}\left( I_k - \dfrac{t}{4}M\right)\Gamma^{-1} 
		\end{bmatrix}\\
			%\end{align*}
%\begin{align*}
		& = \dfrac{t}{2}XM\Gamma^{-1}+t\Lambda\Gamma^{-1} - 2X+2X\left( I_k - \dfrac{t}{4}M\right)\Gamma^{-1}\\
		& = -2X +(t\Lambda +2X)\Gamma^{-1}.
	\end{align*}
	%	Hence, the second term on the right-hand side of the equation \eqref{eq:Rcay2} becomes
	%		$tK \begin{bmatrix}
		%			\Gamma^{-1}\\
		%			\dfrac{2}{t}I_k - \dfrac{2}{t}\left( I_k - \dfrac{t}{4}M\right)\Gamma^{-1} 
		%		\end{bmatrix} = \dfrac{t}{2}XM\Gamma^{-1}+t\Lambda\Gamma^{-1} - 2X+2X\left( I_k - \dfrac{t}{4}M\right)\Gamma^{-1}$\\
	%		$ = -2X +(t\Lambda +2X)\Gamma^{-1}.$ 
	Hence, \eqref{eq:Rcay2} reduces to
	\begin{equation*}
		\RZ^{\cay}(tZ)  = -X+(t\Lambda +2X)\Gamma^{-1} = -X +(t\Lambda +2X)\left( \dfrac{t^2}{4}\Lambda^{+}\Lambda - \dfrac{t}{2}M+I_k\right)^{-1}
	\end{equation*}
	which also completes the proof.
\end{proof}
%\ntscomm{In the final version, break groups of formulae so that the vertical space becomes normal.}
\begin{remark}\label{rem:Compare_Cayley}
		The curve \eqref{eq:Cayley_curve} is a generalization of \cite[(7)]{WenY2013} for the Stiefel manifold and \cite[(3.9)]{ShusA2023} for the generalized Stiefel manifold; cf. \cite[(5.4)]{GSAS21} for the symplectic Stiefel manifold. The intermediate formula \eqref{eq:Rcay2} reduces to \cite[(3.5)]{WangDPY2024} and \cite[(19)]{WenY2013} with the corresponding restrictions on $A$ and $J$; cf. \cite[(5.7)]{GSAS21}. Finally, the most efficient formula \eqref{eq:Cayley_econ} which involves inversing matrix of size $k\times k$ only cannot be found in the literature for generalized Stiefel manifold; cf. \cite[(17)]{OviH23} and \cite[(5.2)]{BendZ21} for the case of symplectic Stiefel manifold. 
\end{remark}
%\ntscomm{Relate the results on Cayley retraction here to that of \cite{WangDPY2024,ShusA2023}, especially show that our result is a nontrivial extension of theirs.}
	\begin{remark}
		The economical formula \eqref{eq:Cayley_econ} for the Cayley retraction can also be derived as a special case of Jiang-Dai's family of retraction proposed in \cite{JianD15} in the same framework as proceeded in \cite{OviH23}.
\end{remark}

To close this section, we discuss the possibility that the Cayley retraction \eqref{eq:retractioncay} is of second order. 
It can be interpreted from \cite[Def. 5.42]{Boumal23} that a retraction $\mathcal{R}$ is of second order if, for any $X$ and $Z\in T_X\iSt_{A,J}$, the second derivative of its curve $\frac{\d^2}{dt^2}(\RZ(tZ))$ belongs to the normal space  $T_X\iSt_{A,J}^{\perp}$. Second-order retraction helps compute the Riemannian Hessian of the cost function at any point without invoking the Riemannian connection \cite[Prop. 5.45]{Boumal23}. %With \eqref{eq:Cayley_curve} in mind, direct calculation shows that $$\dfrac{\d}{\d t}\left(I_n-\dfrac{t}{2}S_{X,Z}A\right)^{-1} = \left(I_{n}-\dfrac{t}{2}S_{X,Z}A\right)^{-1}\dfrac{1}{2}S_{X,Z}A\left(I_{n} - \dfrac{t}{2}S_{X,Z}A\right)^{-1}.$$ 
%This leads to 
%\begin{equation*}
%	\dfrac{\d}{\d t}\RZ^{\cay}(tZ)  =\left(I_{n}-\dfrac{t}{2}S_{X,Z}A\right)^{-1}\dfrac{1}{2}S_{X,Z}A\left(\RZ^{\cay}(tZ) + X\right).
%\end{equation*}
%Proceeding to differentiate 
Differentiating the identity \eqref{retractioncay3} yields
$$
\dfrac{\d^2}{\d t^2}\RZ^{\cay}(tZ) = 2 \left(\left(I_{n}-\dfrac{t}{2}S_{X,Z}A\right)^{-1}\dfrac{1}{2}S_{X,Z}A\right)^2\left(\RZ^{\cay}(tZ) + X\right).
$$
Particularly at $t=0$, from \eqref{eq:2identicalSxz} and the fact $S_{X,Z}AX=Z$, %\eqref{eq:SAX_eq_Z}, 
we have
	\begin{align*}%\notag
		\left.\dfrac{\d^2}{\d t^2}\RZ^{\cay}(tZ)\right|_{t=0} &= S_{X,Z}AS_{X,Z}AX =S_{X,Z}AZ\\ 
		&=\left(XJZ^{T}AXJX^{T} - XJZ^{T} +ZJX^{T}\right)AZ.
		%\label{eq:d2Retr}
	\end{align*}

With the example below, we point out that
$\left.\frac{\d^2}{\d t^2}\RZ^{\cay}(tZ)\right|_{t=0}$ is generally not a  normal vector at $X$ which in turn implies that the Cayley retraction is not of second order. Let us consider a point $X\in \iSt_{A,J}(2,3)$ with
%In general, with respect to the Euclidean metric $\gM = \langle.,.\rangle_{\Rbnk}$ (that is, $\Mx=I_n$), \eqref{eq:d2Retr} is not a normal vector which means that the Cayley retraction is not a second-order retraction by definition, see \cite{AbsiM12}. In fact, by using Proposition \ref{prop:normalspace_gM}, it can be proved that there always exist $X\in\iStkn$ and $Z\in T_X\iStkn$ such that \eqref{eq:d2Retr} is not in the normal space $T_X\iStkn_{\langle.,.\rangle_{\Rbnk}}^{\perp}.$ That is, the Cayley retraction is not of second order. Here, we illustrate this fact with an indefinite Stiefel manifold in $\mathbb{R}^{3\times 2}$ as follows. Let $n=3,k=2,$ and 
\begin{equation*}
  A = \begin{bmatrix}\begin{array}{ccc}
  	\dfrac{-7}{3} & \dfrac{-2}{3} & \dfrac{4}{3}\\[0.8em]
  	\dfrac{-2}{3} & \dfrac{-23}{15} & \dfrac{-14}{15}\\[0.8em] %\vspace{12pt}
  	\dfrac{4}{3} & \dfrac{-14}{15} & \dfrac{-2}{15}
	\end{array}
	\end{bmatrix},\; J=\begin{bmatrix}
	0&-1\\
	-1&0
	\end{bmatrix},\; X =  \begin{bmatrix}\begin{array}{cc}
		\dfrac{\sqrt{5}-\sqrt{3}}{6} & \dfrac{\sqrt{5}+\sqrt{3}}{6}\\[0.8em]
		\dfrac{\sqrt{5}+5\sqrt{3}}{30} & \dfrac{\sqrt{5}-5\sqrt{3}}{30}\\[0.8em] 
		-\dfrac{\sqrt{5}+5\sqrt{3}}{15} & \dfrac{-\sqrt{5}+5\sqrt{3}}{15} 
	\end{array}
	\end{bmatrix}.
\end{equation*}
%Then $\iSt_{A,J}$ is an indefinte Stiefel manifold and $X\in\iSt_{A,J}.$ 
It follows from \eqref{eq:eq31b} that, for $W= \left[\begin{smallmatrix}
	1&0\\
	0&-1
\end{smallmatrix}\right],\,X_{\perp}=\begin{bmatrix}
0&2&1
\end{bmatrix}^T, \text{ and } K=\begin{bmatrix}
1&0 \end{bmatrix},$ 
\begin{equation*}
	Z \coloneqq XW + A^{-1}X_{\perp}K =  \begin{bmatrix}\begin{array}{cc}
		\dfrac{\sqrt{5}-\sqrt{3}}{6} & -\dfrac{\sqrt{5}+\sqrt{3}}{6}\\[1em]
		\dfrac{\sqrt{5}+5\sqrt{3}-30}{30} & \dfrac{-\sqrt{5}+5\sqrt{3}}{30}\\[1em] 
		-\dfrac{2\sqrt{5}+10\sqrt{3}+15}{30}& \dfrac{\sqrt{5}-5\sqrt{3}}{15} 
	\end{array}
	\end{bmatrix}\in T_X\iSt_{A,J}.
	\end{equation*}
Therefore,
\begin{equation*}
	S_{X,Z} AZ= \begin{bmatrix}\begin{array}{cc}
			\dfrac{-3\sqrt{5}-7\sqrt{3}}{12} & \dfrac{\sqrt{5}+\sqrt{3}}{6}\\[1em]
			\dfrac{-3\sqrt{5}+35\sqrt{3}-60}{60} & \dfrac{\sqrt{5}-5\sqrt{3}}{30}\\[1em] 
			\dfrac{3\sqrt{5}-35\sqrt{3}-15}{30}& \dfrac{-\sqrt{5}+5\sqrt{3}}{15} 
		\end{array}
		\end{bmatrix}.
\end{equation*}
Since $JX^TS_{X,Z} AZ= \left[\begin{smallmatrix}
		2&-2/3\\
		-3/2&1/3
\end{smallmatrix}\right]$ is not symmetric, by contradiction it results in the fact that $S_{X,Z}AZ\neq AX\hat{W},$ for all $\hat{W}\in\symset(2).$ That is, in view of Proposition~\ref{prop:normalspace_gM}, %It follows from Proposition \ref{prop:normalspace_gM} that 
$S_{X,Z}AZ$ does not belong to the normal space $T_X\iSt_{A,J}^{\perp}$ with respect to the Euclidean metric. %on $\mathbb{R}^{3\times 2},$ which shows that $\RX^{\cay}$ is not a second-order retraction.

\section{A gradient descent method}\label{sec:Algorithm}
As the necessary tools are available, we are now ready to present a Riemannian gradient descent method for solving the problem \eqref{eq:opt_prob}. Starting with an $X_0\in \iSt_{A,J}$, this method generates a sequence $\{X_j\}$ using a line search along the direction $-\gradm f(X_j)$ and updates the solution as 
$$X_{j+1} = \RZi^{\cay}(-\tau_j \gradm f(X_j)),$$ 
where $\tau_j >0$ is a carefully chosen step size. The choice for step size undoubtedly has a great impact on the method's performance. % and in fact discriminates line search methods. 
Here, we adopt a combination of a %non-monotone 
nonmonotone line search technique \cite{ZhanH2004} with the alternating Barzilai-Borwein \cite{BarB88}  which has been adapted to Riemannian optimization in \cite{IannP18} and simplified in \cite{HuLWY2020} for embedded submanifolds, see Algorithm~\ref{alg:non-monotone gradient} for details and \cite{GSAS21} for more choices of step size. One can see from \eqref{eq:nonmonotone_cond} that if $\alpha$ is chosen to be zero, this scheme reduces to the popular monotonically decreasing line search with Amijo's rule.

\begin{algorithm}[htbp]
	\caption{Riemannian gradient algorithm for the optimization problem~\eqref{eq:opt_prob}}
	\label{alg:non-monotone gradient}
	\begin{algorithmic}[1]
		\REQUIRE The cost function $f$, %metric $g$, and retraction $\mathcal{R}$ on $\Spkn$, 
		initial guess $X_0\in\iSt_{A,J}$, $\gamma_0>0$,
		\mbox{$0<\gamma_{\min}<\gamma_{\max}$}, 
		$\beta, \delta\in(0,1)$, $\alpha \in [0,1]$, 
		$q_0=1$, $c_0 = f(X_0)$. 
		\ENSURE Sequence of iterates  $\{X_j\}$. 
		\FOR{$j=0,1,2,\dots$}
		\STATE Compute $Z_j = -\gradm f(X_j)$. 
		\IF{$j>0$} 
		\STATE{ 
			$
			\gamma_j=\left\{\begin{array}{ll}
				\dfrac{\langle W_{j-1},W_{j-1}\rangle}{\abs{\tr(W_{j-1}^T Y_{j-1}^{})}} 
				&\text{for odd } j, \\[5mm]
				\dfrac{\abs{\tr(W_{j-1}^T Y_{j-1}^{})}}{\langle Y_{j-1},Y_{j-1}\rangle}
				&\text{for even } j,
			\end{array}\right.
			$\\
			where $W_{j-1} = X_j - X_{j-1}$ and $Y_{j-1} =Z_j-Z_{j-1}$.
		}
		\ENDIF
		\STATE Calculate the trial step size $\gamma_j=\max\bigl(\gamma_{\min},\min(\gamma_j,\gamma_{\max})\bigr)$.			
		\STATE Find the smallest integer $\ell$  such that the %non-monotone 
		nonmonotone condition 
		\begin{equation}\label{eq:nonmonotone_cond}
			f\big(\RZi^{\cay}(%\tau_i 
			\tau_j Z_j)\big) \le c_j  + \beta\, \tau_j\, \gM\big(\gradm f(X_j), Z_j\big)
		\end{equation} %\dvtcomm{$c_m$ should be replaced by $c_j$}
		holds, where $\tau_j=\gamma_j\, \delta^{\ell}$. 
		\STATE Set $X_{j+1} = \RZi^{\cay}(\tau_j Z_j)$.
		\STATE Update $q_{j+1} = \alpha q_{j} + 1$ and
		$\displaystyle{c_{j+1} = \frac{\alpha q_{j}}{q_{j+1}} c_{j}  + \frac{1}{q_{j+1}} f(X_{j+1})}$.
		\ENDFOR
	\end{algorithmic}
\end{algorithm}

Given Lemma~\ref{lem:Cayley_balldef}, according to \cite[Theorem~5.7]{GSAS21}, the global convergence of Algorithm~\ref{alg:non-monotone gradient} is guaranteed. Namely, regardless of the initial guess $X_0$, every accumulation point~$X_*$ of the sequence~%$\{X_i\}$ 
$\{X_j\}$
generated by Algorithm~\ref{alg:non-monotone gradient} is a~critical point of the cost function~$f$ in~\eqref{eq:opt_prob}, i.e., $\gradm f(X_*)=0$. The material presented in this section is theoretically known. Nevertheless, to the best of our knowledge, it is the first algorithmic development for Riemannian optimization on the indefinite Stiefel manifold. Despite of simplicity, %simpleness,
the algorithm will prove its efficiency via numerical tests in the next section.%This fact will be illustrated by numerical tests shown in the next section.

To finish this section, we give a rough estimate for the computational complexity of Algorithm~\ref{alg:non-monotone gradient}. To this end, we provide in Table~\ref{tab:ComplxEst} the complexity of its main steps. %\footnote{The report is based on the MATLAB code rather than the text given which might be slightly different.}. 
In the computation of the Riemannian gradient, if we choose the Euclidean metric, the term $\frac{4}{3}n^3$ will disappear, but it does not necessarily result in a shorter execution time since the total runtime also depends on the number of iterations \texttt{niter} and the number of the backtracking line searches \texttt{nls} in each of them. If $A$ is sparse, e.g., diagonal, the $4n^2k$ becomes $O(n)$. Meanwhile, the retraction formula \eqref{eq:Cayley_curve} always costs $O(n^3)$ and is obviously more expensive than \eqref{eq:Rcay2} and \eqref{eq:Cayley_econ}. From this table, one can give a rough statement on the cost of Algorithm~\ref{alg:non-monotone gradient}. For instance, using the retraction \eqref{eq:Cayley_econ}, the total cost is 
\begin{align}
\begin{split}
\label{eq:roughcost}
&\texttt{niter}\left[\left(\frac{4}{3}n^3 + 4n^2k +8nk^2 -2nk + 18k^3\right) + 6nk\right.\\ &\left. \qquad+ (8nk^2 -nk + 4k^3 - 4k^2)+ \texttt{nls}\left(nk^2 + 4nk +\frac{2}{3}k^3 + 4k^2 + \texttt{fval}\right)\right],
\end{split}
	\end{align}
where \texttt{fval} is the cost to evaluate the cost function.

\begin{table}[h!]
	\centering
	\caption{Computational complexity estimates for some main steps.\label{tab:ComplxEst}}
	%\footnotesize
	\begin{tabular}{ p{6cm}  p{6cm} } % {l l}
		%	& & & & & & &\\%[-3mm]
		\toprule
		Computation step & complexity \\ \midrule 
		$\gradm f(X_j)$, given $\nabla f(X_j)$ & $\frac{4}{3}n^3 + 4n^2k +8nk^2 -2nk + 18k^3$ \\
		$\gamma_j$ in Algorithm~\ref{alg:non-monotone gradient}& $6nk$ \\
		Retraction \eqref{eq:Cayley_curve}: preparation & $2n^3 + 6n^2k +nk$\\
		Retraction \eqref{eq:Cayley_curve}: backtracking loop & $\frac{2}{3}n^3 + n^2k$\\
		Retraction \eqref{eq:Rcay2}: preparation & $10nk^2 -nk + 6k^3 -3k^2$ \\
		Retraction \eqref{eq:Rcay2}: backtracking loop & $2nk^2 + nk + \frac{28}{3}k^3 + 8k^2$ \\
		Retraction \eqref{eq:Cayley_econ}: preparation & $8nk^2 -nk + 4k^3 -4k^2$\\
		Retraction \eqref{eq:Cayley_econ}:  backtracking loop & $nk^2 + 4nk +\frac{2}{3}k^3 + 4k^2$\\
		
		\bottomrule
	\end{tabular}
\end{table}

%\nts{Before moving on to the next section, we would like to note that a Riemannian conjugate gradient method for the indefinite Stiefel manifold can be constructed with a vector transport, a mapping that moves a tangent vector at a point to a tangent space at another point in a proper way. Such transports can be established by different ways such as orthogonally projecting, differentiating, and inversing the Cayley retraction \cite{AbsiMS08,ZhuS20}. Moreover, various conjugate gradient schemes for optimization have been proposed and analyzed \cite{Sato22}; they are all worth to implemented and numerically compared on $\iStkn$. These will be addressed in a separated work.}

\section{Numerical examples}\label{sec:NumerExam}
In this section, we run several examples to verify the theoretical results obtained previously and to better understand the overall numerical behavior of the method. First, the parameters in Algorithm~\ref{alg:non-monotone gradient} need to be set up. Most of them are fixed for all examples. %if not otherwise stated. 
Namely, for the backtracking search, we set $\beta = 1\e{-4}$, $\delta = 5\e{-1}$, $\gamma_0 = 1\e{-3}$, % (if it is not otherwise specified), 
$\gamma_{\min} = 1\e{-15}$, $\gamma_{\max} =1\e{+5}$.  Moreover, $\alpha = 0.85$ is used in the %non-monotone.  
nonmonotone condition. %except for the maximal number of iterations $\texttt{niter}$ and 
The stopping tolerance $\texttt{rstop}$, %that
which indicates a convergence when $\|\gradm f(X_j)\|_{X_j}\leq\texttt{rstop}\|\gradm f(X_0)\|_{X_0}$, is usually set to be either $1\e{-9}$ or $1\e{-6}$ upon example.

For every test, 
if the Euclidean Hessian of the extended cost function $\nabla^2 \bar{f}(X)$ is positive-definite, 
we will take it to define the tractable metric as an adoption of the suggestion in \cite{DongGGG2022,GaoPY2024}. % namely, \dvt{, namely} $\Mx = \nabla^2 f(X)$. %, i.e., a constant matrix when $f$ is a quadratic trace funcion. 
The hint from \cite{ShusA2023} %which employs $A$ does not apply here 
employing $A$ is not applied here because $A$ in the context of this paper is generally not spd. %An exception is the first 
We will examine three schemes for the Cayley retraction to see the difference in efficiency and feasibility. We will also compare our proposed method with the augmented Lagrangian method (ALM) \cite[Framework 17.3]{NoceW06}. %or \cite[sect. 10.4]{SunYuan2006}. 
This method has recently been reported to deliver good results when applied to the optimization problem under the symplecticity constraint \cite{OviedoLara2025}. Specifically applied to our present problem, at step $j$ of the main iteration, the method approximately solves the subproblem with the stopping criterion $\|X_j^TAX_j-J\|_F/(j + 10)$ using the spectral gradient method as in \cite{OviedoLara2025}\footnote{In fact, we adapt the MATLAB code, provided by the authors at \href{https://www.mathworks.com/matlabcentral/fileexchange/181008}{https://www.mathworks.com/matlabcentral/fileexchange/181008}, to our problem.}. After this step, we update the multiplier and penalty parameter as $\Lambda_{j+1} = \Lambda_{j}+\mu_j(X_{j+1}^TAX_{j+1}-J)
$ and
$$
\mu_{j+1} = \left\{ \begin{array}{ll}
	&\mu_j,\mbox{ if } \|X_{j+1}^TAX_{j+1}-J\|_F\leq \tau\|X_j^TAX_j-J\|_F\mbox{ or } \mu_j\geq 1\e+14,\\
	&\rho\mu_{j},
\end{array}\right.
$$
respectively, where $\tau\in (0,1), \rho > 1$. Usually, we initialize the parameters as: Lagrange multiplier $\Lambda_0 = 0, \mu_0 = 1$, and $X_0$ is the normalized matrix of 1's and set $\tau = 0.25, \rho = 10$ as recommended in  \cite[sect. 10.4]{SunYuan2006}. We stop the main iteration when the Euclidean norm of the associated Lagrangian function meets the relative convergence condition, specified by $\texttt{rstop}_{ALM}$, in the same manner as the proposed Riemannian gradient descent (RGD) method and the feasibility violation does not exceed the tolerance \texttt{feastol}. %More details can be found in the provided code. 
Additionally, we also point out that the step size strategy described in Algorithm~\ref{alg:non-monotone gradient} is helpful by comparing the result with that using a fixed step size. 
The numerical experiments are performed on a laptop with Intel(R) Core(TM) i7-4500U (at 1.8GHz, 5MB Cache, 8GB RAM) running MATLAB R2023a under Windows 10 Home.  The main code is made available at \href{https://sites.google.com/view/ntson/code}{https://sites.google.com/view/ntson/code}.

%The code including different optimization schemes is available from \href{https://github.com/opt-gaobin/spopt}{https://github.com/opt-gaobin/spopt}. 

%As our model covers the Stiefel manifold as a special case. This case, as discussed previously, has been well developed and therefore, we will not go further on that direction. The first application is about that of optimization method on generalized Stiefel manifolds. The second one which, to the best of our knowledge, has not been discussed in the literature, is the case where $A$ is indefinite.

\subsection{Trace minimization for positive-definite matrix pencil}\label{ssec:Example_Tracemin}
We consider in this section the minimization 
problem \eqref{eq:opt_prob} where 
$f(X) = \tr\left(X^TMX\right)$, \mbox{$M\in \SPD(n)$}, $A\in \symset(n)$ is nonsingular with  $\irm_+(A)=p, \irm_-(A) = m$, and  
$p+m = n$, $J = \diag(I_{k_p},-I_{k_m})$ and $k_p+k_m = k, k_p\leq p, k_m \leq m$. %,  for a solution $X_*$. 
In view of \cite[Thm. 3.5]{KovaV95}, recalling \eqref{eq:KovacStrikoVeselic}, a global  solution $X_*$ can be used to compute $k_p$ positive and $k_m$ negative eigenvalues having the smallest magnitude of the matrix pencil $(M,A)$. %In deed,
Indeed, it follows that $MX_* = AX_*\Lambda$ where $\Lambda = \diag(\Lambda_+,\Lambda_-)$ is determined as in the theorem mentioned. This equality leads to $X_*^TMX_* = \diag(\Lambda_+,-\Lambda_-)$. Finally, the sought eigenvalues can be determined as that of the $k_p\times k_p$ matrix $\Lambda_+$ and $k_m\times k_m$ matrix $\Lambda_-$. In each test, we present the value of the objective function (obj.), the norm in the chosen metric of the Riemannian gradient (grad.), the relative error in computing the mentioned eigenvalues (eig. rel. err.) %$\|MV - AVD\|_F/AVD\|_F$
$\|MV - AVD\|_F/\|AVD\|_F$ in which %where
$V$ is a matrix of (generalized) eigenvectors associated with the computed eigenvalues %,listed
listed in the diagonal of the diagonal matrix $D$, the feasibility (feas.) $\|X_*^TAX_*-J\|_F$, the number of iterations ($\#$iter.), the number of function evaluations ($\#$eval.), and the CPU time (CPU) needed to obtain $X_*$. 

In the first context, we choose the Lehmer matrix of size $n = 200$ as the matrix $M$, and set $A = \diag(1,\ldots,p,-m,\ldots,-1)$  %is the diagonal matrix 
with $p=150$ and $m=50$. %negative diagonal entries. %We consider two sets $(k,k_p,k_m) = (5,3,2)$ and $(20,15,5)$ for each of which we 
We consider both Euclidean and weighted Euclidean metrics for two triplets $(k,k_p,k_m) = (5,3,2)$ and $(20,15,5)$ which results in four instances. With \texttt{rstop}$=1\e-9$, the obtained numerical results, noted as RGD, are given in Table~\ref{tab:tracemin_500}. It is obvious to see that using an appropriately chosen metric can significantly speed up the iterative scheme. Surprisingly, it also helps %reducing 
reduce the error of eigenvalue and eigenvector computations. In the forthcoming tests, we only use an individually chosen weighted Euclidean metric.  We also run the ALM for this model with $\texttt{rstop}_{ALM} = 1\e-7$ and $\texttt{feastol} = 1\e-10$. This specific convergence condition and the maximal numbers of the main and inner iterations, 10 and 5000, respectively, are set so that the accuracy of the obtained result can be comparable to that of the RGD method. Note that the norm of the Riemannian gradient of the cost function, which is indeed quite the same as the norm of the gradient of the Lagrangian function, is displayed to make the comparison appropriate.

\begin{table}[h!]
	\centering
	\caption{Trace minimization tests for positive-definite matrix pencils $(M,A)$ with $n=200$, $p = 150$, $m = 50$,  $M=\emph{\texttt{gallery}}(\emph{'\texttt{lehmer}'},n),\, A =\diag(1,\ldots,p,-m,\ldots,-1)$,  $\emph{\texttt{rstop}} = 1\e-9$ for the RGD method and $\texttt{rstop}_{ALM} = 1\e-7$ and $\texttt{feastol} = 1\e-10$ for the ALM method.\label{tab:tracemin_500}}
	\footnotesize
	\begin{tabular}{c l c c c c r r r}
		%	& & & & & & &\\%[-3mm]
		\toprule
	Meth.& retr.&obj.&grad.&eig. rel. err.&feas.&$\#$iter.& $\#$eval.& CPU \\ \midrule 
		&\multicolumn{8}{c}{$k = 5$, $k_p = 3$, $k_m= 2$, $\Mx = I_n$}\\
		&\eqref{eq:Cayley_curve}&$2.244\e-4$ &$3.823\e-9$ &$2.164\e-6$ &$2\e-12$& 13932& 15016& 16.7 \\
		RGD&\eqref{eq:Rcay2}&$2.244\e-4$&$2.134\e-9$ &$1.208\e-6$&$9\e-14$& 12463 &13510 &3.3 \\
		&\eqref{eq:Cayley_econ}&$2.244\e-4$& $4.458\e-9$ &$2.524\e-6$&$1\e-12$ & 10824& 11657& 2.3 \\
		ALM & &$2.294\e-4$ & $2.021\e-5$ & $1.135\e-2$ & $3\e-06$ & $4$ &$20695$& $1.1$\\
		\midrule 
		&\multicolumn{8}{c}{$k = 20$, $k_p = 15$, $k_m= 5$, $\Mx = I_n$}\\
		&\eqref{eq:Cayley_curve}&$9.084\e-4$ &$7.881\e-9$ &$2.067\e-6$&$5\e-12$& 17122& 18493 &30.8 \\
		RGD&\eqref{eq:Rcay2}&$9.084\e-4$& $7.618\e-9$ &$1.997\e-6$& $7\e-13$& 17649& 19072 &14.7 \\
		&\eqref{eq:Cayley_econ}&$9.084\e-4$& $7.920\e-9$ &$2.078\e-6$&$7\e-12$ &16248& 17517& 11.6 \\
		ALM & & $9.130\e-4$ & $5.848\e-5$ & $1.510\e-2$ & $1\e-09$ & $10$ & $51020$ & $7.1$\\
		\midrule 
		&\multicolumn{8}{c}{$k = 5$, $k_p = 3$, $k_m= 2$, $\Mx = M$}\\
		&\eqref{eq:Cayley_curve}& $2.244\e-4$ &$8.405\e-10$& $8.207\e-8$ &$9\e-15$& 92& 93&0.3 \\
		RGD&\eqref{eq:Rcay2}&$2.244\e-4$& $1.258\e-9$& $4.397\e-8$& $2\e-13$& 103& 104& 0.3\\
		&\eqref{eq:Cayley_econ}&$2.244\e-4$ &$1.211\e-9$& $1.215\e-7$& $2\e-13$& 97&98& 0.2\\
		\midrule
		%%%%%%%%%%%%%%%%
		&\multicolumn{8}{c}{$k = 20$, $k_p = 15$, $k_m= 5$, $\Mx = M$}\\
		&\eqref{eq:Cayley_curve}& $9.084\e-4$& $1.173\e-9$& $3.676\e-8$ & $2\e-14$& 109 & 110 & 0.5 \\
		RGD&\eqref{eq:Rcay2}& $9.084\e-4$& $1.317\e-9$ & $5.851\e-9$& $1\e-12$& 127& 128& 0.3 \\
	&	\eqref{eq:Cayley_econ}& $9.084\e-4$& $9.063\e-10$& $1.350\e-8$& $1\e-12$ & 121& 122&0.3 \\
		\bottomrule
	\end{tabular}
\end{table}

We continue with this model to check the scalability of the proposed algorithm. To this end, we let $n=200, 400,\ldots,6400$, $p = 3n/4, m = n/4$, while we fix $k=10, k_p=k_m=5$. In Table~\ref{tab:scalability}, we report some measures versus the problem size. One can observe that while the gradient norm and the feasibility violation are quite stable, the number of iterations as well as the function evaluations increase predictably. Specifically, the CPU time indicates that the computational complexity of Algorithm~\ref{alg:non-monotone gradient} is approximately $O(n^3)$, which basically matches the estimate given in \eqref{eq:roughcost}.
\begin{table}[h!]
	\centering
	\caption{Scalability check of Algorithm~\ref{alg:non-monotone gradient} using formula \eqref{eq:Cayley_econ} with the trace minimization for $n\times n$ positive-definite matrix pencils $(M,A)$ where $M=\emph{\texttt{gallery}}(\emph{'\texttt{lehmer}'},n)$, $ A =\diag(1,\ldots,p,-m,\ldots,-1)$, $J = \diag(I_5,-I_5)$ with $p = 3n/4$, $m = n/4$,  $\emph{\texttt{rstop}} = 1\e-8$.\label{tab:scalability}}
	\footnotesize
	\begin{tabular}{l c c c c c c}
		%	& & & & & & &\\%[-3mm]
		\toprule
		$n$ & 200 & 400 & 800 & 1600 & 3200 & 6400 \\ \midrule 
		grad.& $1.3\e-8$ & $1.1\e-8$ & $1.1\e-8$ & $1.1\e-8$ & $6.7\e-9$ & $1.2\e-8$ \\
		eig. rel. err.& $3.2\e-7$ & $5.7\e-7$ & $1.9\e-7$ & $2.7\e-7$ & $2.6\e-7$ & $2.5\e-2$ \\
		feas.& $1\e-11$ & $5\e-12$ & $1\e-10$ & $2\e-12$ & $1\e-11$ & $6\e-12$ \\
		$\#$iter.& 108 & 120 & 192 & 201 & 233 &251\\
		$\#$eval. & 109 & 121 & 194 & 205 & 248 & 262\\
		CPU & 0.30 & 0.87 & 5.24 & 28.19 & 201.98 & 1376.28\\
		\bottomrule
	\end{tabular}
\end{table}

Next, we numerically examine the constant step size strategy for the proposed algorithm using the same model problem with $n = 5000$ and $\texttt{rstop} = 1\e-6$. For unconstrained optimization of quadratic functions, it was confirmed in \cite[chap. 1]{Bertsekas1999} that a convergence to a solution is guaranteed for the gradient descent method if the fixed step size is smaller than $2/\lambda_{\max}$, where $\lambda_{\max}$ is the largest eigenvalue of the Hessian. Our situation is a bit more involved, but this hint can provide a range for the tests. Since the recommended upper bound in our cases is approximately $7\e-4$, we run the RGD method with fixed step sizes, namely, $5\e-6, 1\e-5, 5\e-5, 1\e-4, 5\e-4, 1\e-3, 5\e-3$ and compare the results with our Algorithm~\ref{alg:non-monotone gradient}. As expected, only Algorithm~\ref{alg:non-monotone gradient} delivers a convergence after 242 iterations. We report the running history in Figure~\ref{fig:fixstep}. One can see that, except for a slight loss of feasibility and about 1.5 times slower per iteration, the non-monotone line search strategy in  Algorithm~\ref{alg:non-monotone gradient} is still much more efficient than a fixed step size in reaching a solution.

\begin{figure}[t]
	\centering
	\includegraphics[width=\textwidth]{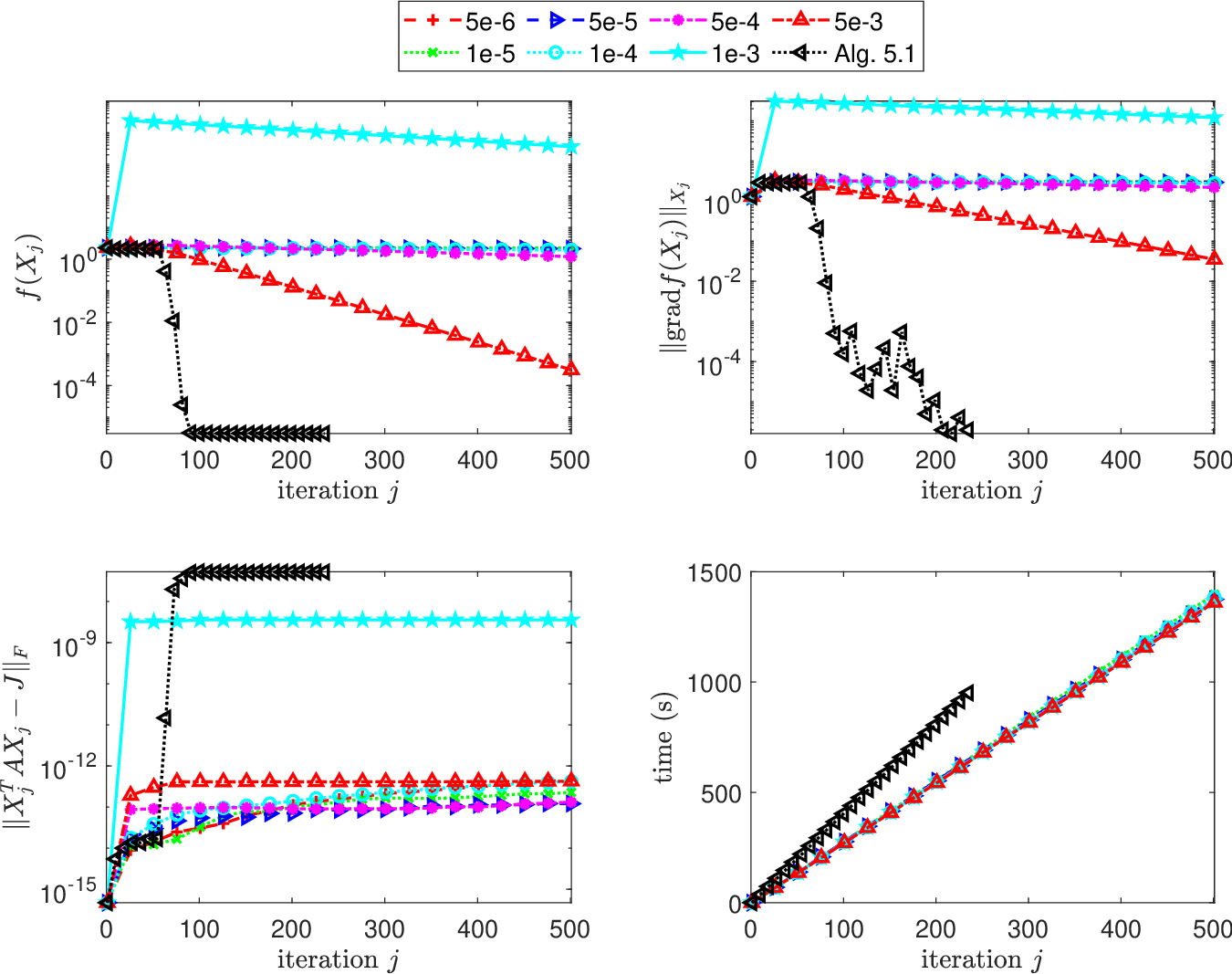}
	{\caption{Comparison of the step size strategy used in Algorithm~\ref{alg:non-monotone gradient} with different fixed step sizes (specified on the top) on the model problem described in Table~\ref{tab:scalability} with $n=5000$ and $\texttt{rstop} = 1\e-6$.}	\label{fig:fixstep}}
\end{figure}

Now, we proceed with different positive-definite matrices in the MATLAB gallery with a similar setting. Namely, we consider the cost function  \eqref{eq:opt_prob} determined by different matrices $M$ with $(n,p,m) = (2000,1000,1000)$ and $(k,k_p,k_m)=(10,5,5)$, $A = \diag(1,\ldots,p,-1,\ldots,-m)$ and $\texttt{rstop}=1\e-9$. For ALM, we set $\texttt{rstop}_{ALM} = 1\e-7$, $\texttt{feastol} = 1\e-8$, with maximal 20 and 1000 iterations for the main and inner loops, respectively.  
More details and the obtained results are described in Table~\ref{tab:tracemin_2000}. %With larger size, 
One can easily see that the Cayley retraction formulae \eqref{eq:Rcay2} and \eqref{eq:Cayley_econ} are faster in yielding the solution, while the conventional one \eqref{eq:Cayley_curve} is always the best in preserving the feasibility. Compared to RGD, the ALM method delivers less acurate result, especially the gradient norm and the relative error in eigenvalue computation, although it consumes more time. Particularly, it does not yield a solution in the case of the $\texttt{tridiag}$ matrix.

\begin{table}[h!]
	\centering
	\caption{Trace minimization tests for positive-definite matrix pencils $(M,A)$ with $n=2000$, $p = 1000$, $m = 1000$, $k = 10$, $k_p = 5$, $k_m= 5$, $\Mx = M$, $A =\diag(1,\ldots,p,-1,\ldots,-m)$,  $\emph{\texttt{rstop}} = 1\e-9$ for the RGD method and $\texttt{rstop}_{ALM} = 1\e-7$ and $\texttt{feastol} = 1\e-8$ for the ALM method.\label{tab:tracemin_2000}}
	\footnotesize
	\begin{tabular}{c l c c c c r r r}
		\toprule
		Meth.&retr.&obj.&grad.&eig. rel.  err.&feas.&$\#$iter.& $\#$eval.& CPU \\ \midrule 
		&\multicolumn{8}{c}{$M=\texttt{gallery}(\texttt{'lehmer'},n)$}\\
		&\eqref{eq:Cayley_curve}&$3.940\e-6$ &$2.717\e-9$ &$7.282\e-7$ &$2\e-14$& 166 & 174 & 90.7 \\
		RGD&\eqref{eq:Rcay2}& $3.940\e-6$ & $1.146\e-9$ & $7.228\e-5$& $2\e-12$& 163 &171 & 40.3 \\
		&\eqref{eq:Cayley_econ}&$3.940\e-6$&$2.920\e-9$ &$2.392\e-6$&$3\e-09$ & 165 & 173 & 41.2 \\
		ALM&&$2.048\e-5$ & $6.443\e-3$ & $9.023\e-2$ & $7\e-16$ & $20$ & $21163$ & $257.5$\\
		\midrule 
		%%%%%%%%%%%%%%%%
		&\multicolumn{8}{c}{$M$ = \texttt{gallery}('\texttt{gcdmat}',$\,n$)}\\
		&\eqref{eq:Cayley_curve}& $5.223\e-0$ &$1.934\e-8$& $2.721\e-8$ &$8\e-14$&513&530& 262.4 \\
		RGD&\eqref{eq:Rcay2}& $5.223\e-0$ & $3.595\e-8$ &$1.327\e-8$& $5\e-11$ & 559& 564 &126.5\\
		&\eqref{eq:Cayley_econ}& $5.223\e-0$ &$2.868\e-8$& $5.681\e-11$ &$5\e-11$& 633& 666& 144.4\\
		ALM& & $5.228\e-0$ & $8.118\e-2$ &$3.292\e-4$ &$9\e-12$ & 20 &19499 &232.7\\
		\midrule
		&\multicolumn{8}{c}{$M$ = \texttt{gallery}('\texttt{mohler}',$\, n,\ 0.5$)}\\
		&\eqref{eq:Cayley_curve}& $5.716\e-3$ &$2.130\e-8$ &$1.531\e-6$ & $2\e-14$& 260 & 296& 137.1 \\
		RGD&\eqref{eq:Rcay2}& $5.716\e-3$& $2.393\e-8$ & $7.323\e-5$& $7\e-12$& 328& 450 & 75.3 \\
		&\eqref{eq:Cayley_econ}& $5.716\e-3$& $2.337\e-8$& $9.779\e-6$& $5\e-12$& 248& 291& 56.9\\ 
		ALM& &$6.322\e-3$ & $1.613\e-2$ & $1.146\e-1$ &$1\e-13$ & 20 & 22311 &266.1\\
		\midrule
		%%%%%%%%%%%%%%%%%%%%%%%%%%%%%%%%%%%%%%%%
		&\multicolumn{8}{c}{$M$ = \texttt{gallery}('\texttt{minij}', $n$)}\\
		&\eqref{eq:Cayley_curve}& $2.585\e-3$ &$8.918\e-8$ &$1.338\e-6$ & $3\e-14$& 203 & 237 & 111.1 \\
		RGD&\eqref{eq:Rcay2}& $2.585\e-3$& $4.215\e-8$ & $2.482\e-6$& $2\e-08$& 199& 225 & 48.4 \\
		&\eqref{eq:Cayley_econ}& $2.585\e-3$& $7.641\e-8$ & $2.656\e-6$& $3\e-10$& 202& 233& 48.4 \\ 
		ALM& &$4.033\e-3$ &$4.164\e-2$ &$4.368\e-1$ &$3\e-13$ &20 &21725 &261.2\\
		\midrule
		&\multicolumn{8}{c}{$M$ = \texttt{gallery}('\texttt{tridiag}', $n$)}\\
		&\eqref{eq:Cayley_curve}& $2.039\e-6$ &$6.377\e-10$ &$2.703\e-5$ & $1\e-14$& 39 & 54& 12.8 \\
		RGD&\eqref{eq:Rcay2}& $2.039\e-6$& $6.940\e-10$ & $2.702\e-5$& $4\e-13$& 39& 54 & 0.1 \\
		&\eqref{eq:Cayley_econ}& $2.039\e-6$& $6.604\e-10$ & $2.702\e-5$& $4\e-13$& 39& 54& 0.1 \\ 
		ALM& \multicolumn{8}{c}{failed}\\
		\bottomrule
	\end{tabular}
\end{table}

Now, we move our attention to the LREVP in terms of the minimization problem  \eqref{eq:IndefTraceMin}. Here, we use data of larger sizes, stemming from practice. In the first test, following \cite[sect. 6]{BaiL14}, we make use of the University of Florida sparse matrix collection \cite{DavisHu2011}. Namely, the matrix \texttt{bcsstk21}, which is of size $3600\times 3600$, and the leading principal submatrix of the same size of the matrix \texttt{sts4098} are assigned to $K$ and $M$, respectively. The block structure makes the size of the problem 7200. We aim at computing the smallest four positive eigenvalues, and therefore the matrix $J$ is chosen as $I_4$. For the RGD method, parameter setting is as before, while an initial guess $X_0$ can be simply chosen as $\left[\begin{smallmatrix}
		\tilde{V}\\\tilde{V}
	\end{smallmatrix}\right]$, where $\tilde{V}$ is any $n\times k$ random matrix with orthonormal columns %devided 
	divided by $\sqrt{2}$. In our test, we start with an $n\times k$ random matrix, preset by \texttt{rng}(1,'\texttt{philox}'), and then %orthogonalizing 
	orthogonalize the obtained matrix. With $\texttt{rstop} = 1\e-9$, the convergence is reached after 45 iterations. The condition $\texttt{rstop} = 1\e-11$ is fulfilled at the 108-th iteration. %More details are given in Table~\ref{tab:Florida}
	 Meanwhile, after trying various sets of paramters and initial guesses,  we observe that ALM helps reduce the norm of the associated Lagrangian function very fast at the beginning and usually stagnates when it reaches $1\e+1$, which is too far from yielding a reliable approximation for the eigenvalues. Therefore, in Figure~\ref{fig:Florida} and Table~\ref{tab:Florida}, we report the results obtained using the proposed RGD only. One can see that, with the tolerance $\texttt{rstop}=1\e-11$, our method can deliver a comparably accurate solution as in \cite{BaiL14}\footnote{Note that \cite{BaiL14} considered the generalized LREVP including the LREVP as a special case.}

\begin{table}[h!]
	\centering
	\caption{A LREVP in terms of the trace minimization for the positive-definite matrix pencils $(Q,A)$ with $Q = \diag(K,M)$, $K, M \in \SPD(p)$ derived from the University of Florida sparse matrix collection, $A =\left[\begin{smallmatrix}
			0&I_p\\I_p&0
		\end{smallmatrix}\right]$ with $p = 3600$, $J=I_k$ with $k = 4$, and $\Mx = Q$,.\label{tab:Florida}}
	\footnotesize
	\begin{tabular}{c c c c c c r r r}
		%	& & & & & & &\\%[-3mm]
		\toprule
		Retr.&\texttt{rstop}.&obj.&grad.&eig. rel. err.& feas.&$\#$iter.& $\#$eval.& CPU \\ \midrule 
		\eqref{eq:Cayley_curve}&$1\e-09$&123.9778&$1.3520\e-5$ &$7.766\e-07$ &$4\e-15$ &45 &46 &393.43\\
		\eqref{eq:Rcay2}&$1\e-09$&123.9778 &$1.3537\e-5$ &$7.766\e-07$& $8\e-14$&45 & 46&2.91\\
		\eqref{eq:Cayley_econ}&$1\e-09$&123.9778&$1.3540\e-5$&$7.766\e-07$ &$6\e-14$& 45 & 46 & 2.32 \\
		\eqref{eq:Cayley_econ}&$1\e-11$ & 123.9778 & $6.1818\e-8$ & $8.135\e-13$& $1\e-13$& 108 & 109 & 5.39 \\
		\bottomrule
	\end{tabular}
\end{table}

\begin{figure}[t]
	\centering
	\includegraphics[width=\textwidth]{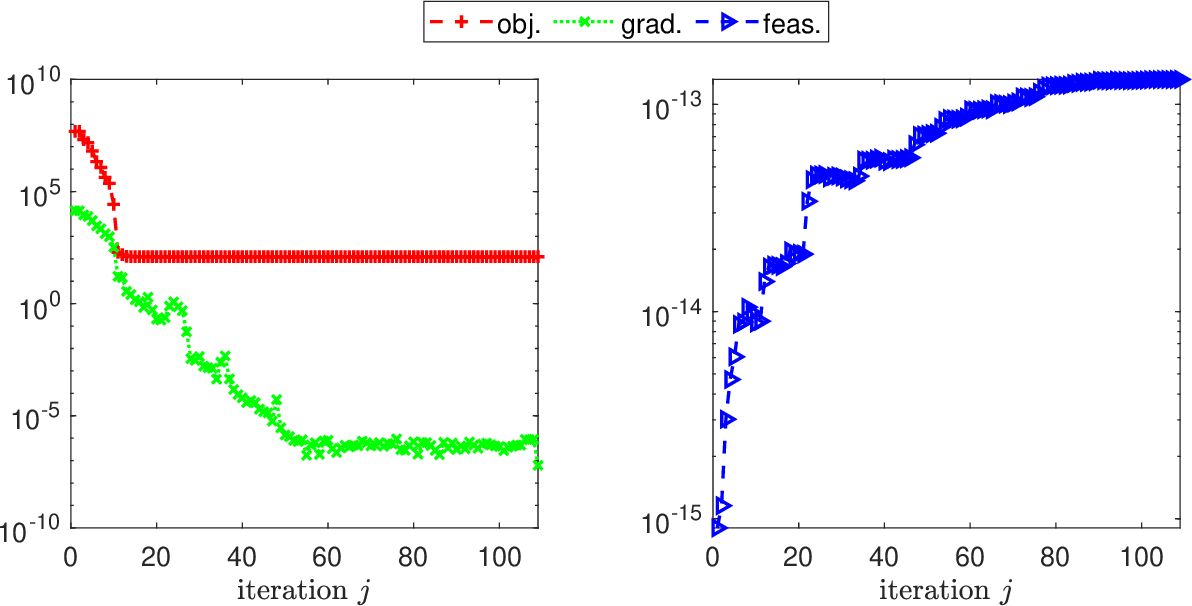}
	{\caption{Convergence history of the LREVP with data specified in Table~\ref{tab:Florida}.}	\label{fig:Florida}}
\end{figure}

Another model for the LREVP is considered next where the matrices $K$ and $M$ are  derived from electronic structure calculations, which were also used in \cite{KresPS2014,Pand2019,BaiL13}. 
Here, for the problem of size $n=11320$, the stopping tolerance \texttt{rstop} is set as $1\e-6$ 
while other parameters are the same as before.  
Results are given in Table~\ref{tab:traceminKM}. Note however that our use of this example is merely as a model with practical data; the eigenvalue computation scheme proposed in \cite{KresPS2014} is apparently more efficient. Indeed, a test using the retraction \eqref{eq:Cayley_curve} with \texttt{rstop} $=1\e-8$ delivers four approximate eigenvalues that can match about the first ten digits of the reference eigenvalues
\begin{align*}
	&\lambda^+_1 \approx 0.541812517132466,\quad \lambda^+_2 \approx 0.541812517132473,\\
	&\lambda^+_3 \approx 0.541812517132498,\quad \lambda^+_4 \approx 0.615143209274579,
\end{align*}
which is a bit worse than the approximation reported in \cite[Fig. 1]{KresPS2014}.
\begin{table}[h!]
	\centering
	\caption{A LREVP via trace minimization for the positive-definite matrix pencil $(Q,A)$ with $Q = \diag(K,M)$, $K, M \in \SPD(p)$ from the electronic structure calculation, $A =\left[\begin{smallmatrix}
			0&I_p\\I_p&0
		\end{smallmatrix}\right]$ with $p = 5560$, $J=I_k$ with $k =4$, $\Mx = Q$, $\emph{\texttt{rstop}} = 1\e-6$.\label{tab:traceminKM}}
	\footnotesize
	\begin{tabular}{l c c c c c r}
		%	& & & & & & &\\%[-3mm]
		\toprule
		Retr.&obj.&grad.&feas.&$\#$iter.& $\#$eval.& CPU \\ \midrule 
		%\multicolumn{8}{c}{$M=\texttt{gallery}(\texttt{lehmer},n)$}\\
		\eqref{eq:Cayley_curve}&2.240580760678145&$1.820\e-6$ &$1\e-14$& 89 & 90 & 3679 \\
		\eqref{eq:Rcay2}& 2.240580760754902 & $1.820\e-6$ & $3\e-09$& 89 &90 & 1170 \\
		\eqref{eq:Cayley_econ}&2.240580760754425&$1.821\e-6$ & $3\e-09$ & 89 & 90 & 1201  \\
		\bottomrule
	\end{tabular}
\end{table}

\subsection{Matrix least square problems}\label{ssec:Example_MatEq}
We consider minimization of the cost function of the form
\begin{equation}\label{eq:fitting}
	f(X) = \|GX-B\|^2_F.
\end{equation}
This form is an approach for various problems. First, with suitable specification, %minimization of a cost function of the form \eqref{eq:fitting} 
it is the well known Procrustes problem \cite{Scho1966,High1988,AndeE1997,BojaL1999,BhatC2019}. In this subsection, to intimate the conventional situation \cite{Scho1966}, we examine the problem on the $J$-orthogonal group. That is, given $G, B\in \Rbln,\, l\geq n$, find a matrix $X\in \iSt_{J,J}$ with $J = \diag(\pm 1,\ldots,\pm 1) \in \Rbnn$ that minimizes $f$ defined in \eqref{eq:fitting}. It is worth noting that in a recent work \cite{HeYWX2024}, a block coordinate descent method was developed. To set up, let us consider $J = (I_p,-I_m)$ with $p+m=n$. Then $G$ is randomly generated with \texttt{rng default} in MATLAB. A global solution 
is prescribed as %$V^TJV$ with 
$V = \diag(V_1,V_2),$ %$V = \diag(V_1,,V_2)$ 
where $V_1\in \O(p), V_2\in \O(m)$ are  initialized randomly with \texttt{rng}(1,'\texttt{twister}') and then orthogonalized by \texttt{orth}. This results in the setting that $B = GV$. The identity matrix $I_n$ is chosen as the initial guess and $\texttt{rstop} = 1\e-6$ for the RGD method; naturally, only the Cayley retraction \eqref{eq:Cayley_curve} is used. For the ALM method, we set $\texttt{rstop}_{ALM} = 1\e-9$ and $\texttt{feastol} = 1\e-12$ while the maximal number of the main and the subproblem iterations are 20 and 100, respectively. Table~\ref{tab:Procrustes} displays the results for different sizes of problem with $p = 3n/4$ in which diff. represents the difference between the prescribed and the numerically obtained solutions. Apparently, the iterate generated by the RGD method converges to the prescribed global minimizer only in the cases $l=n=1000$ and $(l,n)=(1500,1000)$; this is normal since the problem can have more than one global solution and/or the obtained solution might not be a global one. Overall, the generated iterate converges as expected. Surprisingly, the ALM method offers better results in the case of $(l,n)=(1000,500)$.
\begin{table}[h!]
	\centering
	\caption{Procrustes problem $\|GX - B\|_F^2$ on the $J$-orthogonal group tests with $\Mx = G^TG$, $J = \diag(I_p,-I_m)$,   $\emph{\texttt{rstop}} = 1\e-6$ for the RGD method and $\texttt{rstop}_{ALM} = 1\e-9$ and $\texttt{feastol} = 1\e-12$ for the ALM method.\label{tab:Procrustes}}
	\footnotesize
	\begin{tabular}{c c c c c r r r}
		\toprule
		Meth. & obj.&grad.&diff.&feas.&$\#$iter.& $\#$eval.& CPU \\  \midrule 
		\multicolumn{8}{c}{$l=500, n = 500$}\\
		RGD&$1.2912\e+01$ &$6.9214\e-4$ &$4.1199\e+0$ &$1\e-13$& 55 & 57 & 58.2 \\
		ALM & $3.6771\e-06$ &$3.8351\e-3$ & $7.5915\e-3$& $9\e-12$& 20& 2435&106.2\\
		\midrule 
		\multicolumn{8}{c}{$l=1000, n = 500$}\\
		RGD&$1.1373\e+02$ &$8.5525\e-4$ &$2.2721\e+0$ &$1\e-13$& 66 & 67 & 73.2 \\
		ALM & $5.0538\e-12$ &$9.8152\e-8$ & $7.7892\e-7$& $1\e-06$& 5& 384&19.2\\
		\midrule
		\multicolumn{8}{c}{$l=1000, n = 1000$}\\
		RGD&$1.4532\e-9$ &$7.6241\e-5$ &$1.1278\e-6$ &$1\e-13$& 18 & 20 & 157.4 \\
		ALM & $1.4499\e-06$ &$2.4082\e-3$ & $3.1459\e-3$& $1\e-12$& 20& 2523&717.4\\
		\midrule
		\multicolumn{8}{c}{$l=1500, n = 1000$}\\
		RGD&$4.0584\e-11$ &$1.2741\e-5$ &$1.1278\e-6$ &$1\e-13$& 18 & 19 & 158.8 \\
		ALM & $2.2747\e-10$ &$1.9010\e-6$ & $5.6926\e-6$& $9\e-10$& 20& 2523&189.3\\
\bottomrule
\end{tabular}
\end{table}

As the second application, we consider a matrix equation
\begin{equation}\label{eq:mateq}
GX = B, G\in \SPD(n),
\end{equation}
on the indefinite Stiefel manifold $\iSt_{A,I_k}$ with nonsingular $A\in \symset(n)$. If there is no information about the existence of a solution, a natural idea is to minimize the associated cost function of the form \eqref{eq:fitting}. One can see that if $B \in \iSt_{G^{-1}AG^{-1},I_k}$ then \eqref{eq:mateq} admits a unique solution $G^{-1}B$ which is also the unique global solution of the associated optimization problem. %In order to serve as a model for our optimization algorithm, we approach this equation via minimizing  the cost function
%\begin{equation}%\label{eq:fitting}
%f(X) = \|GX-B\|^2_F.
%\end{equation}
When $G = I_n$, it is the nearest matrix problem considered in \cite{GSAS21}; some minimal distance and averaging problems can also be converted into this form, see, e.g., \cite{FioriT2009,Fior11}. 

In the first scenario, most of the numerical data are fixed as follows. Let $(n,p,m) = (4000,3000,1000)$ and $k=10$. First, let %letting
MATLAB generate an $n\times n$ random matrix with default setting and then this matrix is orthogonalized to yield $V$ whose columns are denoted by $v_j,j=1,\ldots,n$. Next, we set $A = V\diag(1,\ldots,p,-m,\ldots,-1)V^T$. The right hand side is set as % indxB = 1:k; B = M*V(:,indxB)*diag(1./sqrt(indxB)); 
$B = G[v_1,\ldots,v_k]\diag(1/\sqrt{1},\ldots,1/\sqrt{k})$.  A set of $k$ orthogonal eigenvectors corresponding to $k$ positive eigenvalues of $A$ %, %correspondingly devided
divided by the square root of the associated eigenvalue is %can be 
used as an initial guess $X_0.$  Table~\ref{tab:matrixeq_4000} reports the result of running different matrices $G$ with the initial guess to be the scaled orthogonal eigenvectors of $A$ associated with $k$ eigenvalues of the largest magnitudes obtained by \texttt{eigs} in MATLAB. The same quantities as in the previous examples are displayed. 

We try various sets of parameter values and initial guesses with the ALM method, but the obtained result is unsatisfactory and therefore not reported here.

\begin{table}[h!]
	\centering
	\caption{Matrix equation $GX = B$ tests with $n=4000$, $p = 3000$, $m = 1000$, $k = 10$, $\Mx = G^TG$, $A =\emph{\texttt{diag}}(1,\ldots,p,-m,\ldots,-1)$,  $\emph{\texttt{rstop}} = 1\e-9$.\label{tab:matrixeq_4000}}
	\footnotesize
	\begin{tabular}{c c c c c r r r}
		\toprule
		Retr.&obj.&grad.&diff.&feas.&$\#$iter.& $\#$eval.& CPU \\ \midrule 
		\multicolumn{8}{c}{$G=\texttt{gallery}(\texttt{lehmer},n)$}\\
		\eqref{eq:Cayley_curve}&$1.333\e-12$ &$2.309\e-6$ &$7.660\e-8$ &$1\e-13$& 15 & 16 & 129.4 \\
		\eqref{eq:Rcay2}&  $1.086\e-12$ & $2.084\e-6$& $8.397\e-8$& $1\e-14$& 15 &16 & 30.5 \\
		\eqref{eq:Cayley_econ}&$2.172\e-12$ &$2.948\e-6$& $8.410\e-8$& $1\e-14$ & 15 & 16 & 30.4 \\
		\midrule
			\multicolumn{8}{c}{$G=\texttt{gallery}(\texttt{kms},n,0.5)$}\\
		\eqref{eq:Cayley_curve}&$1.596\e-22$ &$2.527\e-11$ &$2.164\e-11$ &$1\e-13$& 13 & 14 & 148.0 \\
		\eqref{eq:Rcay2}&  $1.596\e-22$ & $2.527\e-11$& $2.164\e-11$& $1\e-14$& 13 &14 & 67.4 \\
		\eqref{eq:Cayley_econ}&$1.596\e-22$ &$2.527\e-11$& $2.164\e-11$& $1\e-14$ & 13 & 14 & 61.7 \\
		\midrule
		%%%%%%%%%%%%%%%%%%%%%%%%%%%%%%%%%%%%
		\multicolumn{8}{c}{$G=\texttt{gallery}(\texttt{minij},n)$}\\
		\eqref{eq:Cayley_curve}&$4.717\e-7$ &$1.374\e-3$ &$5.409\e-9$ &$1\e-13$& 111 & 149 & 967.1 \\
		\eqref{eq:Rcay2}&  $7.224\e-9$ & $1.700\e-4$& $3.069\e-9$& $1\e-13$& 123 &163 & 236.4 \\
		\eqref{eq:Cayley_econ}&$9.846\e-8$ & $6.275\e-4$& $4.456\e-9$& $1\e-14$ & 113 & 154 & 216.3 \\
		\bottomrule
	\end{tabular}
\end{table}

In order to practically verify the global convergence of Algorithm~\ref{alg:non-monotone gradient}, we reuse the above model setting with random data. Namely, $V$ is %ramdonly 
randomly generated without any prescribed generator; $X_0$ is the set of $k$ randomly picked columns among the first $p$ columns of $V$ using \texttt{randsample}$(p,k)$ in MATLAB. Then, for each model matrix, we run ten tests. The average of quantities obtained from these tests are given in Table~\ref{tab:matrixeq_4000_random}. Apparently, the %\dvt{iteration} 
iterate 
generated by the proposed algorithm always converges to the global minimizer of the cost function which is also the solution to the matrix equation. 
\begin{table}[h!]
	\centering
	\caption{The average values of ten tests with random data and the  retraction %formular
		formula \eqref{eq:Cayley_econ} for matrix equation $GX = B$, %tests with 
		and $n=4000$, $p = 3000$, $m = 1000$, $k = 10$, $\Mx = G^TG$, $A =\emph{\texttt{diag}}(1,\ldots,p,-m,\ldots,-1)$,  $\emph{\texttt{rstop}} = 1\e-9$.\label{tab:matrixeq_4000_random}}%: average values of ten tests for each model.\label{tab:matrixeq_4000_random}}
	\footnotesize
	\begin{tabular}{l c c c c r r r}
		\toprule
		Matr $G$ &obj.&grad.&err.&feas.&$\#$iter.& $\#$eval.& CPU \\ \midrule 
		\texttt{lehmer}&$2.856\e-14$ &$2.687\e-7$ &$4.627\e-09$ &$1\e-13$& 15.9 & 16.9 & 38.0 \\
		\texttt{kms}&$1.090\e-18$ &$1.249\e-9$ &$9.963\e-10$ &$4\e-14$& 13.3 & 14.3 & 68.5\\
		 \texttt{minij}&$3.146\e-07$ &$9.162\e-4$ &$5.676\e-09$ &$2\e-13$& 113.4 & 153.6 & 224.2\\
		\bottomrule
	\end{tabular}
\end{table}

From the examples presented in both two subsections~\ref{ssec:Example_Tracemin} and \ref{ssec:Example_MatEq}, it can be concluded that (i) the algorithm with all of its components proposed for minimization on the indefinite Stiefel manifold works as expected: it converges to a solution regardless of the initial guess. In most cases, it apparently delivers better results than that obtained by the classical augmented Lagrangian method for the equality constrained problem. Moreover, (ii) an appropriately chosen metric, enabled by the consideration of a tractable metric, always results in a faster solution: fewer iterations and CPU time. And finally, (iii) the three formulae for the Cayley retraction all work properly: the conventional one \eqref{eq:Cayley_curve} is suitable for large $k$, e.g., the indefinite orthogonal group, while the two others \eqref{eq:Rcay2} and \eqref{eq:Cayley_econ} are advisable for small $k$ as they help %replacing
replace inversion of an $n\times n$ matrix by that of a $k$-sized matrix. %\dvtcomm{What is a $k$-scaled matrix?} 
Using these two schemes %especially the latter, 
can sometimes result in a slight loss of feasibility. This is most probably due to the occasional instability of Sherman-Morrison-Woodbury formula as reported in, e.g., \cite{FineS2001,HaoS2021}; it can be restored using a suitable Gram-Schmidt algorithm, upon the individual case.

\section{Conclusion}
We addressed %solved 
the minimization problem subject to a quadratic constraint of the form $X^TAX = J$ with symmetric, nonsingular matrix $A$ and  the matrix $J$ satisfying $J^2 = I$, %\dvt{a given matrix $J$ such that} $J^2=I_k$ and $A$ is %an
%\dvt{a given} $n\times n, k\leq n$, symmetric, nonsingular matrix, 
in the Riemannian framework. %We showed that the constraint set is indeed an imbedded manifold, constructed geometric objects with respect to a tractable metric and proposed a retraction based on the Cayley transform.
%We then suggested a nonmonotone gradient descent algorithm employing the derived material whose global convergence is guaranteed. Various numerical examples verified the effciency and robustness of our method.
Our result provided a unified framework for Riemannian optimization on the orthogonal and generalized Stiefel manifolds, $J$-orthogonal group, and more than that. 

The numerical examples showed that, on the one hand, the proposed Riemannian gradient descent scheme is a highly capable method, especially for the case that an appropriate metric is used, but on the other hand, it may suffer from slow convergence. More robust algorithms such as conjugate gradient or an efficient way to exploit the second-order information of the cost function beyond the tractable metric might be considered in future work.

\section*{Acknowledgement}
The authors would like to thank Dr. Marija Miloloža Pandur for providing the data used in the last example of subsection~\ref{ssec:Example_Tracemin}.  
Dinh Van Tiep's research was supported by Thai Nguyen University of Technology.
\bibliographystyle{siamplain}
\bibliography{references_v2}
\end{document}